\documentclass[11pt,twoside]{amsart}

\usepackage[latin1]  {inputenc}%
\usepackage[T1]      {fontenc }%
\usepackage          {amsmath }%

\usepackage          {amsfonts}%
\usepackage          {amssymb }%
\usepackage          {amsthm  }%
\usepackage          {a4wide  }%
\usepackage          {url     }%
\usepackage          {tikz    }%
\usepackage[bookmarks=false,pdfborder={0 0 0.05}]{hyperref}
\usepackage[all]{xy}

\usepackage{enumerate, amsmath, amsfonts, amssymb, amsthm,  wasysym, graphics, graphicx, xcolor, frcursive,comment,bbm}

\usepackage{etex}
\usepackage{MnSymbol}

\usepackage{bm}

\definecolor{darkblue}{rgb}{0.0,0,0.7} 
\definecolor{darkred}{rgb}{0.7,0,0} 

\usepackage{hyperref}
\usepackage[all]{xy}
\usepackage[T1]{fontenc}

\usepackage{array}

\usepackage{float}
\restylefloat{table}

\def\defn#1{{\sf #1}}


\newcommand{\RR}{\mathbb R}
\newcommand{\ZZ}{\mathbb Z}

\newcommand{\spanz}{\mathrm{span}_{\mathbb{Z}}}
\newcommand{\spanr}{\mathrm{span}_{\mathbb{R}}}

\DeclareMathOperator{\Red}{Red}

\DeclareMathOperator{\idop}{id}

\DeclareMathOperator{\Redd(c)}{Red_T(c)^{gen}}

\DeclareMathOperator{\GL}{GL}
\DeclareMathOperator{\TR}{tr}

\DeclareMathOperator{\Aut}{Aut}

\DeclareMathOperator{\END}{End}

\DeclareMathOperator{\Z}{\mathbb{Z}}
\DeclareMathOperator{\PP}{\mathbb{P}}
\DeclareMathOperator{\NN}{\mathbb{N}}
\DeclareMathOperator{\TT}{\mathbb{T}}

\DeclareMathOperator{\XX}{\mathbb{X}}
\DeclareMathOperator{\DD}{\mathcal{D}}
\DeclareMathOperator{\COH}{coh}
\DeclareMathOperator{\MOD}{mod}

\DeclareMathOperator{\DB}{\mathcal{D}^b}
\DeclareMathOperator{\ARS}{\Phi_{aff}}
\DeclareMathOperator{\ARSS}{\Gamma_{aff}}

\DeclareMathOperator{\FRS}{\Phi_{fin}}
\DeclareMathOperator{\PFRS}{\Phi_{fin}^+}
\DeclareMathOperator{\VFIN}{V_{fin}}
\DeclareMathOperator{\VAFF}{V_{aff}}

\DeclareMathOperator{\HOM}{Hom}
\DeclareMathOperator{\EXT}{Ext}

\DeclareMathOperator{\Thi}{Thick}
\DeclareMathOperator{\cox}{cox}
\DeclareMathOperator{\Fix}{Fix}
\DeclareMathOperator{\fin}{fin}

\newtheorem{theorem}{Theorem}[section]
\newtheorem{corollary}[theorem]{Corollary}
\newtheorem{Proposition}[theorem]{Proposition}
\newtheorem{Lemma}[theorem]{Lemma}

\theoremstyle{definition}
\newtheorem{Definition}[theorem]{Definition}
\newtheorem{propdef}[theorem]{Proposition and Definition}
\newtheorem{remark}[theorem]{Remark}
\newtheorem{hyp}[theorem]{Hypothesis}
\newtheorem{example}[theorem]{Example}

\AtBeginDocument{%
   \def\MR#1{}
}

\title[Extended Weyl groups, Hurwitz transitivity and weighted projective lines I]{Extended Weyl groups, Hurwitz transitivity and weighted projective lines I: Generalities and the tubular case}

\author[B.~Baumeister]{Barbara Baumeister}
\address{Barbara Baumeister, Universit\"at Bielefeld, Germany}
\email{b.baumeister@math.uni-bielefeld.de}
\thanks{}

\author[P.~Wegener]{Patrick Wegener}
\address{Patrick Wegener, Leibniz Universit\"at Hannover, Germany}
\email{patrick.wegener@math.uni-hannover.de}

\author[S.~Yahiatene]{Sophiane Yahiatene}
\address{Sophiane Yahiatene, Universit\"at Bielefeld, Germany}
\email{syahiate@math.uni-bielefeld.de}

\subjclass[2010]{Primary 06B15, 05E10, 20F55, 05E18}
\keywords{category of coherent sheaves, Coxeter groups, extended Weyl groups, Hurwitz action, thick subcategories}

\date{\today}

\begin{document}
\newcolumntype{C}[1]{>{\centering\arraybackslash}m{#1}}

\begin{abstract}
We start the systematic study of extended Weyl groups, and continue the combinatorial description of thick subcategories in hereditary categories started by Ingalls-Thomas \cite{IT09}, Igusa-Schiffler-Thomas \cite{IS10} and Krause \cite{Kra12}. We show that for a weighted projective line $\XX$ there exists an order preserving bijection between the thick subcategories of $\COH(\mathbb{X})$ generated by an exceptional sequence and a subposet of the interval poset of a Coxeter transformation $c$ in the Weyl group of a simply-laced extended root system if the Hurwitz action is transitive on the reduced reflection factorizations of $c$ that generate the Weyl group. By using combinatorial and group theoretical tools we show that this assumption on the transitivity of the Hurwitz action is fulfilled for a weighted projective line $\mathbb{X}$ of tubular type. 
\end{abstract}
\maketitle

\tableofcontents

\section{Introduction}\label{Sec:Intro1}
This work is the first in a series of papers that study extended and elliptic root systems as well as their Weyl groups (introduced in Sections~\ref{sec:RootSystemDerivedCategory} and \ref{sec:Elliptic}, see \cite{Sai85, ST97, SY00}). They appear naturally in different contexts as for instance in singularity theory (see \cite{BWY21}) or in the theory of hereditary categories (see \cite{STW16, GL87, Hap01}). We will apply the obtained results to get a combinatorial description of thick subcategories in hereditary categories (see Theorem~\ref{thm:MainElliptic} and Theorems 1.2 and 1.4 in \cite{BWY21}), and will also apply them to singularity theory (see \cite{BWY21}).

We begin by introducing the notion of \defn{extended Coxeter-Dynkin diagrams}. We use it to define \defn{extended spaces}, that is, certain real vector spaces equipped with a basis and a symmetric bilinear form, as well as \defn{extended root systems} (Section~\ref{sec:Notation}). An \defn{extended Weyl system} $(W,S)$ is a group $W$ that is generated by a set of involutions $S$ related to the basis, called simple reflections. We will also make use of the fact that the group $W$ is generated as well by the entire set of reflections $T:= \bigcup_{w \in W} w^{-1} S w$. 

An extended Weyl group is equipped with a bilinear form, whose signature determines the structure of the group (see Section~\ref{subsec:ExtendedSpace}). Three different types are appearing, which we call \defn{tubular}, \defn{domestic} and \defn{wild}, respectively. If the type is tubular, then the group is \defn{elliptic} (see below), and if the type is domestic then there is a subset $Q$ of the set of all reflections $T$ of $W$ such that $(W,Q)$ is an affine Coxeter system (see \cite[Remark~2.5(c)]{BWY21}). If the type is wild, then $W$ is a split extension of a Coxeter group of indefinite type by a free abelian group of finite rank.

We define an analogue of a Coxeter element, the \defn{Coxeter transformation} $c$, and determine its reflection length $\ell_T(c)$, which is the length of a shortest $T$-word for $c$ (see Section~\ref{sec:reflength}).

\begin{Proposition}\label{lem:redCoxElt}
	Let $(W,S)$ be an extended Weyl system and $c$ a Coxeter transformation in $W$.  
	Then $\ell_T(c)$ equals  the rank $\mathrm{rk}(W) = |S|$ of the extended Weyl system.
\end{Proposition}

Then we use the length function $\ell_T$ on $(W,S)$ to introduce some interesting interval posets, which are further studied in \cite{BMN25}. They also play a major role in our application of the theory to representation theory of algebras, more precisely the combinatorial description of hereditary categories.

Hereditary categories are an important tool in the representation theory of algebras as they serve as prototypes for many phenomena appearing there (see for instance \cite{HTT07, Hap01, Rei98}).  According to Happel there are up to derived equivalence two types of connected hereditary ext-finite categories with tilting object over an algebraically closed field, namely the category $\MOD(A)$ of finitely generated $A$-modules for some finite dimensional hereditary $k$-algebra $A$ and the category $\COH(\XX)$ of coherent sheaves over a weighted projective line (see \cite{GL87}) over a field $k$ (\cite[Theorem 3.1]{Hap01}). Note that $\COH(\XX)$ is derived equivalent to the category of finitely generated modules over a canonical algebra (see \cite{RI84}). One way to study the structure of a hereditary category (resp. the bounded derived category of a hereditary category) is via its poset of thick subcategories ordered by inclusion (see for instance \cite{HK16}).

Motivated by Happel's theorem \cite{Hap01}, we transfer the results for the categories of finite dimensional modules to the category $\COH(\XX)$ in a series of two papers. As in \cite{STW16} we consider an algebraically closed field $k$ of characteristic zero. For a finite dimensional hereditary $k$-algebra $A$ the thick subcategories in $\MOD(A)$ that are generated by so called \defn{exceptional sequences} (see Section~\ref{subsec:NotMisProjLine}) have been described combinatorially. This has been first done in a seminal paper by Ingalls and Thomas \cite{IT09} for the case of a quiver algebra $A$. Subsequently, it was accomplished in full generality by Igusa, Schiffler and Thomas \cite{IS10} as well as Krause \cite{Kra12} and Hubery-Krause \cite{HK16}. The category $\MOD(A)$ is naturally equipped with a root system as well as a Weyl group $W = W(A)$, which is in fact a Coxeter group. Its set of simple reflections $S$ is obtained from a \defn{complete} exceptional sequence consisting of the complete set of representatives of the isomorphism classes of the simple $A$-modules. The product of the elements in $S$ in a suitable order is a Coxeter element $c$  of $(W,S)$, which is induced by a functor of the category, the so called Auslander-Reiten functor (see for instance \cite{HK16}).

The results of Ingalls-Thomas, Igusa-Schiffler-Thomas and Hubery-Krause \cite{HK16} provide an isomorphism between the poset of the thick subcategories of $\MOD(A)$   that are generated by exceptional sequences and a combinatorial object, the poset of \defn{non-crossing partitions} NC$(W,c)$. The latter poset  consists of all the elements of $W$ that are in the interval $[1,c]$ with respect to some order relation on $W$, the \defn{absolute order} (see Definition~\ref{def:PrefixPoset}). That isomorphism yields a combinatorial description of the poset of the thick subcategories of $\MOD(A)$ that are generated by exceptional sequences.

We associate a simply-laced extended root system $\Phi$ to the category $\COH(\XX)$ following the approach of \cite{STW16} (see Section~\ref{sec:RootSystemDerivedCategory}). An exceptional sequence in the bounded derived category of $\COH(\XX)$ is used to obtain a  set of roots, from which one can derive the extended root system $\Phi$. In particular, the group generated by the reflections in the hyperplanes othogonal to the elements in $\Phi$ is an extended Weyl group $W=W_{\Phi}$. Its set of simple reflections $S$ is  the set of reflections with respect to the roots induced by a fixed complete exceptional sequence (see Section~\ref{sec:root_star_quiver}). Notice that one distinguishes three different representation types for $\COH(\XX)$: it can be of \defn{domestic}, \defn{tubular} or of \defn{wild} type. These three types correspond to the three different types of extended Weyl groups.

There is an action, the so called \defn{Hurwitz action}, of the braid group $\mathcal{B}_n$ on the set of reduced reflection factorizations of a Coxeter transformation $c$, where $n = |S|$ (see Section \ref{subsec:Hurwitz}). If $(W,S)$ is a Coxeter system, then for every factorization of a Coxeter element into reflections, the set of involved reflections generates the whole group $W$. This is not necessarily true anymore for extended Weyl groups that are elliptic (see Example~\ref{Example:Non-generating}). Therefore in the study of $\COH(\XX)$ we have to replace the set of all reduced $T$-factorizations by the set 
$$
\Redd(c):=\{ (t_{1},\ldots,t_{n})\in T^{n} \mid  c = t_{1}\cdots t_{n}~\mbox{is reduced and generating} \}
$$
consisting of all the reduced reflection factorizations of the Coxeter transformation $c$ that are generating (see Definition~\ref{def:Generating}). We also need replacing the generalized non-crossing partitions, that is the \defn{interval poset} $[\idop,c]$, by the  subposet $[\idop, c]^{\text{gen}}$ of $[\idop, c]$ that consists of all the elements $w \in [\idop, c]$ that possess a factorization which can be extended to a reduced generating factorization of $c$ (see Section~\ref{sec:IntervalPoset}). In \cite{BWY21} it is shown that for $\COH(\XX)$ of domestic or wild type every reduced reflection factorization of the Coxeter transformation in the extended Weyl group is generating.

In this paper we prove the following theorem for the category $\COH(\XX)$ for all three types under two further assumptions on the set $\Redd(c)$ (see Section~\ref{sec:MainProof}). Both will turn out to be redundant.

\begin{theorem} \label{conj:WeightProjElliptic0}
	Let $\XX$ be a weighted projective line over the algebraically closed field $k$ of characteristic zero, $W$ the corresponding extended Weyl group with set of reflections $T$ and $c \in W$ a Coxeter transformation. Furthermore we assume that
	\begin{itemize}
		\item[(a)] the Hurwitz action is transitive on the set $\Redd(c)$;
		\item[(b)] if $(t_{1},\ldots,t_{n}),(r_{1},\ldots,r_{n}) \in \Redd(c)$ such that $t_{1}\cdots t_{k}=r_{1}\cdots r_{k}$ for some $1 \leq k \leq n$, then $(t_{1},\ldots,t_{k},r_{k+1},\ldots,r_{n}) \in \Redd(c)$.
	\end{itemize}
	Then there exists an order preserving bijection between
	\begin{itemize}
		\item the poset of thick subcategories of $\COH(\XX)$ that are generated by an exceptional sequence and ordered by inclusion; and 
		\item the subposet $[\idop, c]^{\mathrm{gen}}$ of the interval poset $[\idop, c]$.
	\end{itemize}
\end{theorem}

We will show in this paper that conditions (a) and (b) are (almost) redundant if $W$ is elliptic (see below). For the remaining types we  show (a)  in \cite{BWY21}, while (b) is obvious as for these types $[\idop, c]^{\mathrm{gen}} = [\idop, c]$.

Theorem~\ref{conj:WeightProjElliptic0}  itself will be proved in Section~\ref{sec:MainProof}. The bijection in the theorem sends the thick subcategory that  is generated by an exceptional sequence $(E_1, \ldots , E_r)$ to $s_{[E_1]} \cdots s_{[E_r]}$ where $s_{[E_i]}$ is the reflection associated to the exceptional object $E_i$. This implies that there exists a bijection between the isomorphism classes of exceptional sequences of $\COH(\XX)$ and their corresponding reflection factorizations. Further this map is equivariant for the action of the braid group.

The last three sections are devoted to the introduction of elliptic  Weyl groups, their study and application to the weighted projective line of tubular type. In Section \ref{sec:Elliptic} we introduce the notion of an elliptic root system, which is due to Saito \cite{Sai85}, and we further recall the structure of an elliptic root system as well as its related  reflection group, the elliptic Weyl group (see Sections~\ref{subsec:Basis} and \ref{subsec:EllipticWeylGroup}). In particular, we remind that for each elliptic root system there is an elliptic Dynkin diagram that  determines the elliptic root system up to isomorphism (see Proposition~\ref{thm:SaitoClassification}). We show that in the tubular case the extended Coxeter-Dynkin diagram of the extended root system attached to the category $\COH(\XX)$ is an elliptic Dynkin diagram with respect to an elliptic root basis.  As a consequence we obtain that if $\XX$ is of tubular type, then the extended root system $\Phi$ attached to $\COH(\XX)$ is elliptic and simply-laced (see Corollary \ref{prop:TubEllRoot}). Therefore, we call $\Phi$ a \defn{tubular elliptic root system} and the corresponding Weyl group \defn{tubular elliptic Weyl group} (see Section~\ref{subsec:TubEllRootSystem}). Note that they do not exhaust the classes of elliptic root systems  and Weyl groups, respectively.

In the last two sections we show that conditions (a) and (b) of Theorem~\ref{conj:WeightProjElliptic0} hold for tubular elliptic Weyl groups.

\begin{theorem} \label{thm:MainElliptic}
Let $\Phi$ be a tubular elliptic root system, $\Gamma= \Gamma(\Phi)$ a tubular elliptic root basis and $c \in W$ a Coxeter transformation with respect to $\Gamma$. If $\Phi$ is not of type $D_4^{(1,1)}$, then the Hurwitz action is transitive on the set $\Redd(c)$. If $\Phi$ is of type $D_4^{(1,1)}$, then the Hurwitz action has two orbits on $\Redd(c)$.
\end{theorem}

\medskip
\noindent Notice that this result was already obtained by Kluitmann by different means and in a different context in his PhD thesis \cite{Klu87} for the tubular elliptic root systems $E_6^{(1,1)},~E_7^{(1,1)}$ and $E_8^{(1,1)}$. 

Condition (b) of Theorem \ref{conj:WeightProjElliptic0} will be shown for elliptic Weyl groups in Section~\ref{sec:rigid}. In this case we call $[\idop, c]^{\mathrm{gen}}$ \defn{rigid}. As a consequence of this as well as of Theorems ~\ref{conj:WeightProjElliptic0} and \ref{thm:MainElliptic} we obtain the combinatorial description of the set of thick subcategories of $\COH(\XX)$ for $\XX$ of tubular type, which we aimed to produce.

\begin{theorem} \label{cor:MainTheorem}
For a weighted projective line $\XX$ of tubular type, but not of type $D_4^{(1,1)}$ over an algebraically closed field $k$ of characteristic zero  there exists an order preserving bijection between
	\begin{itemize}
		\item the set of thick subcategories of $\COH(\XX)$ that are generated by an exceptional sequence in $\COH(\XX)$; and
		\item the subposet $[\idop, c]^{\mathrm{gen}}$ of the interval poset $[\idop, c]$.
	\end{itemize}
If $\XX$ is of type $D_4^{(1,1)}$, then the assertion holds for the subposet $[\idop, c]_D^{\mathrm{gen}}$ of $[\idop, c]^{\mathrm{gen}}$ instead of $[\idop, c]^{\mathrm{gen}}$ (see Definition \ref{Def:SubposetD}).
\end{theorem}

Note that as a consequence of Theorem~\ref{thm:MainElliptic} the bijection is more elaborated in type $D_4^{(1,1)}$. The bijections given in Theorem \ref{cor:MainTheorem} have applications in both directions. At the moment we are working on a nice combinatorial description of the poset $[\idop, c]^{\mathrm{gen}}$, which will also yield, via the bijection, a description of the  Schur roots in the Grothendieck group $K_0(\XX)$ (for the definition of a Schur root see Section~\ref{subsec:ExSequence}).

\bigskip
\subsection*{Acknowledgments}
The authors specially thank Henning Krause for mentioning the question of the existence of a bijection as presented in Theorem~\ref{cor:MainTheorem} to us. Further they thank him, as well as Dieter Vossieck and Lutz Hille for fruitful discussions. We also like to thank Charly Schwabe, who made us aware of a mistake in Theorem~\ref{thm:MainElliptic}.

\newpage

\section{Extended root systems and extended Weyl groups}\label{sec:Notation}

In this section we introduce notation that we use throughout this paper. In particular
we recall the definition of a generalized root system and define the extended Weyl groups, which  can be partially found in \cite{STW16} or \cite{Sai85}.

\subsection{Some basic notation}\label{subsec:notation}
Let $V$ be a finite dimensional $\RR$-space and $(-\mid -)$ a symmetric bilinear form on $V$. A vector $\alpha$ is non-isotropic, if it holds $(\alpha \mid \alpha)\neq 0$, and  the reflection of $V$ with respect to the non-isotropic $\alpha$ is 
   $$s_\alpha(v):= v -  \frac{2(\alpha\mid v)}{(\alpha,\alpha)} \alpha,~\mbox{for all}~v \in V.$$
Note that in this paper we use the convention that for linear maps $f,g$ of $V$ it is $(fg)(v) = f(g(v))$ for all $v \in V$.

The following definition is a generalization of the notation of a root system in the theory of finite Coxeter systems (see \cite[Section 1.2]{Sai85}).

\begin{Definition}\label{def:root_system}
A non-empty subset $\Phi\subseteq V$ of non-isotropic vectors is called \defn{generalized root system} if the following properties are satisfied
\begin{enumerate}
\item[(a)] $\spanr(\Phi)= V$,
\item[(b)] $s_{\alpha}(\Phi) \subseteq \Phi$ for all $\alpha\in \Phi$ and
\item[(c)] $\Phi$ is \defn{crystallographic}, i.e. $\frac{2(\alpha \mid \beta)}{(\beta \mid \beta)} \in \mathbb{Z}$ for all $\alpha,\beta \in \Phi$.
\end{enumerate}

The elements in a generalized root system are \defn{roots}. The generalized root system  $\Phi$  is  \defn{reduced} if $r\alpha\in \Phi$ ($r\in \RR$) implies $r=\pm 1$ for all $\alpha\in \Phi$, and the system is \defn{irreducible} if there do not exist generalized root systems $\Phi_{1},\Phi_{2}$ such that $\Phi=\Phi_{1} \cup \Phi_{2}$ and $\Phi_{1} \bot \Phi_{2}$. A non-empty subset $\Psi$ of a generalized root system $\Phi$ is a \defn{root subsystem} if $\Psi$ is a generalized root system in $\spanr(\Psi)$. For a subset $R=\{\alpha_1, \ldots, \alpha_n \} \subseteq \Phi$ we define $\langle R \rangle_{\text{RS}}$ to be the smallest root subsystem of $\Phi$ that contains  $R$, and we call it the \defn{root subsystem generated by $R$}. For a subset $\Psi \subseteq \Phi$ we put $L(\Psi) = \spanz(\Psi)$. The crystallographic condition on  $\Phi$ implies  that $L(\Psi)$ is a lattice. The root system $\Phi$  is  \defn{simply-laced} if $(\alpha \mid \alpha) =2$ for all $\alpha \in \Phi$. Two generalized root systems $\Phi_{1}$ and $\Phi_{2}$ are  \defn{isomorphic} if there exists a linear isometry between the corresponding ambient spaces that sends $\Phi_{1}$ to $\Phi_{2}$.
\end{Definition}

Notice that in \cite[Section 2.1]{STW16} a simply laced generalized root system consists of more information, namely of a lattice, a symmetric bilinear form, a set of roots and an element called Coxeter transformation.
  
For $\Psi \subseteq \Phi$ let 
$$
W_{\Psi}: = \langle s_{\alpha} \mid \alpha \in \Psi \rangle
$$
be the reflection group on $V$ related to $\Psi$. The  following is an easy to check consequence of the crystallographic condition on the root system $\Phi$.

\begin{Lemma}\label{lem:crystallographic}
If $\Psi \subset \Phi$, then 
$w(L(\Psi)) \subseteq L(\Psi)$ holds for every $w \in W_\Psi$.
\end{Lemma}

\newpage
\subsection{Extended Coxeter-Dynkin diagrams}
We start by introducing a diagram $\Gamma$, the so-called \defn{extended Coxeter-Dynkin diagram}.
For a non-negative integer $r\in \NN_{0}$ and numbers $p_{i}\in \NN$ ($1 \leq i \leq r$) it is defined as follows:

\begin{figure}[h!]
  \centering
  \begin{tikzpicture}[scale=3.4]
    \node (A6666) at (0,-4) [circle, draw, fill=black!50, inner sep=0pt, minimum width=4pt] {};
    \node (AA6666) at (0,-3.9) [] {\tiny{$(1,p_1-1)$}};
    \node (A666) at (0.5,-4) [circle, draw, fill=black!50, inner sep=0pt, minimum width=4pt] {};
    \node (AA666) at (0.5,-3.9) [] {\tiny{$(1,p_1-2)$}};
    \node (A66) at (0.75,-4) [] {$\ldots$};
    \node (A6) at (1,-4) [circle, draw, fill=black!50, inner sep=0pt, minimum width=4pt] {};
    \node (AA6) at (1,-3.9) [] {\tiny{$(1,2)$}};
    \node (B6) at (1.5,-4) [circle, draw, fill=black!50, inner sep=0pt, minimum width=4pt]{};
    \node (BB6) at (1.45,-3.9) [] {\tiny{$(1,1)$}};
    \node (C6) at (2,-4) [circle, draw, fill=black!50, inner sep=0pt, minimum width=4pt]{};
    \node (CC6) at (2,-4.1) [] {\tiny{$1$}};
    \node (D6) at (2.5,-4) [circle, draw, fill=black!50, inner sep=0pt, minimum width=4pt]{};
    \node (DD6) at (2.55,-3.9) [] {\tiny{$(r,1)$}};
    \node (E6) at (3,-4) [circle, draw, fill=black!50, inner sep=0pt, minimum width=4pt]{};
    \node (EE6) at (3,-3.9) [] {\tiny{$(r,2)$}};
    \node (E66) at (3.25,-4) [] {$\ldots$};
    \node (F6) at (3.5,-4) [circle, draw, fill=black!50, inner sep=0pt, minimum width=4pt]{};
    \node (FF6) at (3.5,-3.9) [] {\tiny{$(r,p_r-2)$}};
    \node (G6) at (4,-4) [circle, draw, fill=black!50, inner sep=0pt, minimum width=4pt]{};
    \node (GG6) at (4,-3.9) [] {\tiny{$(r,p_r-1)$}};
    \node (I6) at (2,-3.5) [circle, draw, fill=black!50, inner sep=0pt, minimum width=4pt]{};
    \node (II6) at (2,-3.4) [] {\tiny{$1^*$}};
    \node (J6) at (2.35,-4.2) [circle, draw, fill=black!50, inner sep=0pt, minimum width=4pt]{};
    \node (JJ6) at (2.55,-4.15) []{\tiny{$(r-1,1)$}};
    \node (J66) at (2.7,-4.4) [circle, draw, fill=black!50, inner sep=0pt, minimum width=4pt]{};
    \node (JJ66) at (2.9,-4.35) []{\tiny{$(r-1,2)$}};
    \node (J66666) at (2.88,-4.5) []{$\ldots$};
    \node (J666) at (3.05,-4.6) [circle, draw, fill=black!50, inner sep=0pt, minimum width=4pt]{};
    \node (JJ666) at (3.4,-4.55) []{\tiny{$(r-1,p_{r-1}-2)$}};
    \node (J6666) at (3.4,-4.8) [circle, draw, fill=black!50, inner sep=0pt, minimum width=4pt]{};
    \node (JJ6666) at (3.75,-4.75) []{\tiny{$(r-1,p_{r-1}-1)$}};
    \node (K6) at (1.65,-4.2) [circle, draw, fill=black!50, inner sep=0pt, minimum width=4pt]{};
    \node (KK6) at (1.55,-4.15) []{\tiny{$(2,1)$}};
    \node (K66) at (1.3,-4.4) [circle, draw, fill=black!50, inner sep=0pt, minimum width=4pt]{};
    \node (KK66) at (1.2,-4.35) []{\tiny{$(2,2)$}};
    \node (K66666) at (1.12,-4.5) []{\ldots};
    \node (K666) at (0.95,-4.6) [circle, draw, fill=black!50, inner sep=0pt, minimum width=4pt]{};
    \node (KK666) at (0.75,-4.55) []{\tiny{$(2,p_2-2)$}};
    \node (K6666) at (0.6,-4.8) [circle, draw, fill=black!50, inner sep=0pt, minimum width=4pt]{};
    \node (KK6666) at (0.4,-4.75) []{\tiny{$(2,p_2-1)$}};
    \node (L6) at (2,-4.5) []{\ldots};


    \draw[-] (A666) to (A6666);
    \draw[-] (J666) to (J6666);
    \draw[-] (K666) to (K6666);
    \draw[-] (A6) to (B6);
    \draw[-] (B6) to (C6);
    \draw[-] (C6) to (D6);
    \draw[-] (D6) to (E6);
    \draw[-] (F6) to (G6);
    \draw[-] (I6) to (B6);
    \draw[-] (I6) to (D6);
    \draw[-] (I6) to (J6);
    \draw[-] (I6) to (K6);
    \draw[-] (J6) to (J66);
    \draw[-] (K6) to (K66);
    \draw[-] (C6) to (J6);
    \draw[-] (C6) to (K6);
    \draw[dashed] ([xshift=0.5]C6.north) to ([xshift=0.5]I6.south);
    \draw[dashed] ([xshift=-0.5]C6.north) to ([xshift=-0.5]I6.south);

  \end{tikzpicture} 
  \caption{Extended Coxeter-Dynkin diagram $\Gamma$ $\phantom{000000000000000}$} \label{def:GenCoxDiag}
\end{figure}

\subsection{Extended spaces, extended root systems and extended Weyl groups}\label{subsec:ExtendedSpace}
We attach analogously to the Tits representation for Coxeter groups (see \cite[Section 5.3]{Hum90}) a geometric datum to the extended Coxeter-Dynkin diagram. This will lead to the definition of the extended Weyl group and of a Coxeter transformation.

Let $Q$ be the vertex set of the diagram $\Gamma$ given in Figure \ref{def:GenCoxDiag}. Define $V$ to be the real vector space $V$ with basis $B:=\lbrace \alpha_{\nu}\mid \nu\in Q\rbrace$. We further define a symmetric bilinear form $(-\mid-)$ on $V$ by setting 
\begin{equation*}
  (\alpha_{\nu}\mid \alpha_{\omega}) =
  \begin{cases}
    2  & \nu,\omega \in Q \text{ are connected by a dotted double bound or }\nu=\omega, \\
    0  & \nu,\omega \in Q \text{ are disconnected,}\\          
    -1 & \nu,\omega \in Q \text{ are connected by a single edge},
  \end{cases}
\end{equation*}
and extending bilinearly to $V$.

We call $(V,B, (- \mid -))$ an \defn{extended space}.

A direct calculation yields the signature of the symmetric bilinear form $(-\mid -)$ in an extended space:

\begin{Lemma}\label{lem:signature}
Let $(V, B, (-\mid -))$ be an extended space.  The signature of $(-\mid-)$ is $(|B|-2,1,1)$, $(|B|-2,0,2)$ or $(|B|-1,0,1)$ where the first, the second and the third entries are the geometric dimensions of the positive, the negative and the  zero eigenvalues, respectively.
\end{Lemma}
\medskip

\begin{propdef} \label{def:basic}
Let $\mathcal{V}:=(V, B, (- \mid -))$ be an extended space.
\begin{enumerate}
\item[(a)] The group $W:=\langle s_{\alpha}\mid \alpha\in B\rangle$ is called \defn{extended Weyl group} (of $\mathcal{V}$). We say that  it is of \defn{wild},  \defn{tubular} or \defn{domestic} type if the signature of $(- \mid -)$ is $(|B|-2,1,1)$, $(|B|-2,0,2)$ or $(|B|-1,0,1)$, respectively. Further we denote by $R$ the radical of the form $(-\mid -)$. It is easy to see that the signatures that occur are exactly the following:
\begin{align*}
\text{tubular: }&(m,0,2) \text{ for } m=4,6,7,8\\
\text{domestic: }&(m,0,1) \text{ for } m\in \NN \\
\text{wild: }&(m,1,1) \text{ for } m\geq 6
\end{align*}
\item[(b)] Set $a:=\alpha_{1^{*}}-\alpha_{1}$. Then $\TR_a := s_{\alpha_{1}} s_{\alpha_{1^{*}}}$ is a ``translation'' that  maps $v \in V$ to $v - (\alpha_1, v) a$. If $W$ is of  domestic or wild type, then dim $R = 1$ and $R = \spanr(a)$. If $W$ is of tubular type, then $\spanr(a) \subset R$ and dim$_\RR(R) = 2$.
\item[(c)] The set $S:=\lbrace s_{\alpha}\mid \alpha \in B \rbrace$ is called \defn{simple system} and its elements  \defn{simple reflections}. We call $(W,S)$ an \defn{extended Weyl system} of $\mathcal{V}$. The set $T:=\bigcup_{w\in W}wSw^{-1}$ is  the \defn{set of reflections} for $(W,S)$.
\item[(d)] Let $\Phi\subseteq V$ be the minimal set that contains $B$ and is closed under the action of $W$, which is $w(\beta)\in \Phi$ for all $w\in W$ and $\beta \in \Phi$. Then $\Phi$ is a reduced and irreducible generalized root system and $\Phi=\{ w (\alpha) \mid w \in W,~\alpha \in B \}$. We call $\Phi$ \defn{extended root system} and the elements of $B$ are called \defn{simple roots}.
\item[(e)] The extended root system $\Phi$ is simply-laced.
\item[(f)] An element $c:=\left(\prod_{\alpha \in B\setminus \lbrace \alpha_{1},\alpha_{1^{*}}\rbrace}s_{\alpha}\right)\cdot s_{\alpha_{1}}s_{\alpha_{1^{*}}}$ is called \defn{Coxeter transformations} where we take the first $|B|-2$ factors in an arbitrary order.
\item[(g)] Set $\Gamma_0:= \Gamma\setminus{\{1^*\}}, B_0:= B\setminus{\{\alpha_{1^*}\}}, W_0:= W_{\Gamma_0}, S_0:= \{s_\alpha \mid \alpha \in B_0\}$ and $V_0:= \spanr(B_0)$. Then $(W_0,S_0)$ is a Coxeter system with Coxeter diagram $\Gamma_0$, generalized root system $\Phi_0$ and Tits representation $W_0 \rightarrow \GL(V_0)$.
\end{enumerate}
\end{propdef}

\begin{Lemma}\label{lem:reflectionsNonelliptic}
   Let $W$ be of domestic or wild type with related extended root system $\Phi$. Then 
   $\Phi = \Phi_0 \oplus \ZZ a$, where 
    $a =\alpha_{1^{*}}-\alpha_{1}$.
\end{Lemma}

\begin{proof}
It is $B \subset \Phi_0 \oplus \ZZ a$  and the latter set is invariant under the action of $W$. Thus $\Phi \subseteq \Phi_0 \oplus \ZZ a$. From the definition we get $\Phi_0  \subset \Phi$.  As the Coxeter diagram $\Gamma_0$ for $W_0$ is a tree, all the roots in $\Phi_0$ are conjugate. Therefore, it suffices to show $\alpha_1 + \ZZ a \subseteq \Phi$, which follows from the facts that there is $\alpha \in \Phi$ such that $(\alpha \mid \alpha_1) =1$ and the action of $\TR_a(\alpha_1)$ on $V$.
\end{proof}   

In the following we will also consider for $U$ a subspace of the radical $R$ the projection 
$p_U: V \rightarrow V/U$. Let $W_{\Phi/U}$ be the 
action of $W = W_\Phi$ on $V/U$. We denote the map
from $W$ onto $W_{\Phi/U}$ also by $p_U$.

\begin{Lemma}\label{lem:Projection}
If $W$ is of domestic or wild type, then $W_{\Phi/R} \cong W_0$.
\end{Lemma}
\begin{proof} It follows from the fact that $p_R(s_{\alpha_{1^*}}) = p_R(s_{\alpha_{1}})$.
\end{proof}  

Further this isomorphism transports $\Phi/R$ onto $\Phi_0$ and we will identify the two root systems.

\subsection{The Hurwitz action}\label{subsec:HurwitzN}
We define the \defn{Hurwitz action} for arbitrary groups to connect the braid group action on exceptional sequences with a suitable action on tuples of reflections. Let $G$ be a group and $(g_{1},\ldots,g_{n})$ an element of $G^{n}$. The Hurwitz action on $G^n$ is defined by
\begin{align*}
  \sigma_i (g_1 ,\ldots , g_n )      & = (g_1 ,\ldots , g_{i-1} , \hspace*{5pt} \phantom
  {g_i}g_{i+1}\phantom{g_i^{-1}}, \hspace*{5pt} g_{i+1}^{-1}g_ig_{i+1}, \hspace*{5pt} g_{i+2} ,
  \ldots , g_n),                                                                                      \\
  \sigma_i^{-1} (g_1 ,\ldots , g_n ) & = (g_1 ,\ldots , g_{i-1} , \hspace*{5pt} g_i g_{i+1} g_i^{-1},
  \hspace*{5pt} \phantom{g_{i+1}}g_i\phantom{g_{i+1}^{-1}}, \hspace*{5pt} g_{i+2} ,
  \ldots , g_n).
\end{align*}
for the standard generator $\sigma_{i}\in \mathcal{B}_n$, for $1\leq i \leq n-1$. In particular, for a subset $T$ of $G$ that is closed under conjugation, the Hurwitz action restricts to an action on $T^{n}$.

\begin{Lemma} \label{lem:HurwitzLattice}
Let $\Phi$ be an extended root system and $\alpha_1,\ldots,\alpha_k,\beta_1, \ldots,\beta_k \in \Phi$ such that $\sigma (s_{\alpha_1},\ldots,s_{\alpha_k})=(s_{\beta_1}, \ldots,s_{\beta_k})$ for some $\sigma \in \mathcal{B}_k$. Then $L(\{\alpha_1,\ldots,\alpha_k\})=L(\{\beta_1, \ldots,\beta_k\})$.
\end{Lemma}

\begin{proof}
It is enough to check the assertion for the case that $\sigma$ is a standard generator, say $\sigma = \sigma_i$. Then 
    $$
    \sigma_i (s_{\alpha_1},\ldots,s_{\alpha_k}) = (s_{\alpha_1},\ldots, s_{\alpha_{i-1}}, s_{\alpha_{i+1}}, s_{s_{\alpha_{i+1}}(\alpha_{i})}, s_{\alpha_{i+2}},\ldots, s_{\alpha_k}),
    $$ 
and $s_{\alpha_{i+1}}(\alpha_{i})= \alpha_i -\frac{2(\alpha_i\mid \alpha_{i+1})}{(\alpha_{i+1}\mid \alpha_{i+1})}\alpha_{i+1}$. In particular, we observe that
    $$
    L(\{\alpha_1,\ldots,\alpha_k\}) = 
    L(\{ \alpha_1,\ldots, \alpha_{i-1}, \alpha_{i+1}, s_{\alpha_{i+1}}, \alpha_{i+2},\ldots, \alpha_k\}).
    $$
\end{proof}

\subsection{The reflection length of Coxeter transformations}\label{sec:reflength}
The aim of this subsection is to show Proposition~\ref{lem:redCoxElt}. Thus we determine the reflection length of a Coxeter transformation $c$ in an extended Weyl group $W$.

Let $\mathcal{V}:=(V, B, (- \mid -))$ be an extended space and let $(W,S)$ an extended Weyl system (of $\mathcal{V}$) with set of reflections $T$ and corresponding extended root system $\Phi$. We put $n = |S|=$ dim$_\RR(V)$.

As the set of reflections $T$ generates $W$, each $x \in W$ is a product $x = t_1 \cdots t_r$ where $t_i \in T$. If this product is of minimal length, then we say that this $T$-factorization is \defn{reduced} and that $\ell_T(x) =r$. We show that $\ell_T(c) = |S|$, which implies that the factorization of  $c$  in pairwise different simple reflections is $T$-reduced. This fact will be an ingredient of the proof of Theorem~\ref{conj:WeightProjElliptic0}. In the further course we have to distinguish whether the form is tubular or not.

We first consider the cases where the signature of the form $(-\mid-)$ is $(m,0,1)$ for $m \in \mathbb{N}$ or $(m,1,1)$ for $m\geq 6$, that is $W$ is of domestic or wild type, respectively. We keep the notation as introduced in Sections \ref{subsec:notation} to \ref{subsec:HurwitzN}. 

\begin{proof}[\textbf{Proof of Proposition~\ref{lem:redCoxElt} (the domestic and wild cases)}]
Assume that the length of the Coxeter transformation is strictly smaller than $n+2$ and assume, for simplicity, that
$$
c:=\bigg(\prod_{\alpha \in B\setminus \lbrace \alpha_{1},\alpha_{1^{*}}\rbrace}s_{\alpha}\bigg)\cdot s_{\alpha_{1}}s_{\alpha_{1^{*}}} \in W,
$$
i.e. the reflections $s_{\alpha_{1}}$ and $s_{\alpha_{1}^{*}}$ are at the last positions of the factorization and the remaining reflections can be arbitrary ordered. Then by considering the map $\rho:W  \rightarrow W_{\Phi_0}$ induced by the projection
$$
p: \Phi_0 \oplus \ZZ a \rightarrow \Phi_0, \beta + ka \mapsto \beta
$$
and by parity arguments, the length of $c$ has to be $n$.

Let $c=s_{\beta_{1}+\ell_{1}a} \cdots s_{\beta_{n}+\ell_{n}a}$ be a reduced factorization with $\beta_{i}+\ell_{i}a \in \Phi_0\oplus \ZZ a$, $\beta_{i}\in \Phi_0$ and $\ell_{i}\in \ZZ$. After a possible renumbering we can assume that $c= s_{\alpha_2} \cdots s_{\alpha_{n+1}} s_{\alpha_1}s_{\alpha_{1^*}}$ and therefore
\begin{align*}
\rho(c) = \prod_{\alpha \in B\setminus \lbrace \alpha_{1},\alpha_{1^{*}}\rbrace}s_{\alpha} =s_{\alpha_2} \cdots s_{\alpha_{n+1}} = s_{\beta_{1}} \ldots s_{\beta_{n}}.
\end{align*}

\noindent Since $W_{\Phi_0}$ is a Coxeter group, the element $\rho(c)$ is a parabolic Coxeter element in $W_{\Phi_0}$ and by \cite[Theorem 1.3]{BDSW14} the Hurwitz action is transitive on the reduced factorizations of $\rho(c)$, that is, there exists $\sigma \in \mathcal{B}_{n}$ with
\begin{align} \label{Eq:BraiExcCoxWild}
\sigma (s_{\alpha_2}, \ldots, s_{\alpha_{n+1}}) = (s_{\beta_{1}}, \ldots, s_{\beta_{n}})
\end{align}
The latter yields
\begin{align*}
\sigma (s_{\alpha_2}, \ldots, s_{\alpha_{n+1}}, s_{\alpha_1}, s_{\alpha_{1^*}}) = (s_{\beta_{1}}, \ldots, s_{\beta_{n}}, s_{\alpha_1}, s_{\alpha_{1^*}}),
\end{align*}
thus
\begin{align*}
\idop = c^{-1}c
=\big(s_{\alpha_{1^*}}s_{\alpha_{1}} s_{\alpha_{n+1}} \cdots s_{\alpha_2} \big) \big(s_{\beta_{1}+\ell_{1}a} \cdots s_{\beta_{n}+\ell_{n}a} \big) = \big(s_{\alpha_{1^*}}s_{\alpha_{1}} s_{\beta_{n}} \cdots s_{\beta_1} \big) \big(s_{\beta_{1}+\ell_{1}a} \cdots s_{\beta_{n}+\ell_{n}a} \big) 
\end{align*}
and
\[s_{\alpha_{1}} s_{\alpha_{1}^{*}} = s_{\beta_{n}} \cdots s_{\beta_{1}}  s_{\beta_{1}+\ell_{1}a} \cdots s_{\beta_{n}+\ell_{n}a}.\]
It is easy to check (by induction) that
\[s_{\beta_{n}} \cdots s_{\beta_{1}}  s_{\beta_{1}+\ell_{1}a} \cdots s_{\beta_{n}+\ell_{n}a}(x)=x-\left(x~ \bigg \vert ~ \sum_{i=1}^{n}k_{i}\beta_{i}\right)a\]
for some $k_{i}\in \ZZ$ and $s_{\alpha_{1}} s_{\alpha_{1}^{*}}(x)=x-(x\mid \alpha_{1})a$. The previous yields
\[\left( x ~\bigg \vert~ \alpha_{1}-\sum_{i=1}^{n}k_{i}\beta_{i} \right)=0\]
for all $x\in \spanz(\Phi_0)$. Since the radical of $(-\mid-)$ restricted to $\spanz(\Phi_0)$ is trivial, we obtain by (\ref{Eq:BraiExcCoxWild}) and Lemma \ref{lem:HurwitzLattice} that
$$
\alpha_{1}\in \spanz (\beta_{1},\ldots,\beta_{n} ) = \spanz (\alpha \mid \alpha \in B_{0}\setminus \lbrace \alpha_1 \rbrace ),
$$
contradicting the fact that $\spanz(\Phi_0)$ is of rank $n+1$.
\end{proof}

\begin{remark}
    In the tubular case, Kluitmann has already shown in \cite[Korollar~2.3]{Klu87} that a Coxeter transformation cannot be written as a product of less than $n+2$ reflections, which completes the Proof of Proposition~\ref{lem:redCoxElt}. Nevertheless, for sake of completeness we will inculde a proof for the tubular case in Section \ref{subsec:CoxTrafo} after a more detailed study of tubular extended Weyl groups.
\end{remark}

\medskip
Of particular interest, also in view of our applications, is the set of elements lying below a Coxeter element with respect to the relation $\leq $ given in Definition~\ref{def:PrefixPoset}.

\subsection{The interval posets $[\idop, c]$ and  $[\idop, c]^{\operatorname{gen}}$} \label{sec:IntervalPoset}
Let $(W,S)$ be an extended Weyl system with set of reflections $T$ and let $\Phi$ be the associated extended root system. As $T$ generates $W$, each $x \in W$ is a product $x = t_1 \cdots t_r$ where $t_i \in T$. If this product is of minimal length, then we say that this $T$-factorization is \defn{reduced} and that $\ell_T(x) =r$. For $x  \in W$ with $\ell_T(x) =r$ we put 
$$
\Red_T(w) := \{ (t_1 , \ldots, t_r) \in T^r \mid w=t_1 \cdots t_r \}.
$$

\begin{Definition} \label{def:PrefixPoset}
$\phantom{4}$
\begin{itemize}
\item[(a)] Define a partial order on $W$ by $x \leq y ~ \mbox{if and only if} ~\ell_T(y)= \ell_T(x) + \ell_T(x^{-1}y)$ for $x,y \in W$, called \defn{absolute order}, where $\ell_T$ is the \defn{length function} on $W$ with respect to $T$.
\item[(b)]For $w \in W$ the interval $[\idop, w] = \{x \in W \mid \idop \leq x \leq w \}$ is called the \defn{interval poset} of $w$ with respect to the partial order $\leq$.
\item[(c)] Let $w$ and $(t_1,\ldots,t_{n+2}) \in \Red_T(c)$. We call $(t_1,\ldots , t_i)$ a \defn{prefix} of the factorization $(t_1,\ldots,t_{n+2})$ for $1 \leq i \leq n+2 $.
\end{itemize}
\end{Definition}

Note that $x \leq w$ if and only if there exists a reduced factorization $\left(t_1,\ldots , t_{\ell_T(x)} \right) \in \Red_T(x)$ which is a prefix of a reduced factorization of $w$. By abuse of notation we sometimes call $x$ to be a prefix of $w$. For the extended root systems we need studying generating factorizations.

\begin{Definition}\label{def:Generating}
$\phantom{4}$
\begin{itemize}
\item[(a)]Let $w \in W$, $t_{i}\in T$ for $1 \leq i \leq n+2 $ such that $w=t_{1}\cdots t_{n+2}$. We call the factorization $(t_{1},\ldots, t_{n+2})$ \defn{generating}, if $W=\langle t_{1},\ldots,t_{n+2} \rangle$.
\item[(b)] Assume that $w \in W$ admits a generating factorization.  Define the subposet $[\idop, w]^{\text{gen}}$ of $[\idop, w]$ to be the set of prefixes of reduced generating factorizations of $w$. More precisely, we have $x \in [\idop, w]^{\text{gen}}$ if and only if there exists $\left(t_1,\ldots , t_{\ell_T(x)} \right) \in \Red_T(x)$ such that $\left(t_1,\ldots , t_{\ell_T(x)} \right)$ is a prefix of a reduced generating factorization for $w$.
\end{itemize}
\end{Definition}
    
\begin{Lemma}\label{lem:gen_root_lattice}
Let $(s_{\beta_{1}},\ldots,s_{\beta_{n+2}})$ be a generating factorization with $\beta_{i}\in \Phi$ for $1\leq i\leq n+2$. Then $L(\Phi)= L(\{\beta_{1},\ldots,\beta_{n+2}\})$.
\end{Lemma}

\begin{proof}
Since the extended Coxeter-Dynkin diagram in Figure \ref{def:GenCoxDiag} has a spanning tree consisting of simple edges, it is easy to see that all simple reflections are conjugated in $W$. Therefore the set of reflections forms a single conjugacy class. In particular, for every $\beta\in \Phi$ there exists $w\in W=\langle s_{\beta_{1}},\ldots, s_{\beta_{n+2}} \rangle$ such that $s_{w(\beta_{1})}=ws_{\beta_{1}}w^{-1}=s_{\beta}$. As $\Phi$ is crystallographic and reduced by definition and by Proposition \ref{def:basic} and as $w$ has a factorization  in the reflections $s_{\beta_{1}},\ldots,s_{\beta_{n+2}}$, it follows that $\beta=\pm w(\beta_{1}) \in L(\beta_{1},\ldots,\beta_{n+2})$. Therefore Lemma \ref{lem:crystallographic} yields $L(\Phi)\subseteq \spanz(\beta_{1},\ldots,\beta_{n+2})$.
\end{proof}

\begin{remark}
In Example~\ref{Example:Non-generating} we will present an example of an extended root system $\Phi$ such that $W_{\Phi}$ contains a Coxeter transformation that admits a non-generating factorization.
\end{remark}

\section{The extended root systems attached to the categories $\COH(\XX)$ and $D^b(\COH(\mathbb{X}))$} \label{sec:RootSystemDerivedCategory}
Throughout the following sections $k$ will be an algebraically closed field of characteristic zero (see also \cite{STW16}), and $\COH(\mathbb{X})$ the category of coherent sheaves over a weighted projective line $\XX$ in the sense of \cite{GL87}. We will not introduce this category in detail, but recall some basic facts  on $\COH(\mathbb{X})$ as well as on its bounded derived category $\mathcal{D}:=D^b(\COH(\mathbb{X}))$ in the next subsection. Throughout the whole section we will quote and prove some more properties that we will  use in the proof of Theorem~\ref{conj:WeightProjElliptic0}. We refer the reader to \cite{HTT07, CK09, GL87} for the definitions and for further details.

\subsection{The categories $\COH(\mathbb{X})$ and $D^b(\COH(\mathbb{X}))$}\label{subsec:IntroCategories}
By \cite[Chapter 6, Section 10.2]{HTT07} or \cite[Theorem 6.8.1]{CK09} the category $\COH(\XX)$ satisfies the following properties, which we will use. Note that in this paper we will abbreviate $\HOM_{\COH(\XX)}$ and $\EXT^{n}_{\COH(\XX)}$ by $\HOM_{\XX}$ and $\EXT^{n}_{\XX}$, respectively.
\begin{itemize}
\item $\COH(\XX)$ is a connected abelian $k$-category, and each object in $\COH(\XX)$ is noetherian.
\item  $\COH(\XX)$ is (skeletally) small and ext-finite, that is, all morphism and extension spaces are finite dimensional $k$-vector spaces.
\item  There exists a Serre functor $\tau$, that is an equivalence $\tau:\COH(\XX)\rightarrow \COH(\XX)$ and natural isomorphisms $\EXT_{\XX}^{1}(X,Y)^{*}=\HOM_{\XX}(Y,\tau X)$ for all objects $X,Y$ of $\COH(\XX)$, where $(-)^{*}$ denotes the $k$-duality $\HOM_k(-,k)$.
\item  $\COH(\XX)$ has a tilting object.
\end{itemize}

In fact, the category $\COH(\XX)$ is characterized by these properties together with the remaining properties of  \cite[Chapter 6, Section 10.2]{HTT07}.

For every abelian $k$-category $\mathcal A$ there is the concept of the bounded derived category $D^b(\mathcal A)$. The derived category $D(\mathcal A)$ is obtained from the classes of cochain complexes in $\mathcal A$ up to homotopy by formally inverting all quasi-isomorphisms. The full subcategory of  $D(\mathcal A)$ consisting of the objects that are isomorphic to a complex $A$ such that $A^n = 0$ for almost all $n \in \ZZ$ is denoted by $D^b({\mathcal A})$.

There is a functor on $D^b(\mathcal A)$, the \defn{shift} operator: let $(A,d_A)$ be a cochain complex, that is a representative of an object of $D^b({\mathcal A})$. For $i \in \ZZ$ we denote by $A[i]$ the shifted complex with
$$A[i]^n = A^{i+n}~\mbox{and}~d_{A[i]}^n = (-1)^n d_A^{n+i}.$$
In particular, $[i]: [(A,d_A)] \mapsto [(A[i], d_{A[i]})]$ is an isomorphism on $D^b({\mathcal A})$. Note also that  $D^b(\mathcal A)$ is an ext-finite triangulated $k$-category. The concept of a bounded derived category of an abelian category $\mathcal A$ is a helpful tool also because of the following fact. 

\begin{Proposition}[{\cite[Prop. 2.1.1]{CK09}}]\label{prop:CK211}
$\EXT_{\mathcal A}^n(A,B) \cong \HOM_{D^b({\mathcal A})}(A,B[n])$
for all $A,B \in {\mathcal A}$ and $n \in \ZZ$.
\end{Proposition}

Recall that a morphism $\phi: A \rightarrow B$ between cochain complexes $(A, d_X)$ and $(B,d_Y)$ consists of morphisms $\phi^n: A^n \rightarrow B^n$ with $d^n_Y \phi^n = \phi^{n+1}d^n_X$ for all $n \in \ZZ$.

Following \cite{STW16} we will attach to the categories  $\mathcal{D}$ and $\COH(\mathbb{X})$ an extended root system (see Section~\ref{subsec:SimplyLacedGenRoot} and the definition and discussion of an extended root system in \cite{BWY21}). The Grothendieck groups $K_0(\mathcal{D})$ and $K_0(\COH(\mathbb{X}))$ will serve as replacements for the respective root lattices. Attached to them  is  a bilinear form, the \defn{Euler form}, that is  defined as follows. For objects $X,Y$ of $\COH(\mathbb{X})$ we set
\[\chi([X],[Y]):=\sum_{n\geq 0} (-1)^{n}\text{dim}_{k}\EXT^{n}_{\XX}(X,Y)\]
and for objects $X,Y$ of $\mathcal{D}$
\[\chi([X],[Y]):=\sum_{n\in \ZZ} (-1)^{n}\text{dim}_{k}\HOM_{\mathcal{D}}(X,Y[n]).\]
The next result clarifies the connection between these two groups and forms.

\begin{Lemma}[{\cite[Chapter 3.5]{CK09}}] \label{le:IsoGrothendieck}
Let $\mathcal{A}$ be an ext-finite abelian $k$-category. The inclusion $\mathcal{A} \rightarrow \DB(\mathcal{A})$ induces an isomorphism $K_0(\mathcal{A}) \cong K_0(\DB(\mathcal{A}))$. In addition, if the global dimension of $\mathcal{A}$ is finite, then the induced isomorphism is an isometry with respect to the respective Euler forms.
\end{Lemma}

\subsection{Exceptional sequences in $\COH(\mathbb{X})$}\label{subsec:NotMisProjLine}
An object $E$ in an abelian $k$-category $\mathcal{A}$ is called \defn{exceptional} if 
$$\END_{\mathcal{A}}(E)= k~\mbox{and}~\EXT_{\mathcal{A}}^{i}(E,E)=0~\mbox{for}~i>0,$$
and an object $E$ in a triangulated $k$-category $\mathcal{C}$ is \defn{exceptional} if $$\END_{\mathcal{C}}(E)=k~\mbox{and}~\HOM_{\mathcal{C}}(E,E[i])=0~\mbox{for}~i\neq 0.$$
A pair $(E,F)$ of exceptional objects in $\mathcal{A}$ (resp. in $\mathcal{C}$) is called \defn{exceptional pair} provided it holds in addition that $\EXT_{\mathcal{A}}^{i}(F,E)=0$ for $i\geq 0$ (resp. that $\HOM_{\mathcal{C}}(F,E[i])=0$ for all $i \in \ZZ$). A sequence $\mathcal{E}=(E_{1},\ldots,E_{r})$ of exceptional objects in $\mathcal{A}$ (resp. in $\mathcal{C}$) is called \defn{exceptional sequence of length $r$} if $\EXT_{\mathcal{A}}^{s}(E_{j},E_{i})=0$ (resp. $\HOM_{\mathcal{C}}(E_{j},E_{i}[s])=0$) for $i<j$ and all $s\in \ZZ$. Provided the rank of the Grothendieck group $K_0(\mathcal{A})$ (resp. $K_0(\mathcal{C})$) is finite an  exceptional sequence in $\mathcal{A}$ (resp. in $\mathcal{C}$) is called \defn{complete} if its length equals this rank.

\begin{remark}
  We are only interested in isomorphism classes of objects, thus the exceptional sequences will be considered as a sequence of isomorphism classes.
\end{remark}

We recall the concept of mutations of exceptional sequences  in $\DD$ as well as in $\COH(\mathbb{X})$ and the braid group action on these. For more details consider for example \cite[Chapter 2]{Bo89} and \cite[Chapter 3.2]{Mel04}.

We start by considering $\DD$. Given an exceptional pair $(E,F)$ in $\DD$, put 
$$
\HOM^{\bullet}(E,F):=\bigoplus_{i\in \ZZ}\HOM_{\DD}(E,F[i])[-i],
$$
where we consider this as a complex of vector spaces with trivial differential. The corresponding cochain complex has on position $i\in \ZZ$ the vector space $\HOM_{\DD}(E,F[-i])$. The objects $\mathcal{L}_{E}F$ and $\mathcal{R}_{F}E$ in $\DD$ defined by  the distinguished triangles
\begin{align*}
  \mathcal{L}_{E}F\rightarrow \HOM^{\bullet}(E,F)\otimes E \rightarrow F\rightarrow  \mathcal{L}_{E}F[1] \\
  E\rightarrow \HOM^{\bullet}(E,F)^{*}\otimes F \rightarrow  \mathcal{R}_{F}E\rightarrow E[1]
\end{align*}
are exceptional objects where $(-)^{*}$ denotes the complex with reverse grading. Set
$$
\HOM^{\bullet}(E,F)\otimes E :=\bigoplus_{i\in \ZZ} E[-i]^{\text{dim}_{k}\HOM_{\DD}(E,F[i])}.
$$
A left (resp. right) \defn{mutation} of an exceptional pair $(E,F)$ is the pair $( \mathcal{L}_{E}F,E)$ (resp. $(F,\mathcal{R}_{F}E)$). A mutation of an exceptional sequence $(E_{1},\ldots,E_{r})$ is defined as a mutation of a pair of adjacent objects, where all the other objects are fixed. Let $\mathcal{B}_r$ be the \defn{braid group} on $r$ strands, that is the group with (standard) generators $\sigma_{1},\ldots, \sigma_{r-1}$ and subject to the relations $\sigma_{i}\sigma_{j}=\sigma_{j}\sigma_{i}$ for $|i-j|\geq 2$ and $\sigma_{i}\sigma_{i+1}\sigma_{i}=\sigma_{i+1}\sigma_{i}\sigma_{i+1}$ for $i=1,\ldots,r-2$. The group $\mathcal{B}_r$ acts on an exceptional sequence $(E_{1},\ldots,E_{r})$ by
\begin{align*}
  \sigma_{i}(E_{1},\ldots,E_{r})      & =(E_{1},\ldots, E_{i-1},\hspace*{5pt} \phantom{\mathcal{L}_{E_{i}}}E_{i+1},\hspace*{5pt} \mathcal{R}_{E_{i+1}}E_{i},\hspace*{5pt} E_{i+2}\ldots,E_{r})   \\
  \sigma_{i}^{-1}(E_{1},\ldots,E_{r}) & =(E_{1},\ldots, E_{i-1},\hspace*{5pt} \mathcal{L}_{E_{i}}E_{i+1},\hspace*{5pt} \phantom{\mathcal{R}_{E_{i+1}}}E_{i},\hspace*{5pt} E_{i+2},\ldots,E_{r}).
\end{align*}
We consider the semidirect product $\ZZ^{r}\ltimes \mathcal{B}_r$ given by the group homomorphism $\mathcal{B}_r \rightarrow \mathfrak{S}_{r} \rightarrow \Aut_{\ZZ}(\ZZ^{r})$ which is induced by the map $\sigma_{i}\mapsto (i,i+1)$ and the natural action of the symmetric group $\mathfrak{S}_{r}$ on $\ZZ^{r}$.

The group $\ZZ^{r}\ltimes \mathcal{B}_r$ acts on the set of exceptional sequences  in $\DD$ by defining for a basis element $e_{i}\in \ZZ^{r}$
\[e_{i}(E_{1},\ldots,E_{r})=(E_{1},\ldots,E_{i-1},E_{i}[1],E_{i+1},\ldots,E_{r}).\]

Now we consider $\COH(\mathbb{X})$. Given an exceptional pair $(E,F)$ in $\COH(\mathbb{X})$, denote by $L_{E}F$ (resp. $R_{F}E$) the sheaf which coincides with $\mathcal{L}_{E}F$ (resp. $\mathcal{R}_{F}E$) up to shifts in the derived category, since $\COH(\XX)$ is hereditary (see \cite[Lemma 2.2.1]{CK09}). In this situation the sheaf $L_EF$ is uniquely determined by one of the following exact sequences
\begin{align*}
  0 & \rightarrow L_{E}F \rightarrow \HOM_{\mathbb{X}}(E, F) \otimes E \rightarrow F \rightarrow 0    \\
  0 & \rightarrow \HOM_{\mathbb{X}}(E, F) \otimes E \rightarrow F \rightarrow L_{E}F \rightarrow 0    \\
  0 & \rightarrow F \rightarrow L_{E}F \rightarrow \EXT_{\mathbb{X}}^1(E, F) \otimes E \rightarrow 0,
\end{align*}
where $V \otimes E$ is defined to be the sum of dim$(V)$ copies of $E$ for a finite dimensional vector space $V$. The sheaf $R_FE$ is defined similar. In particular we obtain an action of the braid group $\mathcal{B}_r$ on the set of exceptional sequences of length $r$ in $\COH(\mathbb{X})$ by
\begin{align*}
  \sigma_i (E_1 ,\ldots , E_r )      & = (E_1 ,\ldots , E_{i-1} , \hspace*{5pt} \phantom{L_{E_i}} E_{i+1},
  \hspace*{5pt} R_{E_{i+1}}E_i, \hspace*{5pt} E_{i+2} , \ldots , E_r),                                                                                        \\
  \sigma_i^{-1} (E_1 ,\ldots , E_r ) & = (E_1 ,\ldots , E_{i-1} , \hspace*{5pt} L_{E_i}E_{i+1}, \hspace*{5pt} \phantom{R_{E_{i+1}}}E_i, \hspace*{5pt} E_{i+2} ,
  \ldots , E_r).
\end{align*}
In the following we abbreviate the Grothendieck group $K_0(\COH(\mathbb{X}))$ by $K_0(\mathbb{X})$. Recall the following well known properties.

\begin{theorem}[{\cite[Theorem 4.3.1]{Mel04} and \cite[Theorem 1.1]{KM02}}]\label{thm:BraidGroupActionExc} 
  Let $\mathbb{X}$ be a weighted projective line and let $n$ be the rank of $K_0(\mathbb{X})$. Then the braid group $\mathcal{B}_n$ acts transitively on the isomorphism classes of complete exceptional sequences in $\COH(\mathbb{X})$.
\end{theorem}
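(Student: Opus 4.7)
The plan is to induct on the rank $n=\rk K_0(\mathbb{X})$, after first lifting the problem to the derived category $\mathcal{D}$. The natural embedding $\COH(\mathbb{X})\hookrightarrow \mathcal{D}$ sends exceptional sequences to exceptional sequences and intertwines the two braid actions up to the $\mathbb{Z}^n$-action by shifts, so it suffices to prove that $\mathbb{Z}^n\ltimes \mathcal{B}_n$ acts transitively on complete exceptional sequences in $\mathcal{D}$ and then verify that within each such orbit, any two representatives that happen to sit entirely inside $\COH(\mathbb{X})$ already lie in the same $\mathcal{B}_n$-orbit. The latter compatibility follows from the fact that the heart $\COH(\mathbb{X})\subset \mathcal{D}$ is determined by a Geigle--Lenzing tilting bundle, which pins down the overall shift normalization.

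For the induction, I would use the classification of indecomposable exceptional objects in $\COH(\mathbb{X})$: every such object is either a line bundle $\mathcal{O}(\vec{x})$, with $\vec{x}$ in the Picard group $L=L(p_1,\ldots,p_r)$, or an indecomposable torsion sheaf $S_i^{(j)}$ in one of the exceptional tubes of rank $p_i$. The base cases are handled by direct inspection using the canonical exceptional sequence $(\mathcal{O}(\vec{x}))_{0\le\vec{x}\le\vec{c}}$ of Geigle--Lenzing. The inductive step is to show that any complete exceptional sequence $(E_1,\ldots,E_n)$ can be mutated so that the last term $E_n$ becomes a prescribed reference line bundle, say $\mathcal{O}(\vec{c})$. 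Once this normalization is achieved, the first $n-1$ objects form a complete exceptional sequence in the left perpendicular category ${}^{\perp}E_n\subset\mathcal{D}$, which is triangulated equivalent to the bounded derived category of a hereditary category of smaller Grothendieck rank (a smaller weighted projective line or a module category of a squid-type canonical algebra), so the inductive hypothesis applies.

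The main obstacle is precisely the normalization of $E_n$ to a line bundle: mutations of a torsion sheaf against a line bundle (or vice versa) can produce new exceptional objects whose rank and slope must be tracked carefully via the Euler form and Serre duality on $\COH(\mathbb{X})$. The technique I would use is to introduce a complexity invariant on the set of complete exceptional sequences, for instance a weighted sum involving $\sum_i\rk(E_i)$ together with lengths of torsion summands, or the sum of homological distances between $E_i$ and the canonical tilting bundle, and to show that any sequence not already in the desired form admits a $\sigma_i^{\pm 1}$ that strictly decreases this invariant. The argument relies on two ingredients: the explicit form of the mutation triangles, which controls how $\rk$ and $\deg$ transform under $\mathcal{L}$ and $\mathcal{R}$, and a direct verification of transitivity among complete exceptional sequences consisting solely of line bundles, using the torsor structure of $L$ over $\mathbb{Z}$ and the analogous transitivity for the canonical algebra $\End_{\mathbb{X}}(T)$. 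Combining the complexity reduction with transitivity on the minimal stratum yields transitivity throughout.
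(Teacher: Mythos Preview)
The paper does not prove this theorem; it merely quotes it from \cite{Mel04} and \cite{KM02}. So there is no ``paper's own proof'' to compare against, and your task is really to outline how the cited references establish the result.

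Your inductive strategy via perpendicular categories is indeed the backbone of the Kussin--Meltzer argument, and the reduction from $\COH(\mathbb{X})$ to $\mathcal{D}$ (and back) is harmless for the reason you indicate: the abelian mutations $L_E,R_F$ agree with the triangulated ones $\mathcal{L}_E,\mathcal{R}_F$ up to shift, so the $\mathcal{B}_n$-orbits in $\COH(\mathbb{X})$ correspond exactly to the $\mathbb{Z}^n\ltimes\mathcal{B}_n$-orbits in $\mathcal{D}$.

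There is, however, a genuine error in your second paragraph. You assert that every indecomposable exceptional object in $\COH(\mathbb{X})$ is either a line bundle $\mathcal{O}(\vec{x})$ or a torsion sheaf in an exceptional tube. This is false: weighted projective lines admit exceptional vector bundles of arbitrarily high rank (already in the domestic case, and certainly in the tubular and wild cases). Mutating a line bundle against another line bundle typically produces a rank-$2$ bundle, and further mutations push the rank higher. So any complexity invariant built on ``reducing to the line-bundle stratum'' cannot simply count torsion pieces; it must genuinely control rank, and the claim that some $\sigma_i^{\pm 1}$ always decreases it is exactly the hard content of the theorem. In \cite{KM02} and \cite{Mel04} this step is handled by a careful analysis of the rank and degree of the last term under mutation, together with the structure theory of exceptional bundles developed earlier by Meltzer; it is not something one can wave through with ``a weighted sum involving $\sum_i\rk(E_i)$.'' As written, your proposal identifies the correct architecture but leaves the load-bearing lemma unproved and rests on an incorrect classification of exceptional objects.
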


\begin{corollary}[{\cite[Corollary 4.3.2]{Mel04}}]\label{cor:BraidGroupActionExc} 
  Let $\mathbb{X}$ be a weighted projective line and let $n$ be the rank of $K_0(\mathbb{X})$. Then the group $\mathcal{B}_n \ltimes \ZZ^{n}$ acts transitively on the isomorphism classes of complete exceptional sequences in $\DD$.
\end{corollary}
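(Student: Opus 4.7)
The plan is to reduce the transitivity of $\mathcal{B}_n \ltimes \ZZ^n$ on complete exceptional sequences in $\DD$ to the known transitivity of $\mathcal{B}_n$ on complete exceptional sequences in $\COH(\XX)$ given by Theorem~\ref{thm:BraidGroupActionExc}. The $\ZZ^n$-factor serves two purposes: first, to push any exceptional sequence in $\DD$ back into the heart $\COH(\XX)$, and second, to absorb the shift discrepancy between the braid action on $\DD$ and the braid action on $\COH(\XX)$.

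First I would establish the auxiliary claim that every exceptional object $E$ in $\DD$ is isomorphic to a shift $F[k]$ of some exceptional object $F \in \COH(\XX)$. Because $\COH(\XX)$ is hereditary, every object $X \in \DD$ splits as $X \cong \bigoplus_{i \in \ZZ} H^i(X)[-i]$ with $H^i(X) \in \COH(\XX)$. An exceptional $E$ has $\END_{\DD}(E) = k$, hence is indecomposable, so all but one cohomology sheaf of $E$ vanishes and $E \cong F[k]$ for some indecomposable $F \in \COH(\XX)$ and $k \in \ZZ$. The identification $\HOM_{\DD}(F[k], F[k+i]) = \EXT_{\XX}^i(F, F)$ then forces $F$ to be exceptional in $\COH(\XX)$.

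Next, given a complete exceptional sequence $\mathcal{E} = (E_1, \ldots, E_n)$ in $\DD$, write $E_i \cong F_i[k_i]$ with $F_i$ exceptional in $\COH(\XX)$, and apply $(-k_1, \ldots, -k_n) \in \ZZ^n$ to obtain $(F_1, \ldots, F_n)$. The exceptionality condition $\HOM_{\DD}(E_j, E_i[s]) = 0$ for $i < j$ and all $s \in \ZZ$ is invariant under componentwise shifts, so $(F_1, \ldots, F_n)$ is again exceptional in $\DD$; since all $F_i$ lie in the hereditary heart $\COH(\XX)$, this is exactly the condition for $(F_1, \ldots, F_n)$ to be a complete exceptional sequence in $\COH(\XX)$. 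Performing the same procedure on a second sequence $\mathcal{F}$ gives $(F_1', \ldots, F_n')$ in $\COH(\XX)$, and by Theorem~\ref{thm:BraidGroupActionExc} there exists $b \in \mathcal{B}_n$ mapping one to the other when acting inside $\COH(\XX)$.

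The remaining step is to lift this equality from $\COH(\XX)$ to $\DD$. By construction, $L_E F$ and $R_F E$ coincide with $\mathcal{L}_E F$ and $\mathcal{R}_F E$ up to shifts, so each elementary mutation of $b$ performed in $\DD$ differs from its $\COH(\XX)$-counterpart at most by a shift in one component; iterating gives $b \cdot_{\DD} (F_1, \ldots, F_n) = (F_1'[m_1], \ldots, F_n'[m_n])$ for some $\underline{m} = (m_1, \ldots, m_n) \in \ZZ^n$, which can be cancelled by the $\ZZ^n$-element $-\underline{m}$. Composing the initial shift, the braid $b$, the compensating shift $-\underline{m}$, and the final reshift to $\mathcal{F}$ produces the desired element of $\mathcal{B}_n \ltimes \ZZ^n$ carrying $\mathcal{E}$ to $\mathcal{F}$. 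The main obstacle is precisely this last bookkeeping: tracking, inside the semidirect product, how the accumulated per-coordinate shifts are generated along an iterated sequence of mutations and verifying that they can be collected into a single element of $\ZZ^n$. Once this compatibility is in hand, transitivity in $\DD$ is immediate from transitivity in $\COH(\XX)$.
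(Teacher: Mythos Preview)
Your proposal is correct and is essentially the argument the paper has in mind; indeed the paper gives no explicit proof for this corollary, treating it as an immediate consequence of Theorem~\ref{thm:BraidGroupActionExc} together with the observation (spelled out later in the proof of Corollary~\ref{le:Enlargement_D}) that exceptional sequences in $\DD$ coincide with those in $\COH(\XX)$ up to shifts. Your write-up simply makes this explicit: the $\ZZ^n$-factor brings any sequence into the heart, Theorem~\ref{thm:BraidGroupActionExc} connects any two such sequences via $\mathcal{B}_n$, and the shift discrepancy between the $\DD$- and $\COH(\XX)$-mutations is absorbed by another $\ZZ^n$-element. The only comment is that the ``bookkeeping obstacle'' you flag is not really one: the three moves you list are each elements of the semidirect product, so their composite automatically is as well, and no finer tracking of how shifts propagate through the braid word is needed.
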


The next two results state that all exceptional sequences can be enlarged to complete exceptional sequences.

\begin{Lemma}[{\cite[Lemma 3.1.3]{Mel04} and \cite[Lemma 1]{CB92}}] \label{le:Enlargement_Coh}
  Every exceptional sequence in $\COH(\mathbb{X})$ or $\MOD(A)$ for a hereditary algebra $A$ can be enlarged to a complete exceptional sequence.
\end{Lemma}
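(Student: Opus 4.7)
The plan is to proceed by induction on $n - r$, where $n = \rk K_0(\mathcal{A})$ for the ambient hereditary category $\mathcal{A}$ (either $\COH(\XX)$ or $\MOD(A)$), and $r$ is the length of the given exceptional sequence $(E_1, \ldots, E_r)$. The case $r = n$ is the desired conclusion, so for the inductive step the strategy is to produce an exceptional object $E_{r+1}$ lying in the right perpendicular subcategory
\[
(E_1, \ldots, E_r)^{\perp} := \{ X \in \mathcal{A} \mid \HOM_{\mathcal{A}}(E_i, X) = \EXT_{\mathcal{A}}^{1}(E_i, X) = 0 \text{ for all } i = 1, \ldots, r \}.
\]
Given such an $E_{r+1}$, the sequence $(E_1, \ldots, E_r, E_{r+1})$ is again exceptional of length $r+1$, and iterating the construction yields a complete exceptional sequence.

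The main technical input will be the perpendicular calculus of Geigle and Lenzing: if $E$ is an exceptional object in a hereditary abelian $k$-category of one of the two types under consideration, then $E^{\perp}$ is again an abelian hereditary $k$-category with $\rk K_0(E^{\perp}) = \rk K_0(\mathcal{A}) - 1$, and moreover $E^{\perp}$ is equivalent either to $\MOD(B)$ for a suitable hereditary $k$-algebra $B$ or to $\COH(\XX')$ for a weighted projective line $\XX'$ of smaller weight data. Applying this statement recursively, one for each $E_i$, one concludes that $(E_1, \ldots, E_r)^{\perp}$ is again a category of the same flavour and that its Grothendieck group has rank exactly $n - r$.

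Granted this structural result, producing the required $E_{r+1}$ is straightforward: if $(E_1, \ldots, E_r)^{\perp}$ is equivalent to $\MOD(B)$ for a basic hereditary algebra $B$, any simple $B$-module is exceptional; if it is equivalent to $\COH(\XX')$, the structure sheaf provides an exceptional object. Transporting back via the equivalence yields an exceptional object of $\mathcal{A}$ lying in $(E_1, \ldots, E_r)^{\perp}$, which can then be appended to the given sequence.

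The main obstacle is the perpendicular-category theorem invoked in the second paragraph: one needs to check that $E^{\perp}$ is itself abelian and hereditary, to compute the rank drop in $K_0$, and to identify the resulting category as being of one of the two allowed types. For the module-category setting this has been carried out by Crawley-Boevey \cite{CB92}, and for weighted projective lines it underlies the mutation techniques employed by Meltzer \cite{Mel04}. Once this perpendicular-calculus input is in hand, the induction above is essentially formal.
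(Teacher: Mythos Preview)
The paper does not supply its own proof of this lemma; it simply cites \cite[Lemma 3.1.3]{Mel04} and \cite[Lemma 1]{CB92}. Your outline follows the standard perpendicular-calculus argument used in those sources, and the overall strategy is correct.

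There is, however, a directional slip. With the paper's convention, $(E_1,\ldots,E_r)$ is exceptional when $\EXT^s_{\mathcal A}(E_j,E_i)=0$ for $i<j$; that is, morphisms only go \emph{forward}. Thus, to append an object $E_{r+1}$ at the \emph{end} and obtain an exceptional sequence $(E_1,\ldots,E_r,E_{r+1})$, you need $\EXT^s_{\mathcal A}(E_{r+1},E_i)=0$ for all $i\le r$, i.e.\ $E_{r+1}\in{}^{\perp}(E_1,\ldots,E_r)$, the \emph{left} perpendicular. An object taken from the right perpendicular $(E_1,\ldots,E_r)^{\perp}$, as you wrote it, should instead be \emph{prepended}: the sequence $(E_{r+1},E_1,\ldots,E_r)$ is then exceptional. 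Either fix works and the perpendicular-calculus input you invoke (due to Geigle--Lenzing, Schofield, and used throughout \cite{Mel04}) applies equally well to left perpendiculars. Once this is corrected, the induction goes through as you describe.
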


\begin{corollary}\label{le:Enlargement_D}
  Every exceptional sequence in $\DD$ can be enlarged to a complete exceptional sequence.
\end{corollary}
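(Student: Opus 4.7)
The plan is to reduce the statement to Lemma \ref{le:Enlargement_Coh} by exploiting the fact that $\COH(\XX)$ is hereditary and that the shift functor is an autoequivalence of $\DD$.

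First I would record two elementary observations. (i) If $(E_1,\ldots,E_r)$ is an exceptional sequence in $\DD$, then so is $(E_1[n_1],\ldots,E_r[n_r])$ for any integers $n_i\in\ZZ$, because
\[
\HOM_{\DD}(E_j[n_j],E_i[n_i+s])=\HOM_{\DD}(E_j,E_i[s+n_i-n_j])=0
\]
for all $i<j$ and all $s\in\ZZ$, and similarly $\HOM_{\DD}(E_i[n_i],E_i[n_i+s])=0$ for $s\neq 0$ with endomorphism ring $k$. (ii) Since $\COH(\XX)$ is hereditary, every indecomposable object of $\DD=D^b(\COH(\XX))$ is isomorphic to a shift $F[n]$ of an object $F\in\COH(\XX)$; in particular this applies to exceptional objects of $\DD$.

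Now let $(E_1,\ldots,E_r)$ be an exceptional sequence in $\DD$. By (ii), choose integers $n_1,\ldots,n_r$ and objects $F_i\in\COH(\XX)$ with $E_i\cong F_i[-n_i]$, so that after applying the element $(n_1,\ldots,n_r)\in\ZZ^r$ of the action described in Section~\ref{sec:RootSystemDerivedCategory} we obtain the exceptional sequence $(F_1,\ldots,F_r)$ in $\DD$ whose entries lie in $\COH(\XX)$. I would then check that $(F_1,\ldots,F_r)$ is exceptional in $\COH(\XX)$: since $\COH(\XX)$ is hereditary, $\EXT_\XX^s=0$ for $s\geq 2$ automatically, and for $i<j$ we have $\HOM_\XX(F_j,F_i)=\HOM_\DD(F_j,F_i)=0$ and $\EXT_\XX^1(F_j,F_i)=\HOM_\DD(F_j,F_i[1])=0$ by the exceptionality in $\DD$; the endomorphism/Ext condition on each $F_i$ follows analogously.

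Applying Lemma \ref{le:Enlargement_Coh}, enlarge $(F_1,\ldots,F_r)$ to a complete exceptional sequence $(F_1,\ldots,F_n)$ in $\COH(\XX)$, where $n=\rk K_0(\COH(\XX))=\rk K_0(\DD)$. Any exceptional sequence in $\COH(\XX)$ is exceptional in $\DD$ (the vanishing of $\EXT^s$ for all $s\geq 0$ in the abelian category translates into the vanishing of $\HOM_\DD(-,-[s])$ for all $s\in\ZZ$, using that $\EXT^s=0$ for $s<0$ and for $s\geq 2$), so $(F_1,\ldots,F_n)$ is a complete exceptional sequence in $\DD$. Finally, applying observation (i) to shift the first $r$ entries back by $-n_i$, the sequence
\[
(F_1[-n_1],\ldots,F_r[-n_r],F_{r+1},\ldots,F_n)=(E_1,\ldots,E_r,F_{r+1},\ldots,F_n)
\]
is a complete exceptional sequence in $\DD$ extending the original one. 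The only step requiring care is observation (ii); the rest is essentially bookkeeping about shifts.
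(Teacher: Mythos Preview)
Your proof is correct and follows essentially the same approach as the paper: the paper's proof is the one-line observation that the exceptional sequences of $\COH(\XX)$ coincide with those of $\DD$ up to shifts, after which Lemma~\ref{le:Enlargement_Coh} applies directly. You have simply unpacked this sentence carefully, verifying in particular that shifting entries preserves exceptionality and that hereditariness lets you realize each entry as a shift of an object of $\COH(\XX)$.
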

\begin{proof}
  The exceptional sequences of $\COH(\XX)$ coincide  with those of $\DD$ up to shifts. Therefore the assertion follows directly from Lemma \ref{le:Enlargement_Coh}.
\end{proof}
The next lemma is a combination of the results \cite[Proposition 4.1]{GL87} and \cite[Lemma 8.1.2]{Mel04}.

\begin{Lemma}\label{le:Enlargement_DD}
  The category $\COH(\XX)$ of coherent sheaves over a weighted projective line $\XX$ contains a complete exceptional sequence.
\end{Lemma}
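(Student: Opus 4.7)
The plan is to exhibit an explicit complete exceptional sequence in $\COH(\XX)$ by organizing the summands of a tilting bundle into a linear order compatible with $\EXT^{1}$-vanishing. Recall from \cite{GL87} that to a weighted projective line $\XX$ with weight data $(p_{1},\ldots,p_{t})$ is associated a rank-one abelian group $L = L(\vec{p})$ with generators $\vec{x}_{1},\ldots,\vec{x}_{t}$ subject to $p_{i}\vec{x}_{i} = \vec{c}$ (the canonical element), and that $\COH(\XX)$ contains the distinguished family of line bundles $\mathcal{O}(\vec{x})$, $\vec{x}\in L$. The key input from \cite[Proposition 4.1]{GL87} is that
\[ T \;:=\; \bigoplus_{0 \,\leq\, \vec{x} \,\leq\, \vec{c}} \mathcal{O}(\vec{x}) \]
is a tilting bundle on $\XX$; in particular every summand is exceptional, one has $\EXT^{1}_{\XX}(\mathcal{O}(\vec{x}),\mathcal{O}(\vec{y})) = 0$ whenever $\vec{x} \leq \vec{y}$, and the total number of summands is $n := 2 + \sum_{i=1}^{t}(p_{i}-1) = \rk K_{0}(\XX)$.

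Next, I would invoke the standard tilting-to-exceptional-sequence argument recorded in \cite[Lemma 8.1.2]{Mel04}: if the pairwise non-isomorphic indecomposable summands of a tilting object in a hereditary category are exceptional and their $\EXT^{1}$-vanishings are governed by a partial order, then any linear extension of the \emph{opposite} order arranges them into an exceptional sequence. Concretely, pick a total ordering $\vec{x}^{(1)},\ldots,\vec{x}^{(n)}$ of the interval $[0,\vec{c}]\subset L$ refining the reverse of the natural partial order on $L$, and set $E_{k} := \mathcal{O}(\vec{x}^{(k)})$. Each $E_{k}$ is exceptional; the required vanishing $\EXT^{s}_{\XX}(E_{j},E_{i}) = 0$ for $i < j$ and all $s\geq 0$ is automatic for $s\geq 2$ since $\COH(\XX)$ is hereditary, holds for $s=0$ by the $L$-graded structure on morphism spaces (there are no nonzero maps $\mathcal{O}(\vec{y}) \to \mathcal{O}(\vec{x})$ when $\vec{x} \not\geq \vec{y}$), and reduces for $s=1$ precisely to the one-way $\EXT^{1}$-vanishing from \cite[Proposition 4.1]{GL87}. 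The length $n$ then matches $\rk K_{0}(\XX)$, so $(E_{1},\ldots,E_{n})$ is the sought complete exceptional sequence.

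The only substantive step is the tilting property itself, which rests on Serre duality on $\XX$ together with the explicit identification of $\HOM_{\XX}(\mathcal{O}(\vec{x}),\mathcal{O}(\vec{y}))$ with the $(\vec{y}-\vec{x})$-homogeneous component of the $L$-graded homogeneous coordinate ring of $\XX$; I would simply quote this from \cite[Proposition 4.1]{GL87}. Thus the proof of the lemma really is a combination of that proposition (the Hom/Ext-computation for line bundles) with \cite[Lemma 8.1.2]{Mel04} (the reordering of a tilting object into an exceptional sequence), as indicated in the statement.
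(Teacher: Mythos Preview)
Your approach is exactly the one the paper indicates: the lemma is stated there simply as a combination of \cite[Proposition 4.1]{GL87} (the canonical tilting bundle $T=\bigoplus_{0\le\vec{x}\le\vec{c}}\mathcal{O}(\vec{x})$) and \cite[Lemma 8.1.2]{Mel04}, with no further argument. So in spirit you are reproducing the intended proof.

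There is, however, a direction slip in your ordering. Since $T$ is tilting, one has $\EXT^1_{\XX}(\mathcal{O}(\vec{x}),\mathcal{O}(\vec{y}))=0$ for \emph{all} pairs $\vec{x},\vec{y}\in[0,\vec{c}]$, not just for $\vec{x}\le\vec{y}$; the relevant partial order is therefore detected by $\HOM$, not by $\EXT^1$. Concretely, $\HOM_{\XX}(\mathcal{O}(\vec{y}),\mathcal{O}(\vec{x}))\cong S_{\vec{x}-\vec{y}}$ is nonzero precisely when $\vec{x}\ge\vec{y}$. With the paper's convention that an exceptional sequence requires $\EXT^s_{\XX}(E_j,E_i)=0$ for $i<j$, you must choose a linear extension of the \emph{natural} order on $[0,\vec{c}]$ (so $E_1=\mathcal{O}$, $E_n=\mathcal{O}(\vec{c})$), not of its opposite: with your reversed ordering one would have $\vec{x}^{(i)}\ge\vec{x}^{(j)}$ for some $i<j$ and hence $\HOM_{\XX}(E_j,E_i)\neq 0$. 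Once you flip the order, the $s=0$ case follows from the Hom computation, the $s=1$ case from the tilting property (two-sided $\EXT^1$-vanishing), and $s\ge 2$ from heredity, exactly as you outline.
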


\subsection{The extended root system and the extended Weyl group attached to $\XX$ }\label{subsec:SimplyLacedGenRoot}

We consider the following quadruple
$$
  Q(\DD):=(K_0(\mathcal{D}), \chi^{s}, \Delta_{\text{re}}(\DD), c_{\DD}),
$$
which we call the \defn{root quadruple related to $\DD$}, where
\begin{itemize}
  \item $K_0(\DD)$ is the Grothendieck group of $\DD$;
  \item $\chi^{s}([X],[Y]):= \chi([X],[Y]) + \chi([Y],[X])$, where $\chi$ is the Euler form on $K_0(\DD)$ and where $[X]$ (resp. $[Y]$) denotes the class in $K_0(\DD)$ of the object $X \in \DD$ (resp. $Y\in \DD$);
  \item $\Delta_{\text{re}}(\DD):= W(B)B \subseteq K_0(\DD)$, where $(E_1, \ldots , E_n)$ is a complete exceptional sequence,
 $$B:=\{ [E_1], \ldots , [E_n] \}$$ 
 a basis of $K_0(\mathcal{D})$ and where $W(B)$ is the subgroup of \linebreak $\text{Aut}(K_0(\DD), \chi^{s})$ that is generated by the reflections $s_{[E_i]}$ (see Section \ref{subsec:notation}) on  $K_0(\DD)$ (for $1 \leq i \leq n$);
  \item the \defn{Coxeter transformation} $c_{\DD}$ is the automorphism on $K_0(\DD)$ induced by the Coxeter functor $\mathcal{C}_{\DD}:=\mathcal{S}_{\DD}[-1]$ on $\DD$ where $\mathcal{S}_{\DD}$ is the Serre functor on $\DD$.
\end{itemize}

Furthermore, we define the group
$$
  W(\DD):= W_{\Delta_{\text{re}}(\DD)} = \langle s_{\alpha} \in \text{Aut}(K_0(\DD), \chi^{s}) \mid \alpha \in \Delta_{\text{re}}(\DD) \rangle
$$
(see Section \ref{subsec:notation}). By \cite[Lemma 4.1.2]{Mel04} all complete exceptional sequences in $\mathcal{D}=D^b(\COH(\mathbb{X}))$ are full in the sense of \cite{STW16}. Therefore, because of Corollaries~\ref{cor:BraidGroupActionExc} and \ref{le:Enlargement_D}, the conditions of \cite[Proposition~2.10]{STW16} on $\mathcal{D}=D^b(\COH(\mathbb{X}))$ are satisfied, which yields that $Q(\DD)$ fulfills the conditions of \cite[Definition~2.1]{STW16}.

Both, the definition of the root quadruple as well as the definition of the group $W(\DD)$ do not depend on the choice of the complete exceptional sequence $(E_1, \ldots , E_n)$ (see \cite[Proposition 2.10]{STW16}). Notice that the Coxeter transformation can also be obtained from a complete exceptional sequence:

\begin{Lemma}[{\cite[Section 2]{STW16}}] \label{le:InfoRootSystem} 
  Let $\DD$, $B$ and $(E_1, \ldots , E_n)$ be as above. Then it holds
  \begin{itemize}
    \item[(a)] $c_{\DD} = s_{[E_1]} \cdots s_{[E_n]}$;
    \item[(b)] $\chi^{s}(\alpha, \alpha)=2$ for all $\alpha \in \Delta_{\text{re}}(\DD)$; and
    \item[(c)] $W(\DD)= W(B)$.
  \end{itemize}
\end{Lemma}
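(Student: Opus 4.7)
The plan is to verify the three claims by unpacking the categorical definitions; all three statements are essentially encoded in \cite[Section~2]{STW16}, so the task is really to translate between the categorical objects and the data of the root quadruple.

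I would start with (b), which is the easiest. Since each $E_i$ is exceptional one has $\END_{\DD}(E_i) = k$ and $\HOM_{\DD}(E_i,E_i[j]) = 0$ for $j \neq 0$, hence $\chi([E_i],[E_i]) = 1$ and therefore $\chi^{s}([E_i],[E_i]) = 2$. A direct check shows that a reflection of the form $s_\alpha(x) = x - \chi^{s}(x,\alpha)\alpha$ with $\chi^{s}(\alpha,\alpha) = 2$ preserves $\chi^{s}$. Consequently every $w \in W(B)$ preserves $\chi^{s}$, and the value $2$ propagates from the basis $B$ to the full orbit $\Delta_{\text{re}}(\DD) = W(B)B$.

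For (c) the inclusion $W(B) \subseteq W(\DD)$ is clear since $B \subseteq \Delta_{\text{re}}(\DD)$. For the reverse inclusion, any $\alpha \in W(B)B$ can be written as $\alpha = w[E_i]$ for some $w \in W(B)$ and some $i$; because $w$ preserves $\chi^{s}$, the standard identity $s_{w[E_i]} = w\, s_{[E_i]}\, w^{-1}$ holds, which places $s_\alpha$ inside $W(B)$ and yields $W(\DD) \subseteq W(B)$.

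The main content is (a). Here I would use the description of $c_{\DD}$ as the linear automorphism induced by $\mathcal{C}_{\DD} = \mathcal{S}_{\DD}[-1]$ on $K_0(\DD)$ and compare $c_{\DD}$ with $s_{[E_1]} \cdots s_{[E_n]}$ on the basis $B$. On one side, Serre duality provides an identity relating $\chi$ to the Serre automorphism on $K_0(\DD)$, giving a closed formula for the action of $c_{\DD}$ on each $[E_i]$. On the other side, the action of the product of reflections on $[E_i]$ can be computed iteratively by exploiting the exceptional-sequence vanishings $\HOM_{\DD}(E_j,E_i[s]) = 0$ for $j > i$ and all $s$, which force $\chi([E_j],[E_i]) = 0$ and kill most terms of the expansion, leaving exactly the expression produced by $c_{\DD}$. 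The main obstacle is the careful sign- and shift-bookkeeping in the Serre-duality step; rather than redo this computation I would cite the derivation in \cite[Section~2]{STW16}, where the identification is carried out in the general triangulated setting, and specialize it to our quadruple $\mathcal{Q}(\DD)$.
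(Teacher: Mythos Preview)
Your proposal is correct. Note that the paper itself does not supply a proof of this lemma at all: it is stated with a bare citation to \cite[Section~2]{STW16} and nothing further. Your write-up therefore goes beyond what the paper does, giving self-contained arguments for (b) and (c) and a sketch plus citation for (a). The arguments you give for (b) and (c) are the standard ones (exceptionality forces $\chi^{s}([E_i],[E_i])=2$, reflections along vectors of norm~$2$ are isometries, and the conjugation identity $s_{w\alpha}=ws_\alpha w^{-1}$ for isometries $w$), and they are exactly what underlies the STW16 treatment; for (a) you correctly identify that the real work is the Serre-duality bookkeeping and defer to the reference, which is all the paper does as well.
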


Analogously to the category $\DD$, we consider the quadruple
\[Q(\XX):=(K_{0}(\XX),\chi^{s},\Delta_{\text{re}}(\XX),c),\]
which we call the \defn{root quadruple related to $\XX$}, where
  \begin{itemize}
    \item $K_0(\XX)$ is the Grothendieck group of $\COH(\XX)$;
    \item $\chi^{s}([X],[Y]):= \chi([X],[Y]) + \chi([Y],[X])$ where $\chi$ is the Euler form on $K_0(\XX)$ and $[X]$ (resp. $[Y]$) denotes the class in $K_0(\XX)$ of an object $X \in \COH(\XX)$ (resp. $Y\in \COH(\XX)$)
        (for the definition of the Euler form see Section~\ref{subsec:IntroCategories});
    \item $\Delta_{\text{re}}(\XX):= W(B)B \subseteq K_0(\XX)$ where
          $$B:=\{ [E_1], \ldots , [E_n] \}$$
          for a complete exceptional sequence $(E_1, \ldots , E_n)$ and 
          $W(B)$ is the subgroup of \linebreak $\Aut(K_0(\XX), \chi^{s})$ generated by the reflections
          $s_{[E_i]}$ on $K_0(\XX)$;
    \item  $W(B)=W(\XX):= W_{\Delta_{\text{re}}(\XX)}$;
    \item $c$ is the Coxeter transformation of $W(\XX)$, that is, the automorphism of $K_0(\XX)$ defined as $c=s_{[E_1]}\ldots s_{[E_n]}$.
  \end{itemize}
The root quadruple $Q(\XX)$ also fulfills the conditions of \cite[Definition~2.1]{STW16}. For our purpose, there is no need to distinguish between $Q(\DD)$ and $Q(\XX)$, as the following result shows.
  
\begin{corollary}\label{cor:genrootsys}
There is an isometry  between the $\Z$-modules $K_{0}(\XX)$ and $K_0(\DD)$ that sends $\Delta_{\text{re}}(\XX)$ to $\Delta_{\text{re}}(\DD)$. The latter also induces an isomorphism from $W(\XX)$ to $W(\DD)$ which maps $c$ to $c_{\DD}$.
\end{corollary}

\begin{proof}
The exceptional sequences of $\COH(\XX)$ and $\DD$ coincide up to shifts. Every shift by $[1]$ or $[-1]$ yields a change of sign in the Grothendieck group, thus the corresponding elements in the group coincide up to sign, where $K_{0}(\XX)$ is identified with $K_{0}(\DD)$ by Lemma \ref{le:IsoGrothendieck}. Thus the corresponding Weyl groups are equal, which yields that the sets of roots $\Delta_{\text{re}}(\DD)$ and $\Delta_{\text{re}}(\XX)$ coincide. In particular, the Coxeter transformations of $W(\XX)$ and $W(D)$ are equal.
\end{proof}

Notice that our notation is differemt from that one in \cite{STW16}. In Section \ref{sec:root_star_quiver} we show that $\Delta_{\text{re}}(\XX)$ and $W(\XX)$ are indeed an extended root system and an extended Weyl group in the sense of Definition \ref{def:basic}. Therefore we call the set $\Delta_{\text{re}}(\XX)$ the \defn{extended root system related to $\XX$} and $W(\XX)$ the \defn{extended Weyl group related to $\XX$}.

\begin{remark} \label{rem:redcry}
Let $S: = \{ s_{[E_1]}, \ldots , s_{[E_n]} \}$. By definition of the Euler form we have\linebreak $\chi^s([E],[E])=2$ for each exceptional $E \in \COH(\XX)$. Since the group $W(\XX)$ leaves the form $\chi^s(-,-)$ invariant, the extended root system $\Delta_{\text{re}}(\XX)$ is simply-laced. In particular, it is reduced and crystallographic.
\end{remark}

\subsection{The reflection related to an exceptional object} \label{subsec:ExSequence}
In this section we show that an exceptional object in $\COH(\XX)$ uniquely determines a reflection, and we describe the action of such a  reflection on the extended root system related to $\XX$ as well as to $\DD$.

We start by recalling that every exceptional object in $\COH(\XX)$ is uniquely determined by its class in the Grothendieck group.

\begin{Lemma}[{\cite[Proposition 4.4.1]{Mel04}}] \label{le:ClassUniquely}
Let $E,F \in \COH(\mathbb{X})$ be exceptional and $[E]=[F]$ in $K_0(\mathbb{X})$. Then $E \cong F$.
\end{Lemma}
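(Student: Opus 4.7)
The plan is to combine the hereditary structure of $\COH(\XX)$ with the indecomposability of exceptional objects.

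First, since $\COH(\XX)$ is hereditary, $\EXT^{i}_{\XX}(-,-)$ vanishes for $i \geq 2$, so the Euler form reduces to $\chi(X,Y) = \dim_{k}\HOM_{\XX}(X,Y) - \dim_{k}\EXT^{1}_{\XX}(X,Y)$. Because $\chi$ factors through $K_{0}(\XX)$, the hypothesis $[E]=[F]$ gives $\chi(E,F) = \chi(E,E) = 1$ and $\chi(F,E) = \chi(F,F) = 1$. Non-negativity of $\EXT^{1}$-dimensions therefore forces $\HOM_{\XX}(E,F) \neq 0$ and $\HOM_{\XX}(F,E) \neq 0$; I would choose nonzero morphisms $f \colon E \to F$ and $g \colon F \to E$.

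Next, I would analyze the compositions $gf \in \END_{\XX}(E) = k$ and $fg \in \END_{\XX}(F) = k$. If either composition is nonzero, it is a nonzero scalar multiple of the identity, which exhibits $f$ (respectively $g$) as a split monomorphism. Since the endomorphism rings of $E$ and $F$ equal the field $k$ and are in particular local, both objects are indecomposable, and a split monomorphism into an indecomposable object must be an isomorphism; hence $E \cong F$.

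The remaining, and hardest, task is to rule out the pathological situation in which $gf = 0$ and $fg = 0$ for every choice of nonzero $f, g$. For this I would leave the purely K-theoretic framework and invoke slope-stability for $\COH(\XX)$: every exceptional sheaf on a weighted projective line is stable with respect to the natural slope function $\mu$ (combining rank and degree), and $[E]=[F]$ implies in particular $\mu(E) = \mu(F)$. The standard Schur-type argument for stable objects then completes the proof: if $f \colon E \to F$ were a nonzero non-isomorphism, the image $\IM(f)$ would be a proper quotient of $E$ and a proper subobject of $F$, so stability would yield $\mu(E) < \mu(\IM(f)) < \mu(F)$, contradicting $\mu(E) = \mu(F)$. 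Hence $f$ is an isomorphism. The technical heart of this plan is the slope-stability input, which goes beyond the formal hereditary/Euler-form setup and relies on the explicit geometric structure of $\COH(\XX)$ recalled in \cite{GL87}.
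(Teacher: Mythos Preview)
The paper does not prove this lemma; it simply quotes \cite[Proposition 4.4.1]{Mel04}. Your outline follows the standard route and is essentially how the result is obtained for exceptional vector bundles: slope-stability of exceptional bundles on $\XX$ together with the Schur-type argument does the job. Once you commit to stability, the intermediate analysis of $gf$ and $fg$ becomes redundant: the single nonzero $f\colon E\to F$ produced by the Euler-form computation is already forced to be an isomorphism.

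One point needs care. Exceptional torsion sheaves all have rank zero, hence slope $\infty$, so the strict-inequality argument you give does not separate them. Since $[E]=[F]$ forces equal rank, and every indecomposable sheaf on $\XX$ is either torsion or a vector bundle (the torsion--torsionfree sequence splits by Serre duality), you should split into cases. Your argument handles bundles; for torsion sheaves one instead uses the tube structure: an exceptional torsion sheaf lies in an exceptional tube of rank $p_i$ and has length $<p_i$, and its $K_0$-class records its composition factors, which for an indecomposable of length $<p_i$ determines it up to isomorphism. This is a minor gap rather than a wrong idea, but it is exactly the kind of case distinction Meltzer's proof makes explicit.
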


\begin{Lemma}[{\cite[Lemma 2.13]{STW16}}] \label{le:ExcObjectIsRoot}
  For every exceptional object $E \in \COH(\XX)$ (resp. $E \in \DD$), the class $[E] \in K_0(\XX)$ (resp. $[E] \in K_0(\DD)$) belongs to  $\Delta_{\text{re}}(\XX)$ (resp. $\Delta_{\text{re}}(\DD)$).
\end{Lemma}
Notice that the roots of the form $[E]$ for an exceptional object $E$ are called \defn{(real) Schur roots}.

\begin{Lemma} \label{le:Uniqueness}
The map sending an isomorphism class of an exceptional object $E$ from $\COH(\XX)$ to the reflection $s_{[E]}$ from $W(\XX)$ is injective.
\end{Lemma}

\begin{proof}
By Lemma \ref{le:ExcObjectIsRoot} the map is well-defined. Moreover, since $\Delta_{\text{re}}(\XX)$ is reduced,  if  $E$ and $F$ are two exceptional objects  with $s_{[E]}=s_{[F]}$, then $[E]=\pm [F]$. In this case \cite[Proposition 6.3.7 (2)]{CK09} yields that $[E]=[F]$ and Lemma \ref{le:ClassUniquely} implies $E\cong F$.
\end{proof}

\begin{Lemma} \label{le:BraidCompatibility}
Let $(E,F)$ be an exceptional pair in $\DD$, then 
$$s_{[E]}([F]) =- [\mathcal{L}_{E}F]~\mbox{and}~s_{[F]}([E])=-[\mathcal{R}_{F}E],$$
and if $(E,F)$ is an exceptional pair in $\COH(\mathbb{X})$, then 
$$ s_{[E]}([F]) = \pm  [ L_EF]~\mbox{ and }~ s_{[F]}([E]) = \pm [R_FE].$$
\end{Lemma}

\begin{proof}
Consider the exceptional pair $(E,F)$ in $\DD$ and the following distinguished triangles
\begin{align*}
    \mathcal{L}_{E}F\rightarrow \HOM^{\bullet}(E,F)\otimes E \rightarrow F\rightarrow  \mathcal{L}_{E}F[1] \\
    E\rightarrow \HOM^{\bullet}(E,F)^{*}\otimes F \rightarrow  \mathcal{R}_{F}E\rightarrow E[1].
\end{align*}
In the Grothendieck group $K_{0}$ it holds, as $\HOM(F,E[i])=0$ for all $i\in \ZZ$, that
$$[\HOM^{\bullet}(E,F)\otimes E]=\sum_{i\in \ZZ} (-1)^{i} \dim_{k}\HOM_{\DD}(E,F[i])[E]=\chi^{s}([E],[F])[E].$$
Thus the first triangle yields
\[[\mathcal{L}_{E}F]=-\big( [F]- \chi^{s}([E],[F]) [E] \big)=-s_{[E]}([F])\]
and the second yields
\[[\mathcal{R}_{F}E]=-\big( [E]- \chi^{s}([E],[F]) [F] \big)=-s_{[F]}([E]).\]
As the exceptional sequences of $\COH(\XX)$ coincide up to shifts with those of $\DD$ the second part of the lemma follows.
\end{proof}

\subsection{An extended Coxeter-Dynkin diagram for $\XX$} \label{sec:root_star_quiver}
The aim of this section is to show that for a weighted projective line $\XX$ we always find a complete exceptional sequence $\mathcal{E}$ in $\COH(\XX)$ such that the associated diagram (see Definition \ref{def:GenCoxDiagram}) is an extended Coxeter-Dynkin diagram. Our strategy is as follows: we consider the module category of a bound quiver algebra whose quiver is the one point extended star quiver. We construct a complete exceptional sequence in this category that induces an extended star diagram. As the latter module category is derived equivalent to $\COH(\XX)$  for a weighted projective line $\XX$ by \cite{STW16}, we thereby obtain the desired diagram. It will provide a nice description of the corresponding extended root system $\Delta_{\text{re}}(\XX)$.

\begin{Definition} \label{def:GenCoxDiagram}
We associate to every complete exceptional sequence $\mathcal{E}=(E_1, \ldots , E_n)$  in $\COH(\XX)$ (resp. in $\DD$) a diagram whose set of vertices is in bijection with $\{E_1, \ldots ,E_n\}$. Let $E_i$ and $E_j$ be two different exceptional objects, that is $i \neq j$. If $\chi^s([E_i] , [E_j])= -k$ with $k\in \NN$ we draw $k$ edges between the corresponding vertices, and these edges are dotted if $\chi^s([E_i] , [E_j])= k$. If $\chi^s([E_i] , [E_j])= 0$ we do not draw an edge between the corresponding vertices. As these diagrams are equal for $\COH(\XX)$ and $\DD$ according to Corollary~\ref{cor:genrootsys} we simply call it the \defn{generalized Coxeter-Dynkin diagram} associated to $(\XX, \mathcal{E})$. 
\end{Definition}

Recall the following concrete definition of a weighted projective line (see \cite{CK09}). Let  $\PP_k^1$ be the projective line over $k$, let $\lambda = (\lambda_1, \ldots, \lambda_r)$ be a (possibly empty) collection of distinct closed points of $\PP_k^1$, and let $p = (p_1, \ldots , p_r)$  be a weight sequence, that is, a sequence of positive integers. The triple $\XX = (\mathbb{P}_k^1, \lambda, p)$ is called a \defn{weighted projective line}. Without loss we may assume that $\lambda$ is normalized, that is $\lambda_1 = \infty, \lambda_2 = 0$ and $\lambda_3 = 1$. 

\begin{Definition} \label{def:bound_quiver} 
Let $r\geq 3$ be a positive integer and let $\XX = (\PP_k^1, \lambda, p)$ be a weighted projective line (see Section~\ref{sec:RootSystemDerivedCategory}). Set
$$(\lambda_{1}^{(1)},\lambda_{1}^{(2)})=(1,0)~\mbox{and}~ (\lambda_{i}^{(1)},\lambda_{i}^{(2)})=(\lambda_{i},1),~\mbox{ for}~ i=2,\ldots,r.$$

Further let $k\widetilde{\TT}_{p}$ be the quiver algebra corresponding to the one point extended star quiver given in Figure \ref{def:Quiver_one}. Then define the \defn{bound quiver algebra} to be
$$k\widetilde{\TT}_{p,\lambda}:=k\widetilde{\TT}_{p}/\mathcal{I}$$ 
where $\mathcal{I}$ is the ideal
$$\mathcal{I}:=\big( \sum_{i=1}^{r} \lambda^{(1)}_{i} f_{i,1^{*}}f_{1,i}, \sum_{i=1}^{r} \lambda^{(2)}_{i} f_{i,1^{*}}f_{1,i}  \big)$$ 
and where $f_{1,i}$ is the arrow from $1$ to $(i,1)$ and $f_{i,1^{*}}$ the arrow from $(i,1)$ to $1^{*}$.
\end{Definition}

\begin{remark}
The one point extended star quiver is a finite acyclic quiver, which yields the admissibility of the ideal $\mathcal{I}$, that is, there exists a positive integer $m$ such that $A^{m}\subseteq \mathcal{I} \subseteq A^{2}$ for $A$ the two-sided ideal generated by the arrows of the quiver.
\end{remark}

To state an explicit formula of the symmetrized Euler form attached to $k\widetilde{\TT}_{p,\lambda}$ we need the following property.

\begin{Lemma}\label{lem:gl.dim}
The global dimension of the bound quiver algebra $k\widetilde{\TT}_{p,\lambda}$ is two.
\end{Lemma}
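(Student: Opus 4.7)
The plan is to compute projective resolutions of the simple modules over $k\widetilde{\TT}_{p,\lambda}$ vertex by vertex, exploiting the fact that the only relations are quadratic and all start at the central vertex $1$. Since $k\widetilde{\TT}_{p,\lambda}$ is a finite-dimensional bound quiver algebra on an acyclic quiver, the global dimension equals the supremum of the projective dimensions of the simple modules $S_v$, so it suffices to bound each such $\mathrm{pd}(S_v)$ by $2$ and to exhibit one simple where equality holds.

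First I would handle the vertices $v \neq 1$. The sink vertex $1^{*}$ has no outgoing arrows, hence $S_{1^{*}} = P_{1^{*}}$ and $\mathrm{pd}(S_{1^{*}}) = 0$. For an arm vertex $(i,j)$ I would observe that no relation in the admissible ideal $\mathcal{I}$ begins at $(i,j)$, so the natural surjection $\bigoplus_{a} P_{t(a)} \to \mathrm{rad}(P_{(i,j)})$ (the sum running over the arrows $a$ out of $(i,j)$) is an isomorphism. In particular $\mathrm{rad}(P_{(i,j)})$ is projective and $\mathrm{pd}(S_{(i,j)}) \le 1$.

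The main case is $v = 1$, and this is where I expect the work to lie. Let $P_1$ be the indecomposable projective at $1$; its radical has top $\bigoplus_{i=1}^{r} S_{(i,1)}$ because there are exactly $r$ arrows $f_{1,i}$ leaving $1$. Mapping $\bigoplus_{i=1}^{r} P_{(i,1)} \twoheadrightarrow \mathrm{rad}(P_1)$ via $e_{(i,1)} \mapsto f_{1,i}$ and extending by the bimodule structure, the kernel is generated precisely by the lifts of the two defining relations, namely the elements $\sum_{i=1}^{r} \lambda_i^{(1)} f_{i,1^{*}}$ and $\sum_{i=1}^{r} \lambda_i^{(2)} f_{i,1^{*}}$ sitting in the $1^{*}$-components of $\bigoplus_{i} P_{(i,1)}$. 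Since the pairs $(\lambda_i^{(1)},\lambda_i^{(2)})$ are pairwise non-proportional (by the normalisation in Definition~\ref{def:bound_quiver}), these two elements are linearly independent, and they generate a submodule isomorphic to $P_{1^{*}}^{2}$ because $1^{*}$ is a sink so $P_{1^{*}}$ is simple. This yields the length-two minimal projective resolution
\begin{equation*}
0 \longrightarrow P_{1^{*}}^{2} \longrightarrow \bigoplus_{i=1}^{r} P_{(i,1)} \longrightarrow P_{1} \longrightarrow S_{1} \longrightarrow 0,
\end{equation*}
so $\mathrm{pd}(S_{1}) = 2$.

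The technical subtlety I would watch is step three: one must check that the two displayed cycles really generate the entire kernel and not just a submodule of it. I would do this by a dimension count, computing $\dim P_{1}$, $\dim \mathrm{rad}(P_{1})$, and $\sum_{i} \dim P_{(i,1)}$ directly from the paths-modulo-relations description (noting that of the $r$ paths of length two from $1$ to $1^{*}$ only $r-2$ survive in $P_1$), and verifying that the kernel has dimension exactly $2 = \dim P_{1^{*}}^{2}$. Combining the three cases gives $\mathrm{gl.dim}\, k\widetilde{\TT}_{p,\lambda} = \max_{v} \mathrm{pd}(S_v) = 2$.
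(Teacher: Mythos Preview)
Your proposal is correct and follows essentially the same strategy as the paper: both compute the global dimension as the maximum projective dimension of the simple modules $S_v$, treat the vertices $v\neq 1$ (where no relation starts) separately from the central vertex $1$, and arrive at the same minimal projective resolution $0\to P_{1^*}^{2}\to\bigoplus_i P_{(i,1)}\to P_1\to S_1\to 0$.

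The only methodological difference is in the bookkeeping. The paper establishes the lower bound $\mathrm{gl.dim}\ge 2$ by quoting Bongartz \cite{Bo89} to identify $\dim_k\EXT^2(E_1,E_{1^*})$ with the number of minimal relations, and it obtains the resolutions by invoking the algorithm of Green--Solberg--Zacharia \cite{GSZ01}; you instead exhibit the resolution at $v=1$ by hand and certify its exactness via a direct dimension count. Your route is slightly more self-contained, while the paper's route avoids the explicit dimension computation by appealing to general machinery; substantively the two arguments coincide.
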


\begin{proof}
By \cite{Au55} it is sufficient to calculate the projective dimensions of all simple modules $E_{\nu}$ corresponding to the vertices $\nu \in Q_{0}$.

By \cite[Chapter III, Lemma 1.11]{ARS97} we have that the global dimension of $k\widetilde{\TT}_{p,\lambda}$ is at least two. We immediately get $S_{1^{*}}=P_{1^{*}}$ and $S_{(i,p_{i}-1)}=P_{(i,p_{i}-1)}$ for $1\leq i \leq r$. A direct calculation yields for $1\leq i \leq r$ the following projective resolution
\[0\longrightarrow P_{1^{*}}\oplus \bigoplus_{j= 2}^{p_{i}-1}P_{(i,j)}\longrightarrow P_{(i,1)}\longrightarrow S_{(i,1)}\longrightarrow 0.\]
A projective resolution for $S_{1}$ is given by
\[0\longrightarrow P_{1^{*}}^{2}\longrightarrow \bigoplus_{i=1}^{r}P_{(i,1)}\longrightarrow P_{1}\longrightarrow S_{1}\longrightarrow 0\]
and projective resolutions for the remaining simples, i.e. for $1\leq i\leq r$ and $2\leq j \leq p_{i}-2$, are
\[0 \longrightarrow \bigoplus_{k=j+1}^{p_{i}-1}P_{(i,k)}\longrightarrow P_{(i,j)}\longrightarrow S_{(i,j)}\longrightarrow 0.\]
All stated projective resolutions have length at most two, which yields the assertion.
\end{proof}

\begin{figure}
  \centering
  \begin{tikzpicture}[scale=3.4]

    \node (A6666) at (0,-4) [circle, draw, fill=black!50, inner sep=0pt, minimum width=4pt] {};
    \node (AA6666) at (0,-3.9) [] {\tiny{$(1,p_1-1)$}};
    \node (A666) at (0.5,-4) [circle, draw, fill=black!50, inner sep=0pt, minimum width=4pt] {};
    \node (AA666) at (0.5,-3.9) [] {\tiny{$(1,p_1-2)$}};
    \node (A66) at (0.75,-4) [] {$\ldots$};
    \node (A6) at (1,-4) [circle, draw, fill=black!50, inner sep=0pt, minimum width=4pt] {};
    \node (AA6) at (1,-3.9) [] {\tiny{$(1,2)$}};
    \node (B6) at (1.5,-4) [circle, draw, fill=black!50, inner sep=0pt, minimum width=4pt]{};
    \node (BB6) at (1.45,-3.9) [] {\tiny{$(1,1)$}};
    \node (C6) at (2,-4) [circle, draw, fill=black!50, inner sep=0pt, minimum width=4pt]{};
    \node (CC6) at (2,-3.9) [] {\tiny{$1$}};
    \node (D6) at (2.5,-4) [circle, draw, fill=black!50, inner sep=0pt, minimum width=4pt]{};
    \node (DD6) at (2.55,-3.9) [] {\tiny{$(r,1)$}};
    \node (E6) at (3,-4) [circle, draw, fill=black!50, inner sep=0pt, minimum width=4pt]{};
    \node (EE6) at (3,-3.9) [] {\tiny{$(r,2)$}};
    \node (E66) at (3.25,-4) [] {$\ldots$};
    \node (F6) at (3.5,-4) [circle, draw, fill=black!50, inner sep=0pt, minimum width=4pt]{};
    \node (FF6) at (3.5,-3.9) [] {\tiny{$(r,p_r-2)$}};
    \node (G6) at (4,-4) [circle, draw, fill=black!50, inner sep=0pt, minimum width=4pt]{};
    \node (GG6) at (4,-3.9) [] {\tiny{$(r,p_r-1)$}};
    \node (I66) at (2,-3.5) [circle, fill=white, inner sep=0pt, minimum width=9pt]{};
    \node (I6) at (2,-3.5) [circle, draw, fill=black!50, inner sep=0pt, minimum width=4pt]{};
    \node (II6) at (2,-3.4) [] {\tiny{$1^*$}};
    \node (J6) at (2.35,-4.2) [circle, draw, fill=black!50, inner sep=0pt, minimum width=4pt]{};
    \node (JJ6) at (2.55,-4.15) []{\tiny{$(r-1,1)$}};
    \node (J66) at (2.7,-4.4) [circle, draw, fill=black!50, inner sep=0pt, minimum width=4pt]{};
    \node (JJ66) at (2.9,-4.35) []{\tiny{$(r-1,2)$}};
    \node (J66666) at (2.88,-4.5) []{$\ldots$};
    \node (J666) at (3.05,-4.6) [circle, draw, fill=black!50, inner sep=0pt, minimum width=4pt]{};
    \node (JJ666) at (3.4,-4.55) []{\tiny{$(r-1,p_{r-1}-2)$}};
    \node (J6666) at (3.4,-4.8) [circle, draw, fill=black!50, inner sep=0pt, minimum width=4pt]{};
    \node (JJ6666) at (3.75,-4.75) []{\tiny{$(r-1,p_{r-1}-1)$}};
    \node (K6) at (1.65,-4.2) [circle, draw, fill=black!50, inner sep=0pt, minimum width=4pt]{};
    \node (KK6) at (1.55,-4.15) []{\tiny{$(2,1)$}};
    \node (K66) at (1.3,-4.4) [circle, draw, fill=black!50, inner sep=0pt, minimum width=4pt]{};
    \node (KK66) at (1.2,-4.35) []{\tiny{$(2,2)$}};
    \node (K66666) at (1.12,-4.5) []{\ldots};
    \node (K666) at (0.95,-4.6) [circle, draw, fill=black!50, inner sep=0pt, minimum width=4pt]{};
    \node (KK666) at (0.75,-4.55) []{\tiny{$(2,p_2-2)$}};
    \node (K6666) at (0.6,-4.8) [circle, draw, fill=black!50, inner sep=0pt, minimum width=4pt]{};
    \node (KK6666) at (0.4,-4.75) []{\tiny{$(2,p_2-1)$}};
    \node (L6) at (2,-4.5) []{\ldots};


    \draw[->] (A666) to (A6666);
    \draw[->] (J666) to (J6666);
    \draw[->] (K666) to (K6666);
    \draw[<-] (A6) to (B6);
    \draw[<-] (B6) to (C6);
    \draw[->] (C6) to (D6);
    \draw[->] (D6) to (E6);
    \draw[->] (F6) to (G6);
    \draw[<-] (I6) to (B6);
    \draw[<-] (I6) to (D6);
    \draw[<-] (I66) to (J6);
    \draw[<-] (I66) to (K6);
    \draw[->] (J6) to (J66);
    \draw[->] (K6) to (K66);
    \draw[->] (C6) to (J6);
    \draw[->] (C6) to (K6);

  \end{tikzpicture}
  \caption{One point extended star quiver $\phantom{0000000000000000}$} \label{def:Quiver_one}
\end{figure}

\begin{Proposition} \label{lem:QuadraticForm}
Consider the one point extended star quiver of Figure \ref{def:Quiver_one}. Then the following holds.
  \begin{itemize}
    \item[(a)] For every $\nu \in Q_{0}$ there is (up to isomorphism) precisely one  simple $k\widetilde{\TT}_{p,\lambda}$-module, given by $E_{\nu}$.
    \item[(b)] There exists a complete exceptional sequence $\mathcal{E}'$ which contains the simple modules $E_{\nu}$.
    \item[(c)] The symmetrized Euler form $\chi^{s}$ on $K_{0}(k\widetilde{\TT}_{p,\lambda})$ satisfies
    \[\chi^{s}([X],[Y])=q\big( (X_{\nu})_{\nu\in Q_{0}}+(Y_{\omega})_{\omega\in Q_{0}}\big)-q\big((X_{\nu})_{\nu\in Q_{0}}\big)-q\big((Y_{\omega})_{\omega\in Q_{0}}\big)\]
    for all $X,Y\in $ mod$(k\tilde{\TT}_{p,\lambda})$ with corresponding dimension vectors $(X_{\nu})_{\nu\in Q_{0}}$ resp. $(Y_{\nu})_{\nu\in Q_{0}}$ where $q$ is the Tits form given by
    \[q((Z_{\nu})_{\nu\in Q_{0}})=\sum_{\nu \in Q_{0}}Z_{\nu}^{2}-\sum_{\nu \rightarrow \omega} Z_{\nu}Z_{\omega}+2Z_{1^{*}}Z_{1}\]
    for $(Z_{\nu})_{\nu\in Q_{0}}\in \mathbbm{N}^{|Q_{0}|}\setminus \bf{0}$.
    \item[(d)] There exists an exceptional sequence $\mathcal{E}=(\ldots, E_{1},E_{1^{*}})$ in mod$(k\widetilde{\TT}_{p,\lambda})$ such that the corresponding roots in the Grothendieck group induce an extended Coxeter-Dynkin diagram as illustrated in Figure \ref{def:GenCoxDiag}.
  \end{itemize}
\end{Proposition}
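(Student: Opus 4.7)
The proposal proves the four parts (a)--(d) in sequence, using the classical theory of bound quiver algebras together with the explicit projective resolutions produced in the proof of Lemma~\ref{lem:gl.dim}.

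For part (a), I would invoke the standard bijection between vertices of a connected quiver and isomorphism classes of simple modules over its basic bound quiver algebra: for each $\nu\in Q_0$, the representation $E_\nu$ with a one-dimensional space at $\nu$ and zero elsewhere is simple, and every simple is of this form. For part (b), I would order the vertices by a linear extension of the partial order induced by the arrows so that for every arrow $\nu\to\omega$ the simple $E_\nu$ precedes $E_\omega$, placing $E_1$ at the appropriate position relative to the $E_{(i,j)}$'s and $E_{1^*}$. Reading off the projective resolutions from the proof of Lemma~\ref{lem:gl.dim}, one sees that $\dim_k\text{Ext}^1(E_\nu,E_\omega)$ equals the number of arrows $\nu\to\omega$, while $\dim_k\text{Ext}^2(E_\nu,E_\omega)$ equals the number of minimal relations from $\nu$ to $\omega$, which is non-zero only at $(\nu,\omega)=(1,1^*)$ where it equals $2$. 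Higher Ext-groups vanish by Lemma~\ref{lem:gl.dim}, so the resulting sequence satisfies the exceptional-sequence axioms; since its length $|Q_0|$ equals the rank of the Grothendieck group, it is complete.

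For part (c), I would apply Bongartz's formula \cite[Proposition~1]{Bo89} for the Euler form on a bound quiver algebra of finite global dimension,
\[\chi(X,Y)=\sum_{\nu\in Q_0}X_\nu Y_\nu-\sum_{a:\nu\to\omega}X_\nu Y_\omega+\sum_{r:\nu\to\omega\in R}X_\nu Y_\omega,\]
with $R$ a minimal system of relations. Since the two defining relations $r_1,r_2$ both start at $1$ and end at $1^*$, the last sum contributes $2X_1Y_{1^*}$. Setting $q(X):=\chi(X,X)$ gives the stated Tits form, and the polarization identity $\chi^s(X,Y)=q(X+Y)-q(X)-q(Y)$ then yields the claim.

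For part (d), I evaluate the formula of (c) on the dimension vectors of two distinct simples. Only three cases produce non-zero values: $\chi^s([E_\nu],[E_\omega])=-1$ when $\nu$ and $\omega$ are joined by a single arrow, $\chi^s([E_1],[E_{1^*}])=+2$ coming from the extra $2X_1Y_{1^*}$ term, and $\chi^s=0$ otherwise. Following Definition~\ref{def:GenCoxDiagram}, these values produce precisely the single solid edges, the dotted double edge between $1$ and $1^*$, and the absence of an edge in all other cases, reproducing Figure~\ref{def:GenCoxDiag}. The main obstacle will be part (b): organizing the ordering of the vertices and the Ext computations carefully enough to cope with the non-hereditary behavior of the algebra at the pair $(1,1^*)$ where $\text{Ext}^2$ does not vanish.
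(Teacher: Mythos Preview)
Your treatment of parts (a)--(c) is correct and follows essentially the paper's own argument: the same identification of simples, the same linear-extension ordering of the $E_\nu$ along the arrows, and the same polarization of the Tits form (the paper cites \cite[Proposition~2.2]{Bon83} for the equality of Euler quadratic form and Tits form; your citation of \cite{Bo89} is a slip, since that is Bondal on $\EXT^2$, not Bongartz on the Tits form).

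There is, however, a genuine gap in your handling of part (d). You correctly compute $\chi^s$ on pairs of simples and recover Figure~\ref{def:GenCoxDiag}; but the statement asks for an exceptional sequence $\mathcal{E}=(\ldots,E_1,E_{1^*})$ \emph{ending} in $E_1,E_{1^*}$, whereas your sequence $\mathcal{E}'$ from (b) necessarily \emph{begins} with $E_1$. Indeed, since there is an arrow $1\to(i,1)$ one has $\EXT^1(E_1,E_{(i,1)})\neq 0$, so $(E_{(i,1)},E_1)$ is never an exceptional pair and no mere reordering of the simples can place $E_1$ in the penultimate slot. The paper closes this gap by applying the braid $\sigma_{m-2}^{-1}\cdots\sigma_2^{-1}\sigma_1^{-1}$ to $\mathcal{E}'$, which slides $E_1$ to position $m-1$ at the cost of replacing each $E_j$ ($2\le j\le m-1$) by its left mutation $L_{E_1}E_j$; Lemma~\ref{le:BraidCompatibility} then says the new classes are $\pm s_{[E_1]}[E_j]$, and since $s_{[E_1]}$ is an isometry for $\chi^s$ the resulting Coxeter--Dynkin diagram is unchanged (up to signs of roots). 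This mutation step is what your outline is missing, and it is precisely the ``non-hereditary'' feature you anticipated: the obstacle lies not in the Ext-computations for (b) but in producing the specific ordering demanded by (d).
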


\begin{proof}
Denote by $E_{\nu}$ (up to isomorphism) the simple $k\widetilde{\TT}_{p,\lambda}$-modules corresponding to the vertices in $Q_{0}$ of the one point extended star quiver of Figure \ref{def:Quiver_one}. Each simple module $E_{\nu}$ is an exceptional object in the category mod$(k\widetilde{\TT}_{p,\lambda})$. The $k$-dimension of $\EXT_{k\widetilde{\TT}_{p,\lambda}}^{1}(E_{\nu},E_{\omega})$ for vertices $\nu,\omega \in Q_{0}$ is equal to the number of arrows from $\nu$ to $\omega$ and since the global dimension of mod$(k\widetilde{\TT}_{p,\lambda})$ is two (Lemma \ref{lem:gl.dim}), the $k$-dimension of $\EXT_{k\widetilde{\TT}_{p,\lambda}}^{2}(E_{\nu},E_{\omega})$ can easily be calculated by using \cite[Proposition 1]{Bo89}. This implies that the sequence
\[\mathcal{E}'=(E_{1},E_{(1,1)},E_{(1,2)},\ldots,E_{(1,p_{1}-1)},\ldots,E_{(r,1)},E_{(r,2)},\ldots,E_{(r,p_{r}-1)},E_{1^{*}})\]
is a complete exceptional sequence in mod$(k\widetilde{\TT}_{p,\lambda})$. An easy calculation yields that the Tits form on mod$(k\widetilde{\TT}_{p,\lambda})$ is given by
\[q((X_{\nu})_{\nu\in Q_{0}})=\sum_{\nu \in Q_{0}}X_{\nu}^{2}-\sum_{\nu \rightarrow \omega} X_{\nu}X_{\omega}+2X_{1^{*}}X_{1}\]
for $X\in $ mod$(k\widetilde{\TT}_{p,\lambda})$ with corresponding dimension vectors $(X_{\nu})_{\nu\in Q_{0}}$ (see \cite[Definition 2]{Bon83}). By part a) and \cite[Proposition 2.2]{Bon83} the Euler quadratic form and the Tits form coincide. The latter yields due to polarization the symmetrized Euler form
\[\chi^{s}([X],[Y])=q\big( (X_{\nu})_{\nu\in Q_{0}}+(Y_{\omega})_{\omega\in Q_{0}}\big)-q\big((X_{\nu})_{\nu\in Q_{0}}\big)-q\big((Y_{\omega})_{\omega\in Q_{0}}\big)\]
for all $X,Y\in $ mod$(k\widetilde{\TT}_{p,\lambda})$ with corresponding dimension vectors $(X_{\nu})_{\nu\in Q_{0}}$ resp. $(Y_{\nu})_{\nu\in Q_{0}}$. By knowing the symmetrized Euler form $\chi^{s}$ it is easy to see that the sequence $\mathcal{E}'$ induces a basis of the Grothendieck group of $\DD^{b}(k\widetilde{\TT}_{p,\lambda})$ such that the generalized Coxeter-Dynkin diagram is an extended Coxeter-Dynkin diagram (see Figure~\ref{def:GenCoxDiag}).

The number of objects in the complete exceptional sequence $\mathcal{E}'$ is given by \linebreak $m=\big(\sum_{i=1}^{r}p_{i}-1\big)+2$. By applying the braid $\sigma_{m-2}^{-1}\cdots \sigma_{2}^{-1}\sigma_{1}^{-1}$ to $\mathcal{E}'$ we obtain a new complete exceptional sequence $\mathcal{E}=(\ldots,E_{1},E_{1^{*}})$. Due to Lemma \ref{le:BraidCompatibility} the roots corresponding to $\mathcal{E}$ induce the same generalized Coxeter-Dynkin diagram.
\end{proof}

\begin{Proposition}\label{lem:root_system}
Let $\XX = (\PP_k^1,\lambda, p)$ be a weighted projective line and let $(K_0(\XX), \chi^{s}, \Delta_{\text{re}}(\XX), c)$ be the associated root quadruple. Then there is a complete exceptional sequence $\mathcal{E}=(\ldots,E_{1},E_{1^{*}})$  such that the following holds:
\begin{itemize}
\item[(a)] $\Delta_{\text{re}}(\XX)$ is an extended root system in $K_0(\XX)~\otimes_{\ZZ}~\RR$, and the generalized Coxeter-Dynkin diagram associated to $(\XX, \mathcal{E})$ is 
an extended Coxeter-Dynkin diagram;
\item[(b)] the Coxeter transformation is $c = \cdots s_{[E_1]} s_{[E_1^*]}$.
\end{itemize}
\end{Proposition}

\begin{proof}
Due to Corollary \ref{cor:BraidGroupActionExc}, \cite[Lemma 8.1.2]{Mel04} and Lemma \ref{le:Enlargement_DD}, the assumptions of \cite[Proposition 2.10]{STW16} are satisfied. So Corollary \ref{cor:genrootsys} yields the existence of an extended root system $\Delta_{\text{re}}(\XX)$ attached to $\COH(\XX)$. By \cite[Proposition 2.24]{STW16} the categories $\DD^{b}(\COH(\XX))$ and $\DD^{b}(k\widetilde{\TT}_{p,\lambda})$ are triangulated equivalent, thus their corresponding root quadruples are isomorphic (see Corollary~\ref{cor:genrootsys}). By Proposition \ref{lem:QuadraticForm} there exists a complete exceptional sequence $\mathcal{E}=(\ldots,E_{1},E_{1^{*}})$ and the corresponding elements in the Grothendieck group form a root basis of $\Delta_{\text{re}}(\XX)$ whose generalized Coxeter-Dynkin diagram is an extended diagram.
\end{proof}

\begin{remark}
Note that in contrast to \cite{STW16} we constructed an exceptional sequence with corresponding diagram as given in Figure \ref{def:GenCoxDiag} such that the two exceptional objects, which are connected by an edge of strength $2$, are neighboring in the exceptional sequence.
\end{remark}

\section{An order preserving bijection}\label{sec:OrderPreBij}
In this section we prove Theorem~\ref{conj:WeightProjElliptic0}. The interval posets have been introduced in Subsection~\ref{sec:IntervalPoset}. Here we present the other poset, that is, the poset of thick subcategories that are generated by exceptional sequences. Then, after the introduction of the Hurwitz action, Theorem~\ref{conj:WeightProjElliptic0} will be proven in the last subsection.

\subsection{Thick subcategories generated by exceptional sequences}\label{subsec:ThickSubcat}

\begin{Definition}\label{def:2outof3}
Let $\mathcal{A}$ be an abelian category and $\mathcal{H}$ a full subcategory of $\mathcal{A}$. The category $\mathcal{H}$ is called \defn{thick} if it is abelian and closed under extensions.

If $H$ is a class of objects in $\mathcal{A}$, we denote the smallest thick subcategory which contains $H$ by $\Thi(H)$ and call it the \defn{thick subcategory generated by $H$}.
\end{Definition}

\begin{remark}[{\cite[Theorem 3.3.1]{Di09} and \cite[Proposition 9.1]{IPT15}}] \label{rem:Thick_equiv}
  Let $\mathcal{A}$ be a hereditary abelian category. A full subcategory $\mathcal{H}$ of $\mathcal{A}$ is thick if and only if it is closed under direct summands and if it fulfils the following property. Given an exact sequence $0\rightarrow A \rightarrow B \rightarrow C \rightarrow 0$ in $\mathcal{A}$ and two of the three objects are in $\mathcal{H}$ then the third also lies in $\mathcal{H}$. The latter property is called '\textit{two out of three}' property.
\end{remark}

\begin{Lemma}\label{le:MutationInThick}
Let $(E,F)$ be an exceptional pair in $\COH(\mathbb{X})$, then $L_{E}F$ and $R_{F}E$ are objects in $\Thi(E,F)$.
\end{Lemma}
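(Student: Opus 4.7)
The plan is to apply the ``two out of three'' characterization of thick subcategories in hereditary abelian categories stated in Remark~\ref{rem:Thick_equiv}, using the three exact sequences that define $L_{E}F$ together with the symmetric ones for $R_{F}E$.

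First I would observe that by definition $E$ and $F$ both lie in $\Thi(E,F)$. Since a thick subcategory is in particular abelian and closed under direct summands, it is closed under finite direct sums. For any finite-dimensional $k$-vector space $V$, the object $V\otimes E$ is by construction a direct sum of $\dim_k V$ copies of $E$, and therefore lies in $\Thi(E,F)$; the analogous statement holds for $V\otimes F$. In particular $\HOM_{\XX}(E,F)\otimes E$ and $\EXT_{\XX}^{1}(E,F)\otimes E$ are objects of $\Thi(E,F)$.

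Next I would split into cases according to which of the three short exact sequences
$$0 \to L_{E}F \to \HOM_{\XX}(E,F)\otimes E \to F \to 0,$$
$$0 \to \HOM_{\XX}(E,F)\otimes E \to F \to L_{E}F \to 0,$$
$$0 \to F \to L_{E}F \to \EXT_{\XX}^{1}(E,F)\otimes E \to 0$$
actually holds in $\COH(\XX)$; the lemma statement recalls that in each admissible case $L_{E}F$ is uniquely determined by the relevant sequence. In every one of these three sequences, the two terms besides $L_{E}F$ have already been shown to belong to $\Thi(E,F)$. Since $\COH(\XX)$ is hereditary, the two-out-of-three property from Remark~\ref{rem:Thick_equiv} then forces $L_{E}F\in\Thi(E,F)$. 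The assertion for $R_{F}E$ follows by the same argument applied to its defining sequences, which are obtained from those of $L_{E}F$ by interchanging the roles of $E$ and $F$ and dualizing the multiplicity spaces.

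\textbf{Main obstacle.} There is essentially no obstacle here: the statement is a direct unwinding of the definitions of $L_{E}F$ and $R_{F}E$ combined with Remark~\ref{rem:Thick_equiv}. The only point requiring brief justification is that a thick subcategory, being abelian, automatically contains all finite direct sums of its objects, which is what lets $\HOM_{\XX}(E,F)\otimes E$ and $\EXT_{\XX}^{1}(E,F)\otimes E$ enter the two-out-of-three argument.
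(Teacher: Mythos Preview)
Your proof is correct and follows essentially the same approach as the paper: identify $V\otimes E$ as a finite direct sum of copies of $E$ lying in $\Thi(E,F)$, then apply the two-out-of-three property from Remark~\ref{rem:Thick_equiv} to the defining short exact sequence. The only cosmetic difference is that the paper justifies closure under finite direct sums via closure under extensions (using the split sequence $0\to E\to E\oplus E\to E\to 0$ iteratively), while you invoke the fact that an abelian subcategory contains finite direct sums; both are valid.
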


\begin{proof}
  We only consider the short exact sequence 
  \[0 \rightarrow L_{E}F \rightarrow \HOM_{\mathbb{X}}(E, F) \otimes E \rightarrow F \rightarrow 0.\]
  The argumentation for the other sequences is analogous. The object $\HOM_{\mathbb{X}}(E, F) \otimes E$ is isomorphic to the sum of $\text{dim}\big(\HOM_{\mathbb{X}}(E, F)\big)$ copies of $E$. Thick subcategories are closed under extensions, thus the object $\HOM_{\mathbb{X}}(E, F) \otimes E$ is in $\Thi(E,F)$. The 'two out of three' property yields that $L_{E}F$ is an object of $\Thi(E,F)$.
\end{proof}

For a subset $C \subseteq \COH(\XX)$ denote by 
$$
C^{\perp}=\lbrace X\in \COH(\XX) \mid \EXT_{\XX}^{i}(A,X)=0 ~\text{for all}~i \in \NN_{0} ~\text{and}~A\in C\rbrace
$$ 
the \defn{right perpendicular} of $C$ and analogously by 
$$
^{\perp}C=\lbrace X\in \COH(\XX) \mid \EXT_{\XX}^{i}(X,A)=0 ~\text{for all}~i \in \NN_{0} ~\text{and}~A\in C \rbrace
$$ 
the \defn{left perpendicular} of $C$.

The next result is well known and follows directly from Remark \ref{rem:Thick_equiv}.

\begin{Lemma}\label{lem:perpisthick}
Let $C \subseteq \COH(\mathbb{X})$. Then $C^{\perp}$ is a thick subcategory.
\end{Lemma}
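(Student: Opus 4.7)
The plan is to verify the two conditions in Remark~\ref{rem:Thick_equiv}, since $\COH(\XX)$ is hereditary: namely that $C^{\perp}$ is closed under direct summands and satisfies the two-out-of-three property with respect to short exact sequences. Fullness is automatic from the definition.

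For closure under direct summands, I would argue pointwise: if $X = X_{1}\oplus X_{2}\in C^{\perp}$, then for every $A\in C$ and every $i\in\NN_{0}$ the bifunctor $\EXT_{\XX}^{i}(A,-)$ is additive, so
\[
0 \;=\; \EXT_{\XX}^{i}(A,X_{1}\oplus X_{2}) \;\cong\; \EXT_{\XX}^{i}(A,X_{1}) \oplus \EXT_{\XX}^{i}(A,X_{2}),
\]
which forces both summands to lie in $C^{\perp}$.

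For the two-out-of-three property, fix a short exact sequence $0\to X\to Y\to Z\to 0$ in $\COH(\XX)$ with two of the three terms in $C^{\perp}$, and fix $A\in C$. Applying $\HOM_{\XX}(A,-)$ produces the long exact sequence
\[
0 \to \HOM_{\XX}(A,X) \to \HOM_{\XX}(A,Y) \to \HOM_{\XX}(A,Z) \to \EXT_{\XX}^{1}(A,X) \to \EXT_{\XX}^{1}(A,Y) \to \EXT_{\XX}^{1}(A,Z) \to \EXT_{\XX}^{2}(A,X) \to \cdots
\]
Since $\COH(\XX)$ is hereditary, all $\EXT_{\XX}^{i}$ with $i\geq 2$ vanish, so only the six terms above are relevant. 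In each of the three cases (the missing object being $X$, $Y$, or $Z$), the vanishing of the outer Ext-groups sandwiches the middle one and forces $\HOM_{\XX}(A,-)$ and $\EXT_{\XX}^{1}(A,-)$ of the third object to be zero. Running over all $A\in C$ yields the third object in $C^{\perp}$.

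There is no real obstacle here: the argument is a routine diagram chase that uses only additivity of $\EXT^{i}(A,-)$, the long exact sequence, and the fact that $\COH(\XX)$ has global dimension one so higher Ext terms in $\EXT_{\XX}^{i}(A,-)$ drop out. The only point worth emphasizing is that heredity is precisely what allows us to restrict to $i=0,1$, which makes Remark~\ref{rem:Thick_equiv} applicable.
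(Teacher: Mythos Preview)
Your proof is correct and follows essentially the same approach as the paper: both verify the criteria of Remark~\ref{rem:Thick_equiv} by using the long exact sequence in $\EXT_{\XX}^{\bullet}(A,-)$ together with heredity. The only cosmetic difference is that for closure under direct summands you invoke additivity of $\EXT_{\XX}^{i}(A,-)$ directly, whereas the paper obtains the vanishing of $\EXT^{1}_{\XX}(A,X_{j})$ via the long exact sequence applied to $0\to X_{1}\to X\to X_{2}\to 0$; your version is slightly cleaner.
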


\begin{proof}
By Remark \ref{rem:Thick_equiv} we need to check that $C^{\perp}$ is closed under direct summands and that the 'two out of three' property holds. Obviously $C^{\perp}$ is closed under direct summands and the 'two out of three' property follows by use of the long exact sequence induced by the covariant $\HOM_{\XX}$ functor.
\end{proof}

\begin{Lemma} \label{le:perpendicular}
Let $(E_1, \ldots, E_n)$ and $(F_1, \ldots , F_n)$ be complete exceptional sequences in $\COH(\mathbb{X})$ and $U=\Thi(E_1, \ldots , E_r)$, $V=\Thi(F_1, \ldots , F_s)$ for some $r,s \leq n$. Then $U=V$ if and only if $r=s$ and $(F_1, \ldots, F_r, E_{r+1}, \ldots, E_n)$ is a complete exceptional sequence.
\end{Lemma}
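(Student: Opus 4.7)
My approach is to identify both thick subcategories $U$ and $V$ with the same right perpendicular category $\{E_{r+1}, \ldots, E_n\}^{\perp}$. The key input is the perpendicular-category identity: for any complete exceptional sequence $(G_1, \ldots, G_n)$ in $\COH(\XX)$ and any $0 \le k \le n$, one has
\[
\Thi(G_1, \ldots, G_k) \;=\; \{G_{k+1}, \ldots, G_n\}^{\perp}.
\]
This is a classical result in the perpendicular calculus for hereditary categories developed by Geigle--Lenzing and further exploited for $\COH(\XX)$ in \cite{Mel04}; I would invoke it as a black box.

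For the ``if'' direction, assume $(F_1, \ldots, F_s, E_{r+1}, \ldots, E_n)$ is a complete exceptional sequence. Comparing its length $s + (n-r)$ with $n$ forces $s = r$. Applying the perpendicular identity at $k = r$ to the two complete exceptional sequences $(E_1,\ldots,E_n)$ and $(F_1,\ldots,F_r,E_{r+1},\ldots,E_n)$ I obtain
\[
U \;=\; \Thi(E_1, \ldots, E_r) \;=\; \{E_{r+1}, \ldots, E_n\}^{\perp} \;=\; \Thi(F_1, \ldots, F_r) \;=\; V.
\]

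For the ``only if'' direction, assume $U = V$. Since the classes of the objects of an exceptional sequence form a $\ZZ$-basis of the Grothendieck group of the thick subcategory they span, $U = V$ forces $r = s$. Both halves of the concatenated sequence $(F_1, \ldots, F_r, E_{r+1}, \ldots, E_n)$ are exceptional by hypothesis and its length is $n$, so it suffices to verify the cross-orthogonality $\EXT^{k}(E_j, F_i) = 0$ for $k \geq 0$, $j \geq r+1$ and $i \leq r$. For such $j$ and any $\ell \leq r$, the exceptional condition on $(E_1,\ldots,E_n)$ gives $E_\ell \in \{E_j\}^{\perp}$; by Lemma \ref{lem:perpisthick}, $\{E_j\}^{\perp}$ is thick, whence $U \subseteq \{E_j\}^{\perp}$. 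Since $F_i \in V = U$, the required vanishing follows.

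The main obstacle is the perpendicular identity itself, which is the only nontrivial ingredient and uses hereditariness of $\COH(\XX)$ in an essential way; without it one would be forced to argue by induction on $n - r$, descending via adjacent mutations and using Lemma \ref{le:MutationInThick} at each step to keep track of the thick subcategory generated. A minor point to be careful about is that the $\EXT$-vanishings occur in $\COH(\XX)$, not in $\DD$, so only the nonnegative degrees need to be checked; this is automatic from hereditariness.
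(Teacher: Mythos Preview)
Your proof is correct. The ``only if'' direction is essentially identical to the paper's: both show $E_j \in {}^{\perp}(F_1,\ldots,F_r)$ (equivalently $F_i \in \{E_j\}^{\perp}$) by passing through $\Thi(E_1,\ldots,E_r)=\Thi(F_1,\ldots,F_r)$ and using Lemma~\ref{lem:perpisthick}.

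The ``if'' direction genuinely differs. You invoke the identity $\Thi(G_1,\ldots,G_k)=\{G_{k+1},\ldots,G_n\}^{\perp}$ as a black box, which immediately gives $U=\{E_{r+1},\ldots,E_n\}^{\perp}=V$. The paper instead argues inside $\mathcal{H}=\{E_{r+1},\ldots,E_n\}^{\perp}$ that $(E_1,\ldots,E_r)$ and $(F_1,\ldots,F_r)$ are both complete exceptional sequences there, then uses braid-group transitivity (\cite{KM02} or \cite{RI94}, depending on whether $\mathcal{H}\simeq\COH(\XX')$ or $\MOD(A)$) together with Lemma~\ref{le:MutationInThick} to conclude $\Thi(E_1,\ldots,E_r)=\Thi(F_1,\ldots,F_r)$, never asserting that either equals $\mathcal{H}$. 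Your route is shorter on the page, but the black box you cite is typically proved using exactly the ingredients the paper uses (a distinguished complete exceptional sequence that visibly generates, plus braid transitivity and closure under mutation), so the paper's argument is in effect a self-contained unpacking of one direction of your identity. What the paper's approach buys is that it stays entirely within results already quoted in the text; what yours buys is conceptual clarity once one accepts the perpendicular identity as known.
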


\begin{proof}
First assume that $(F_1, \ldots, F_s, E_{r+1}, \ldots, E_n)$ is a complete exceptional sequence. Then it holds $r=s$. Consider the right perpendicular category $\mathcal{H}:=(E_{r+1},\ldots,E_{n})^{\perp}$ in $\COH(\mathbb{X})$. By \cite[Theorems 3.3.2 and 3.3.3]{Mel04} the category $\mathcal{H}$ is equivalent to the category $\COH(\mathbb{X}')$ for a weighted projective line $\mathbb{X}'$ of reduced weight or to $\MOD(A)$ for a hereditary artin algebra $A$. By the same results the Grothendieck group is in both cases of rank $r$, so $(E_1,\ldots,E_r)$ and $(F_1,\ldots,F_r)$ are complete exceptional sequences of $\mathcal{H}$. By \cite[Theorem 1.1]{KM02} in the case of $\COH(\mathbb{X}')$ and by \cite[Corollary Ch.7]{RI94} in the case of $\MOD(A)$ the exceptional sequences $(E_1,\ldots,E_r)$ and $(F_1,\ldots,F_r)$ lie in the same orbit of the braid group action. By Lemma \ref{le:MutationInThick} thick subcategories are closed under left and right mutation, so $\Thi(E_1,\ldots,E_r)=\Thi(F_1,\dots,F_r)$ holds.

Assume that $U=V$. By definition we have $E_1,\ldots,E_r \in (E_{r+1},\ldots,E_{n})^{\perp}$ and Lemma \ref{lem:perpisthick} implies with $U=V$
\[F_1,\ldots,F_s\in \Thi(F_1,\ldots,F_s)=\Thi(E_1,\ldots,E_r)\subseteq (E_{r+1},\ldots,E_n)^{\perp}.\]
Therefore $(F_1,\ldots,F_s,E_{r+1},\ldots,E_n)$ is an exceptional sequence. By Lemma \cite[Lemma 3.25]{Sun17} we need to check that $\Thi(F_1,\ldots,F_s,E_{r+1},\ldots,E_n)=\COH(\XX)$. The latter follows by
\begin{align*}
\Thi(F_1,\ldots,F_s,E_{r+1},\ldots,E_n)&=\Thi\left(\Thi(F_1,\ldots,F_s),\Thi(E_{r+1},\ldots,E_n)\right)\\
&=\Thi\left(\Thi(E_1,\ldots,E_r),\Thi(E_{r+1},\ldots,E_n)\right)\\
&=\Thi(E_1,\ldots,E_n)\\
&=\COH(\XX).
\end{align*}
\end{proof}

\begin{remark} \label{rem:Koehler}
Note that a thick subcategory in $\COH(\XX)$ is not necessarily generated by an exceptional sequence (see for instance \cite[Proposition 6.13]{Kra12}).
\end{remark}

\subsection{The Hurwitz action on exceptional sequences}\label{subsec:Hurwitz}
We will connect the braid group action on exceptional sequences with the Hurwitz action on tuples of reflections. Observe the following nice result regarding the Hurwitz action (as introduced in Section \ref{subsec:HurwitzN}).
\begin{Lemma} \label{lem:BraidConjugation}
Let $G$ be a group, $T$ be a subset of $G$ that is closed under conjugation and $t_1, \ldots , t_m, t \in T$. Then for all $x \in \langle t_1, \ldots , t_m \rangle$ there exists a braid $\sigma \in \mathcal{B}_{m+2}$ such that
$$
\sigma(t_1, \ldots , t_m, t,t) = (t_1, \ldots , t_m, t^x,t^x).
$$
\end{Lemma}

\begin{proof}
Put $\sigma:= (\sigma_i \sigma_{i+1} \cdots \sigma_{m+1})(\sigma_{m+1} \cdots \sigma_{i+1} \sigma_i)$. A straightforward calculation shows that
$$\sigma(t_1, \ldots , t_m, t,t) = (t_1, \ldots , t_m, t^{t_i},t^{t_i}).$$
\end{proof}

Here is the aforementioned connection.

\begin{Proposition} \label{thm:BraidGroupAction}
Let $\mathbb{X}$ be a weighted projective line and let $n$ be the rank of $K_0(\mathbb{X})$. If there exist exceptional sequences $(E_1, \ldots, E_n), (F_1, \ldots , F_n)$ of $\COH(\XX)$ and a braid $\sigma \in \mathcal{B}_n$ such that $\sigma(E_1, \ldots , E_n)=(F_1, \ldots , F_n)$, then also $\sigma(s_{[E_1]}, \ldots , s_{[E_n]})= (s_{[F_1]}, \ldots , s_{[F_n]})$.
\end{Proposition}

\begin{proof}
The fact that both braid group actions are compatible is due to Lemma \ref{le:BraidCompatibility}.
\end{proof}

We can use the definition of a generating factorization to draw the following conclusion from Proposition~\ref{thm:BraidGroupAction}.

\begin{corollary} \label{cor:RefExcSeq}
Let $W$ be an extended Weyl group of rank $n$ and let $c \in W$ be a Coxeter transformation. Assume that the braid group $\mathcal{B}_n$ acts transitively on the set of sequences $(s_1, \ldots , s_n)$ of reflections in $W$ such that $c= s_1 \cdots s_n$ is a generating factorization of $c$. If $c= t_1 \cdots t_n$ is a reduced generating factorization of $c$ then there exists up to isomorphism a unique complete exceptional sequence $(F_1, \ldots , F_n)$ in $\COH(\mathbb{X})$ such that $t_i = s_{[F_i]}$, for $1\leq i \leq n$.
\end{corollary}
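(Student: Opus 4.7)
The plan is to bootstrap from one known complete exceptional sequence and transport it via the braid action to realize the given factorization $(t_1,\ldots,t_n)$, then invoke the injectivity of $E\mapsto s_{[E]}$ for uniqueness.

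First I would invoke Lemma \ref{le:Enlargement_DD} to fix a complete exceptional sequence $(E_1,\ldots,E_n)$ in $\COH(\XX)$. By the construction of the Coxeter transformation $c$ attached to such a sequence in Corollary \ref{cor:genrootsys}, we have $c=s_{[E_1]}\cdots s_{[E_n]}$. The classes $[E_1],\ldots,[E_n]$ form a $\Z$-basis of $K_0(\XX)$ (the complete exceptional sequence yields a basis of the Grothendieck group), so in particular $\spanz([E_1],\ldots,[E_n])=\spanz(\Delta_{\text{re}}(\XX))$. Hence $(s_{[E_1]},\ldots,s_{[E_n]})$ is a reduced generating factorization of $c$ in the sense of Definition \ref{def:Generating}.

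Next I would apply the transitivity hypothesis: since both $(s_{[E_1]},\ldots,s_{[E_n]})$ and $(t_1,\ldots,t_n)$ are reduced generating factorizations of $c$, there exists $\sigma\in\mathcal{B}_n$ with
\[
\sigma\cdot(s_{[E_1]},\ldots,s_{[E_n]})=(t_1,\ldots,t_n).
\]
Setting $(F_1,\ldots,F_n):=\sigma\cdot(E_1,\ldots,E_n)$, Theorem \ref{thm:BraidGroupActionExc} ensures that $(F_1,\ldots,F_n)$ is again a complete exceptional sequence in $\COH(\XX)$, while Proposition \ref{thm:BraidGroupAction} (the compatibility of the two braid actions, itself resting on Lemma \ref{le:BraidCompatibility}) yields
\[
\sigma\cdot(s_{[E_1]},\ldots,s_{[E_n]})=(s_{[F_1]},\ldots,s_{[F_n]}),
\]
so $s_{[F_i]}=t_i$ for all $i$, giving existence.

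For uniqueness, suppose $(F_1,\ldots,F_n)$ and $(F_1',\ldots,F_n')$ are two complete exceptional sequences with $s_{[F_i]}=t_i=s_{[F_i']}$ for each $i$. Lemma \ref{le:Uniqueness} asserts that the map $E\mapsto s_{[E]}$ is injective on isomorphism classes of exceptional objects in $\COH(\XX)$, so $F_i\cong F_i'$ for every $i$. This completes the argument. The proof is essentially a direct assembly of the preceding lemmas, and I do not expect a genuine obstacle; the only subtle point is recording that a complete exceptional sequence automatically produces a \emph{generating} factorization, which is needed to make the transitivity hypothesis applicable.
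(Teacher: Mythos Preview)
Your proof is correct and follows essentially the same route as the paper's own argument: fix a complete exceptional sequence giving a generating factorization of $c$, use the transitivity hypothesis to obtain a braid $\sigma$, transport the exceptional sequence along $\sigma$ via Proposition~\ref{thm:BraidGroupAction}, and deduce uniqueness from Lemma~\ref{le:Uniqueness}. The only difference is that you spell out a few implicit steps (invoking Lemma~\ref{le:Enlargement_DD} and noting explicitly that the resulting factorization is generating), which is fine.
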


\begin{proof}
By Corollary \ref{cor:genrootsys} there exists a complete exceptional sequence \linebreak $(E_1, \ldots , E_n)$ such that $\spanz([E_{1}],\ldots,[E_{n}])=\spanz(\Delta_{\text{re}}(\XX))$ and $c= s_{[E_1]} \cdots s_{[E_n]}$, and by assumption  there exists $\sigma \in \mathcal{B}_n$ such that $(t_1, \ldots , t_n) = \sigma (s_{[E_1]}, \ldots , s_{[E_n]})$. Let $(F_1, \ldots , F_n)$ be the complete exceptional sequence such that $(F_1, \ldots , F_n) = \sigma (E_1, \ldots , E_n)$. It follows that $s_{[F_i]} = t_i$  by Proposition \ref{thm:BraidGroupAction}. The uniqueness follows from Lemma \ref{le:Uniqueness}.
\end{proof}

\subsection{Proof of the first main theorem, Theorem~\ref{conj:WeightProjElliptic0}} \label{sec:MainProof}
Now we will prove Theorem \ref{conj:WeightProjElliptic0}. Notice that Proposition~\ref{lem:redCoxElt} implies that the reflection length of a Coxeter transformation  equals the rank of the Grothendieck group $K_0(\XX)$.

\begin{proof}[\textbf{Proof of Theorem \ref{conj:WeightProjElliptic0}}]
Let $(E_1, \ldots , E_{n+2})$ be a complete exceptional sequence in $\COH(\XX)$. By Lemma \ref{le:InfoRootSystem} and Corollary \ref{cor:genrootsys} we have $c=s_{[E_1]} \cdots s_{[E_{n+2}]}$. For $U=\Thi(E_1, \ldots , E_r)$, where $r \leq n+2$,  put $\cox(U):=s_{[E_1]} \cdots s_{[E_r]} \in W$, where $W:= W(\XX)$ the extended Weyl group attached to $\XX$.
  
Let us first point out that $\cox(-)$ is well-defined. Therefore choose another complete exceptional sequence $(F_1, \ldots, F_{n+2})$ in $\COH(\mathbb{X})$ such that $U= \Thi(F_1, \ldots, F_s)$ for some $s \leq n+2$. Then $(F_1, \ldots, F_s, E_{r+1}, \ldots , E_{n+2})$ is a complete exeptional sequence by Lemma \ref{le:perpendicular}, and $r=s$. By Theorem \ref{thm:BraidGroupActionExc} and Proposition~\ref{thm:BraidGroupAction} we obtain
  $$
    s_{[F_1]} \cdots s_{[F_{n+2}]} = s_{[F_1]} \cdots s_{[F_r]}s_{[E_{r+1}]} \cdots s_{[E_{n+2}]} = s_{[E_1]} \cdots s_{[E_r]}s_{[E_{r+1}]} \cdots s_{[E_{n+2}]} = c.
  $$
Thus we get $s_{[F_1]} \cdots s_{[F_s]} = s_{[E_1]} \cdots s_{[E_r]}$.
  
By definition of $\cox(U)$ we have $\cox(U) \leq c$. Since the root quadruple is independent of the choice of the complete exceptional sequence it holds $\spanz([E_1], \ldots, [E_{n+2}])=\spanz(\Delta_{\text{re}}(\XX))$ and $[E_{i}]\in \Delta_{\text{re}}(\XX)$ for $1\leq i \leq n+2$. Hence $\cox(U)\in [\idop, c]^{\text{gen}}$.

Next we show that the map induced by $\cox(-)$ is injective. Let $(E_1, \ldots , E_{n+2})$ and\linebreak $(F_1, \ldots , F_{n+2})$ be complete exceptional sequences such that $U=\Thi(E_1, \ldots , E_r)$ and $V=\Thi(F_1, \ldots , F_s)$ for some $r,s \leq n+2$ and such that $\cox(U)= \cox(V)$. That is, $s_{[E_1]} \cdots s_{[E_r]}= s_{[F_1]} \cdots s_{[F_s]}$ and since $c$ is of length $n+2$ we obtain $r=s$. In particular
$$
    c=s_{[F_1]} \cdots s_{[F_r]}s_{[E_{r+1}]} \cdots s_{[E_{n+2}]}.
$$
By assumption (b) and Corollary \ref{cor:RefExcSeq} we obtain that $(F_1, \ldots , F_r, E_{r+1}, \ldots , E_{n+2})$ is a complete exceptional sequence. By Lemma \ref{le:perpendicular} we get $U=V$.

The surjectivity of $\cox(-)$ follows directly from Corollary \ref{cor:RefExcSeq}.

It remains to show that $\cox(-)$ is order preserving. Therefore let $V \subseteq U$ be thick subcategories with $U=\Thi(E_1, \ldots E_r)$ and $V=\Thi(F_1, \ldots , F_s)$. As in the proof of the Lemma \ref{le:perpendicular} the subcategory $U=(E_{r+1},\ldots,E_{n+2})^{\perp}$ is by \cite[Theorems 2.4.2 and 2.4.3]{Mel04} equivalent either to a module category $\MOD(A)$ for some hereditary algebra $A$ or to a sheaf category $\COH(\mathbb{X}')$ for a weighted projective line $\mathbb{X}'$ and $(F_1, \ldots , F_s)$ is an exceptional sequence in $U$. The rank of the Grothendieck group $K_0(U)$ implies $s \leq r$. By Lemma \ref{le:Enlargement_Coh} there exist exceptional objects $F'_{s+1}, \ldots , F'_r$ such that $(F_1, \ldots , F_s,F'_{s+1}, \ldots , F'_r)$ is a complete exceptional sequence in $U$. Therefore by Theorem \ref{thm:BraidGroupActionExc} $(E_1, \ldots E_r)$ and $(F_1, \ldots , F_s,F'_{s+1}, \ldots , F'_r)$ lie in the same orbit of the braid group action and $s_{[F_1]}\dots s_{[F_s]} s_{[F'_{s+1}]} \dots  s_{[F'_r]} = s_{[E_1]}\dots s_{[E_r]}$ by Proposition \ref{thm:BraidGroupAction}. Since $c=s_{[E_1]}\dots s_{[E_{n+2}]}$ is reduced, we have $s_{[F_1]}\dots s_{[F_s]} \leq s_{[E_1]}\dots s_{[E_r]}$.

Conversely, let $u \leq v \in [\idop, c]^{\text{gen}}$. Then there exists a reduced generating reflection factorization $c = t_1 \cdots t_{n+2}$ as well as $1 \leq i \leq j \leq n+2$ such that $u=t_1 \cdots t_i$ and $v = t_1 \cdots t_j$. By assumptions (a) and (b) this does not depend on the chosen factorization. By Corollary \ref{cor:RefExcSeq} there exists a unique complete exceptional sequence $(F_1, \ldots, F_n)$ in $\COH(\mathbb{X})$ such that $t_i = s_{[F_i]}$. In particular, we have $\Thi(F_1, \ldots , F_i) \subseteq \Thi(F_1, \ldots , F_j)$, as desired.
\end{proof}

By a theorem of Br\"uning \cite[Theorem 1.1]{Br07}, for a heriditary abelian category $\mathcal{A}$, there is a one-to-one correspondence between the thick subcategories in $\mathcal{D}^b(\mathcal{A})$ and the thick subcategories in $\mathcal{A}$. Therefore, the statement of Theorem~\ref{conj:WeightProjElliptic0} holds as well if we replace the category $\COH(\XX)$ therein by the bounded derived category $\DD^{b}(\COH(\XX))$.

\section{Elliptic root systems and elliptic Weyl groups} \label{sec:Elliptic}
In this section we introduce the notion of an elliptic root system and an elliptic Weyl group due to Saito. We recall some of their properties (see \cite{Sai85}), sometimes we add a proof if it is difficult to deduce from \cite{Sai85}. Moreover, we present some new results as for instance Lemma~\ref{lem:GenLatticeGroup}. Further we show that every root quadruple and every extended Weyl group for a category $\COH(\mathbb{X})$ of coherent sheaves over a weighted projective line $\XX$ of tubular type are an elliptic root system and an elliptic Weyl group, respectively (see Corollary \ref{prop:TubEllRoot}).

\subsection{Elliptic root systems and elliptic root basis} \label{subsec:Basis}

\begin{Definition} \label{def:EllipticRootSystem}
Let $V$ be a finite dimensional vector space over $\RR$ and $(-\mid -)$ a positive semidefinite symmetric bilinear form with $2$-dimensional radical. A reduced generalized root system $\Phi$ in $V$ is called \defn{elliptic root system} with respect to $(- \mid -)$. The \defn{rank} of $\Phi$ is the dimension of $V$. We call $W:= W_\Phi$ the \defn{elliptic Weyl group (of type $\Phi$)}. Since $\spanr(\Phi)=V$, we have that $L(\Phi)$ is a full lattice in $V$; it is  called the \defn{root lattice} of $\Phi$.
 
We simply call a root subsystem $\Phi^\prime \subseteq \Phi$ of $\Phi$ a \defn{subsystem} if $\Phi'$ is itself an elliptic root system. 

Note that the name elliptic root system was introduced in \cite{ST97} while first Saito used the name extended affine root system \cite{Sai85}.
\end{Definition}

From now on we assume that $\Phi$ is a simply-laced elliptic root system. The \defn{radical} $R$ of the bilinear form $(- \mid -)$ on $V$ and of the root lattice $L(\Phi)$ are, respectively, as $L(\Phi)$ is a full lattice in $V$,
    \begin{align*}
      R:= R_{(- \mid -)}  := & \{ x \in V \mid (x \mid y)= 0 ~\text{for all } y \in V \}
      ~\text{and }\\
    L(\Phi) \cap R = & \{ x \in L(\Phi) \mid (x \mid y)= 0 ~\text{for all } y \in L(\Phi)\}.
    \end{align*}
\medskip
Let $U$ be a one dimensional subspace of $R$ such that $L(\Phi) \cap U \neq \{0\}$, and denote by $p_R$ and $p_U$ the canonical epimorphisms $V \rightarrow V/R$ and $V \rightarrow V/U$ respectively. Let $(- \mid -)_R$ be the induced bilinear form on $V/R$, that is $(p_R(x) \mid p_R(y))_R = (x \mid y)$ for all $x,y \in V$. Define $(- \mid -)_U$ analogously.

\begin{Lemma}[{\cite[(3.1) Lemma 1, (3.2) Assertion i)]{Sai85}}] \label{le:EllipticFiniteAffine}
$\phantom{4}$
  \begin{itemize}
    \item[(a)] $\FRS := p_{R}(\Phi)$ is a finite root system with respect to $(- \mid -)_{R}$;
    \item[(b)] $\ARS :=p_{U}(\Phi)$ is an affine root system with respect to $(- \mid -)_{U}$;
    \item[(c)] $L(\Phi) \cap U$ and $ L(\Phi) \cap R$ are full lattices in $U$ and $R$, respectively.
  \end{itemize}
\end{Lemma}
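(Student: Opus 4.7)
The plan is to verify each part by checking the defining axioms: those of a finite root system for (a), those of an affine root system for (b), and to run a rank-counting argument for (c). The crucial common observation is that every $\alpha \in \Phi$ satisfies $(\alpha \mid \alpha) = 2 \neq 0$, hence $\alpha \notin R$, so $p_R(\alpha) \neq 0$ and $p_U(\alpha) \neq 0$, and moreover $(p_R(\alpha) \mid p_R(\alpha))_R = (p_U(\alpha) \mid p_U(\alpha))_U = 2$.

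For (a), the form $(- \mid -)_R$ is positive definite on $V/R$ by definition of $R$ as the radical. I would verify each axiom in turn: $V/R$ is $\RR$-spanned by $p_R(\Phi)$ because $V = \spanr(\Phi)$; the identity $p_R(s_\alpha(\beta)) = s_{p_R(\alpha)}(p_R(\beta))$, which holds since $(- \mid -)$ descends to $(- \mid -)_R$, gives closure under reflections; the Cartan integers are preserved under $p_R$, so integrality is preserved; and if $p_R(\beta) = c\, p_R(\alpha)$, then $2 = (p_R(\beta) \mid p_R(\beta))_R = 2c^2$ forces $c = \pm 1$, giving the reducedness condition. Finiteness then follows from the standard fact that a crystallographic subset of a positive-definite finite-dimensional inner product space consisting of vectors of a fixed length is finite.

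For (b), the form $(- \mid -)_U$ is positive semidefinite on $V/U$ with radical $R/U$ of rank one. The same verifications as in (a) show that $p_U(\Phi)$ satisfies the reflection, integrality, length-two, and reducedness axioms. To conclude that $p_U(\Phi)$ is an affine root system in Saito's sense, I would further invoke (a) applied to the composite projection $V/U \twoheadrightarrow (V/U)/(R/U) \cong V/R$, whose image is the finite quotient system $\FRS$, and check that the translations along the one-dimensional radical $R/U$ act integrally --- both facts inherited directly from the axioms for $\Phi$.

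For (c), I would use the short exact sequence
\[0 \longrightarrow L(\Phi) \cap W \longrightarrow L(\Phi) \longrightarrow p_W(L(\Phi)) \longrightarrow 0\]
for $W = R$ and $W = U$. Since $L(\Phi)$ is a free abelian group of rank $\dim V$, we have $\rk(L(\Phi) \cap W) = \dim V - \rk p_W(L(\Phi))$, so it suffices to show that $p_W(L(\Phi))$ is a full lattice in $V/W$: the intersection $L(\Phi) \cap W$ is then a discrete subgroup of rank $\dim W$, hence a full lattice in $W$. For $W = R$, part (a) identifies $p_R(L(\Phi)) = L(\FRS)$ as the root lattice of a finite root system, which is a full lattice in $V/R$. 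For $W = U$, part (b) identifies $p_U(L(\Phi)) = L(\ARS)$ as the root lattice of an affine root system, which is a full lattice in $V/U$. The main obstacle is the bookkeeping in (b): Saito's definition of an affine root system includes a crystallographic/translation condition beyond those listed among the elliptic axioms, and tracing this back to the integrality axiom for $\Phi$ is the only genuinely non-mechanical step. Once (a) and (b) are in hand, (c) is pure linear algebra.
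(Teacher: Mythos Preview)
The paper does not prove this lemma; it is quoted from Saito \cite{Sai85} without argument, so there is no in-paper proof to compare against. Your outline---check the root-system axioms on each quotient and then count ranks via the short exact sequences---is the natural route and is essentially how Saito proceeds.

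One small sharpening for (a): finiteness of $\FRS$ is not quite immediate from ``crystallographic plus fixed length''. The clean argument in the simply-laced case is to pick roots $\gamma_1,\dots,\gamma_n\in\FRS$ forming a basis of $V/R$ and observe that any $\bar\beta\in\FRS$ is determined by the integers $(\bar\beta\mid\gamma_i)_R$, each of which lies in $\{-2,-1,0,1,2\}$ by Cauchy--Schwarz and the common squared length $2$; this bounds $|\FRS|$ by $5^n$.

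You are right to single out (b) as the non-mechanical step, and in fact this is where a genuine hypothesis is hiding. Parts (b) and the $U$-half of (c) are \emph{false} for a completely arbitrary line $U\subset R$: once the $R$-half of (c) shows that $L(\Phi)\cap R$ has rank two, an irrational line $U$ meets it only in $0$, and then $p_U(L(\Phi))$ is dense in the direction $R/U$, so $\ARS$ cannot be an affine root system in any sense that would feed back into your exact-sequence argument. In Saito's framework one works with a \emph{marking}, i.e.\ a line $U$ defined over $\QQ$ relative to the root lattice, and that hypothesis is implicitly in force here (and is what the paper uses immediately afterwards when it writes $L(\Phi)\cap U=\ZZ a$). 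With that rationality assumption your argument for (c) is correct.
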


From now on assume that $\Phi$ is irreducible. Then, as $U \subseteq R$ and  as $R$ is the radical of the form $(- \mid -)$, the root systems $\ARS$ and $\FRS$ are irreducible. Our next aim is to describe $\Phi$ (see Proposition~\ref{RootSystem}). Let $\{\beta_0, \ldots , \beta_n\}$ be a simple system of $\ARS$ such that $\beta_1, \ldots , \beta_n$ span a finite root system. By \cite[Theorem 5.6]{Kac83} there exist unique $m_1, \ldots, m_n \in \NN$ such that 
$$\beta_0 + \sum_{i = 1}^n m_i \beta_i~\mbox{}$$
spans the lattice $L(\ARS) \cap R/U$. Let $\alpha_i$ be a preimage of $\beta_i$ in $\Phi$ for $0 \leq i \leq n$ and set
$$b = \alpha_0 + \sum_{i = 1}^n m_i \alpha_i.$$
Then $b$ is in $L(\Phi) \cap R$ and
$$\spanz(b +U) = L(\ARS) \cap R/U.$$
Let $a$ in $U$ such that $L(\Phi) \cap U = \ZZ a$.

\begin{Lemma}\label{RadicalLattice}
  We have $L(\Phi) \cap R = \ZZ a \oplus \ZZ b$.
\end{Lemma}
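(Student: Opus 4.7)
The plan is a straightforward diagram chase using $p_U$. First I would note that the inclusion $\ZZ a + \ZZ b \subseteq L(\Phi) \cap R$ is immediate: by construction $a \in L(\Phi) \cap U \subseteq L(\Phi) \cap R$ and $b = \alpha_0 + \sum_{i=1}^n m_i \alpha_i$ is in $L(\Phi) \cap R$. Moreover the sum is direct over $\ZZ$: the element $a$ is non-zero in $U$ while $p_U(b)$ generates a rank-one subgroup of $R/U$ (since $\spanz(b+U) = L(\ARS) \cap R/U$ is non-trivial), so $a$ and $b$ are $\RR$-linearly independent in $R$.

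For the reverse inclusion, let $x \in L(\Phi) \cap R$. The key observation is that
\[
L(\ARS) = \spanz(p_U(\Phi)) = p_U(\spanz(\Phi)) = p_U(L(\Phi)),
\]
so $p_U(x) \in p_U(L(\Phi) \cap R) \subseteq L(\ARS) \cap (R/U)$. Using the hypothesis $\spanz(b+U) = L(\ARS) \cap R/U$, there exists $m \in \ZZ$ with $p_U(x) = m(b+U) = p_U(mb)$. Hence $x - mb \in \ker p_U = U$, and since $x - mb \in L(\Phi)$ as well, we conclude $x - mb \in L(\Phi) \cap U = \ZZ a$, so $x = \ell a + m b$ for some $\ell \in \ZZ$. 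This gives $L(\Phi) \cap R \subseteq \ZZ a + \ZZ b$.

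The only step requiring a little care is the identification $L(\ARS) = p_U(L(\Phi))$, which lets us feed $p_U(x)$ into the hypothesis on $\spanz(b+U)$. Everything else is bookkeeping about kernels and lattice generators; no obstacle is serious here because the structural input (that the imaginary roots of the affine root system $\ARS$ form a rank-one lattice generated by $b+U$, and that $L(\Phi) \cap U = \ZZ a$) has already been built in.
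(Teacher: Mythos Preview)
Your proof is correct and follows essentially the same approach as the paper's. The paper packages the argument via two applications of the Dedekind (modular) identity to obtain $\ZZ b + U = (L(\Phi)\cap R) + U$ and then $L(\Phi)\cap R = \ZZ b + (L(\Phi)\cap U) = \ZZ a + \ZZ b$, whereas you carry out the equivalent element-wise diagram chase through $p_U$; the key inputs---$L(\ARS) = p_U(L(\Phi))$, $L(\ARS)\cap R/U = \ZZ(b+U)$, and $L(\Phi)\cap U = \ZZ a$---are identical in both.
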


\begin{proof} 
By the choice of $a$ and $b$,  the right hand side is a  direct sum. Further $a,b \in L(\Phi) \cap R$ by the choice of $a$ and $b$. Thus $\ZZ a \oplus \ZZ b \subseteq L(\Phi) \cap R$.

Now let $\alpha  \in L(\Phi) \cap R$. Then, as $U \subseteq R$ yields $(L(\Phi) + U) \cap R =  (L(\Phi) \cap R) +U$ by the Dedekind identity, and as $(L(\Phi) +U)/U = L(\ARS)$, it follows that 
$$p_U(\alpha) \in [(L(\Phi) +U)/U] \cap R/U = L(\ARS) \cap R/U. $$

Therefore $p_U(\alpha) = z b +U$  for some $z \in \ZZ$. As $\alpha, b \in L(\Phi)$ it follows $\alpha - z b \in L(\Phi) \cap U = \ZZ a$, which yields the assertion.
\end{proof}

Set 
$$\VAFF := \sum_{i = 0}^n \RR \alpha_i~\mbox{and}~\VFIN := \sum_{i = 1}^n \RR \alpha_i.$$
Then $\alpha_0$ is in $\VAFF \cap \Phi$, and $(- \mid -)$ restricted to $\VFIN$ is positive definite, which yields that $\VFIN \cap \Phi$ is a root system in $\VFIN$, which  is isomorphic to $\FRS$. In the following we identify $V_x \cap \Phi$ with $\Phi_x$ and $V_x \cap L(\Phi)$ with $L(\Phi_x)$ for $x \in \{\text{fin}, \text{aff} \}$. Now we are ready to state and proof the following description of the elliptic root system.

\begin{Proposition}\label{RootSystem}
Let $\Phi$ be an irreducible simply-laced elliptic root system with respect to $(- \mid -)$ of rank at least four. Then there are $a,b \in R= R_{(- \mid -)}$ such that $L(\Phi) \cap R = \ZZ a \oplus \ZZ b$ and
$$\Phi = \FRS \oplus \ZZ a \oplus \ZZ b.$$
\end{Proposition}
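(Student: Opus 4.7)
The existence of $a,b \in R$ with $L(\Phi) \cap R = \ZZ a \oplus \ZZ b$ is already supplied by Lemma~\ref{RadicalLattice}, so the task reduces to the set equality $\Phi = \widehat{\Phi} + \ZZ a + \ZZ b$. My plan is to introduce a ``common fiber'' subgroup $S \subseteq L(\Phi) \cap R$ capturing all radical shifts realized in $\Phi$, and then identify $S$ with $L(\Phi) \cap R$ via a root-lattice comparison; this handles $a$ and $b$ uniformly and avoids any case-by-case analysis.

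First I would verify that $p_R|_{\widehat{\Phi}} \colon \widehat{\Phi} \to \FRS$ is a bijection. Injectivity is immediate from $V_f \cap R = 0$, since the form is positive definite on $V_f$ (as $\alpha_1,\ldots,\alpha_n$ span a finite root subsystem). Surjectivity holds because $\{\alpha_1,\ldots,\alpha_n\} \subseteq \widehat{\Phi}$ projects to a simple system of $\FRS$ and $p_R$ intertwines the reflections $s_{\alpha_i}$ with those generating $W(\FRS)$, whose orbit on the simple roots exhausts $\FRS$. Consequently any $\alpha \in \Phi$ decomposes uniquely as $\alpha = \widehat{\alpha} + r$ with $\widehat{\alpha} \in \widehat{\Phi}$ and $r = \alpha - \widehat{\alpha} \in L(\Phi) \cap R$.

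Next I would set
$$
S := \{\, r \in L(\Phi) \cap R \mid \widehat{\Phi} + r \subseteq \Phi \,\}.
$$
Because $W(\widehat{\Phi}) \subseteq W_\Phi$ fixes $R$ pointwise and acts transitively on $\widehat{\Phi}$ (irreducible simply-laced of rank $\geq 2$, using the hypothesis $\rk \Phi \geq 4$), the condition $\widehat{\alpha} + r \in \Phi$ does not depend on the choice of $\widehat{\alpha}$, and combined with the unique decomposition above this yields $\Phi = \widehat{\Phi} + S$. I would then check that $S$ is an additive subgroup: for $r_1, r_2 \in S$ and $\widehat{\alpha}, \widehat{\beta} \in \widehat{\Phi}$ with $(\widehat{\beta} \mid \widehat{\alpha}) = 1$,
$$
s_{\widehat{\alpha}+r_1}(\widehat{\beta}+r_2) = (\widehat{\beta} - \widehat{\alpha}) + (r_2 - r_1),
$$
using $(\widehat{\alpha}+r_1 \mid \widehat{\beta}+r_2) = (\widehat{\alpha} \mid \widehat{\beta}) = 1$ as $r_i \in R$. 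Since $\widehat{\beta} - \widehat{\alpha} \in \widehat{\Phi}$ (standard $\alpha$-string argument for simply-laced roots) and the left-hand side lies in $\Phi$, one concludes $r_2 - r_1 \in S$. Existence of $\widehat{\beta}$ with $(\widehat{\beta} \mid \widehat{\alpha}) = 1$ again uses $\rk \widehat{\Phi} \geq 2$.

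To finish, since $\Phi = \widehat{\Phi} + S$ with $S$ a subgroup of $L(\Phi) \cap R$, I would compute
$$
L(\Phi) = \spanz(\Phi) = L(\widehat{\Phi}) + S,
$$
the sum being direct as abelian groups because $L(\widehat{\Phi}) \subseteq V_f$ and $S \subseteq R$ intersect only at $0$. Intersecting with $R$ kills the $L(\widehat{\Phi})$-summand and gives $S = L(\Phi) \cap R = \ZZ a \oplus \ZZ b$ by Lemma~\ref{RadicalLattice}; therefore $\Phi = \widehat{\Phi} + S = \widehat{\Phi} + \ZZ a + \ZZ b$, as claimed. The main subtlety I anticipate lies in carefully setting up the bijection $p_R|_{\widehat{\Phi}} \to \FRS$ and in the $\alpha$-string argument giving $\widehat{\beta} - \widehat{\alpha} \in \widehat{\Phi}$; once these are in place, the rest is formal manipulation of the subgroup $S$ against Lemma~\ref{RadicalLattice}.
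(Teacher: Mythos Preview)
Your argument is correct and in fact somewhat cleaner than the paper's. Both proofs rest on the same core idea: the set of radical shifts realized in $\Phi$ (your $S$, the paper's $K_R(\alpha)$) is closed under subtraction, witnessed by a reflection computation with two roots of inner product $\pm 1$. The differences lie in the surrounding architecture. For the inclusion $\Phi \subseteq \widehat{\Phi} + (\text{radical shifts})$, the paper routes through the affine projection $p_U$ and the simple system of $\ARS$ to force the $a$-coefficient to be integral, whereas you get this for free from the bijection $p_R|_{\widehat\Phi}\colon \widehat\Phi \to \FRS$ and the observation that $r = \alpha - \widehat\alpha$ automatically lies in $L(\Phi)\cap R$. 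For the identification of the shift set with $L(\Phi)\cap R$, the paper invokes Saito's structural facts about $K_R(\alpha)$ from \cite[1.16]{Sai85} (it contains a full lattice, its union over $\alpha$ generates $L(\Phi)\cap R$), while your lattice computation $L(\Phi) = L(\widehat\Phi)\oplus S$ followed by intersecting with $R$ is self-contained and avoids that external input. The price you pay is having to set up the bijection $\widehat\Phi\cong\FRS$ explicitly, but this is routine; overall your route is more elementary and treats $a$ and $b$ symmetrically.
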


\begin{proof}
Let $a, b \in R$ and $U$ be chosen as above. Then $L(\Phi) \cap R = \ZZ a \oplus \ZZ b$ by Lemma~\ref{RadicalLattice}. Next we show $\Phi = \FRS \oplus \ZZ a \oplus \ZZ b$. Let $\alpha \in \Phi$. Then $p_U(\alpha) \in \ARS$, so
$$p_U(\alpha) = \sum_{i = 1}^n z_i \beta_i + z p_U(b)~\mbox{for some}~z_i,z \in \ZZ,$$
and $\alpha = \sum_{i = 1}^n z_i \alpha_i + z b + ra$ for some $r \in \RR$. We get $ra = \alpha - \sum_{i = 1}^n z_i \alpha_i - z b \in L(\Phi) \cap U = \ZZ a$. Thus $r \in \ZZ$. Further, as $p_R(\alpha) = \sum_{i=1}^n z_i p_R(\alpha_i)$ is in $\FRS$, we get
$$\sum_{i=1}^n z_i \alpha_i \in \FRS$$
and one inclusion holds. For the other inclusion consider for $\alpha \in  \Phi$ the set
$$K_R(\alpha):= \{x \in R~|~\alpha + x \in \Phi\},$$
that is studied in \cite[1.16]{Sai85}. Saito shows for every $\alpha \in \Phi$ that

\begin{itemize}
    \item[(a)] $K_R(\alpha)$ contains a full lattice of $R$ (the  $L$ in \cite{Sai85} is $ \VFIN$ here);
    \item[(b)] $K_R(\varphi (\alpha)) = K_R(\alpha)$ for every $\varphi \in \Aut (\Phi)$;
    \item[(c)]  $\cup_{\alpha \in \FRS} K_R(\alpha)$ generates the lattice $L(\Phi) \cap R$.
\end{itemize}

Since $\Phi$ is simply-laced, $\FRS$ is simply-laced as well, and, as $\FRS$ is finite, the related Dynkin diagram is a tree. Therefore $W_{\FRS}$ is transitive on $\FRS$. We conclude from (b) and (c) that $K_R(\beta) = \cup_{\alpha \in \FRS} K_R(\alpha)$ generates $L(\Phi) \cap R$ for every $\beta \in \Phi$.

Now we show that $K_R(\beta)$ is itself a lattice, that is a subgroup of $L(\Phi)$. Let $x, y \in K_R(\beta)$, and, as $\FRS$ is irreducible and of rank at least $2$, we may choose $\alpha_1, \alpha_2 \in \FRS$  with $(\alpha_1 \mid \alpha_2) = -1$. Then $s_{\alpha_2 +y}(\alpha_1 + x) = \alpha_1 +x + \alpha_2 +y \in \Phi$ which implies $x+y \in K_R(\alpha_1 + \alpha_2) =  K_R(\beta)$. Further $s_{\alpha_2}(-\alpha_1 - x) = -\alpha_1 - x - \alpha_2 = -(\alpha_1 + \alpha_2) - x \in \Phi$ and therefore $- x \in K_R(\beta)$ as well. Thus $K_R(\beta)$ is a lattice. Now (c) and Lemma~\ref{RadicalLattice} yield $K_R(\beta) = L(\Phi) \cap R = \ZZ a \oplus \ZZ b$, which yields the other inclusion.
\end{proof}

\begin{remark}
If the finite root system $\FRS$  is of type $A_{1}$, that is the corresponding simply-laced elliptic root system is of rank three, then there exist exactly two simply-laced elliptic root systems as described in \cite[Proposition 4.2 and Table 4.5]{BA97}. Therefore, the conclusion of Proposition~\ref{RootSystem} does not hold if the rank of $\Phi$ is less than four.
\end{remark}

Set $\widetilde{\alpha} := -\alpha_0 + b$. Then $\widetilde{\alpha} = \sum_{i = 1}^n m_i \alpha_i$ is the highest root in $\FRS$ with respect to the simple system $\{\alpha_1, \ldots , \alpha_n\}$. The Dynkin diagram of $\ARS$ is one of the diagrams $X_n^{(1)}$ given in Table Aff 1 of \cite{Kac83} where $X_n$ is one of the simply-laced types $A_n$ ($n \geq 2$), $D_n$ ($n \geq 4$) or $E_n$ ($n \in \{ 6,7,8 \}$).
\medskip

In contrast to finite root systems, the Weyl group $W= W_\Phi$ of an elliptic root system does not act anywhere properly on the ambient vector space. Hence there is no analogous of a Weyl chamber. Nevertheless Saito introduced  the notion of a basis for $\Phi$ and classified the irreducible elliptic root systems in terms of so called elliptic Dynkin diagrams in \cite{Sai85}.

Let $\Gamma_{\text{aff}} := \{\alpha_0, \ldots , \alpha_n\}$ be the set of simple roots of $\Phi \cap \VAFF = \ARS$ as chosen before Lemma~\ref{RadicalLattice}.

\begin{remark}
By \cite[Chapter 3]{Sai85} the set $\Gamma_{\text{aff}}$ is unique up to isomorphism of $\Phi$.
\end{remark}

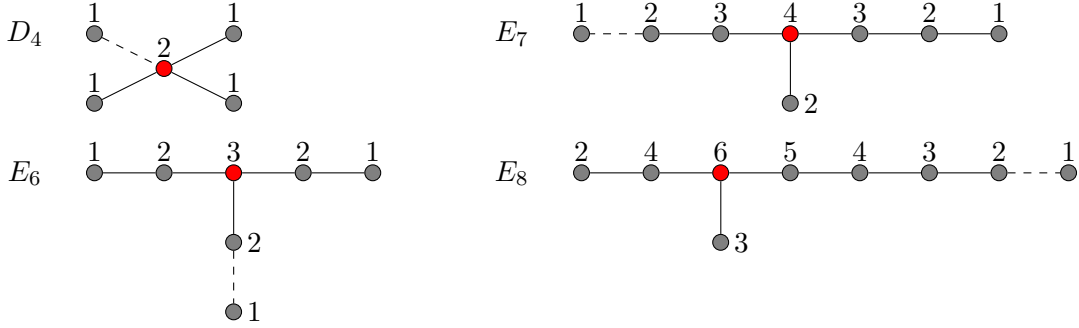
\begin{figure}
  \centering
  \begin{tikzpicture}[scale=1.85]

    \node (03) at (0.5,-1.5) [] {$D_4$};
    \node (AA33) at (1,-1.35) [] {$1$};
    \node (AA3) at (1,-1.5) [circle, draw, fill=black!50, inner sep=0pt, minimum width=6pt] {};
    \node (A33) at (1,-1.85) [] {$1$};
    \node (A3) at (1,-2) [circle, draw, fill=black!50, inner sep=0pt, minimum width=6pt] {};
    \node (B33) at (1.5,-1.6) [] {$2$};
    \node (B3) at (1.5,-1.75) [circle, draw, fill=red!150, inner sep=0pt, minimum width=6pt]{};

    \node (E33) at (2,-1.35) [] {$1$};
    \node (E3) at (2,-1.5) [circle, draw, fill=black!50, inner sep=0pt, minimum width=6pt]{};
    \node (F33) at (2,-1.85) [] {$1$};
    \node (F3) at (2,-2) [circle, draw, fill=black!50, inner sep=0pt, minimum width=6pt]{};

    \node (04) at (0.5,-2.5) [] {$E_6$};
    \node (A44) at (1,-2.35) [] {$1$};
    \node (A4) at (1,-2.5) [circle, draw, fill=black!50, inner sep=0pt, minimum width=6pt] {};
    \node (B44) at (1.5,-2.35) [] {$2$};
    \node (B4) at (1.5,-2.5) [circle, draw, fill=black!50, inner sep=0pt, minimum width=6pt]{};
    \node (C44) at (2,-2.35) [] {$3$};
    \node (C4) at (2,-2.5) [circle, draw, fill=red!150, inner sep=0pt, minimum width=6pt]{};
    \node (D44) at (2.5,-2.35) [] {$2$};
    \node (D4) at (2.5,-2.5) [circle, draw, fill=black!50, inner sep=0pt, minimum width=6pt]{};
    \node (E44) at (3,-2.35) [] {$1$};
    \node (E4) at (3,-2.5) [circle, draw, fill=black!50, inner sep=0pt, minimum width=6pt]{};
    \node (F44) at (2.15,-3) [] {$2$};
    \node (F4) at (2,-3) [circle, draw, fill=black!50, inner sep=0pt, minimum width=6pt]{};
    \node (FF44) at (2.15,-3.5) [] {$1$};
    \node (FF4) at (2,-3.5) [circle, draw, fill=black!50, inner sep=0pt, minimum width=6pt]{};

    \node (05) at (4,-1.5) [] {$E_7$};
    \node (AA55) at (4.5,-1.35) [] {$1$};
    \node (AA5) at (4.5,-1.5) [circle, draw, fill=black!50, inner sep=0pt, minimum width=6pt] {};
    \node (A55) at (5,-1.35) [] {$2$};
    \node (A5) at (5,-1.5) [circle, draw, fill=black!50, inner sep=0pt, minimum width=6pt] {};
    \node (B55) at (5.5,-1.35) [] {$3$};
    \node (B5) at (5.5,-1.5) [circle, draw, fill=black!50, inner sep=0pt, minimum width=6pt]{};
    \node (C55) at (6,-1.35) [] {$4$};
    \node (C5) at (6,-1.5) [circle, draw, fill=red!150, inner sep=0pt, minimum width=6pt]{};
    \node (D55) at (6.5,-1.35) [] {$3$};
    \node (D5) at (6.5,-1.5) [circle, draw, fill=black!50, inner sep=0pt, minimum width=6pt]{};
    \node (E55) at (7,-1.35) [] {$2$};
    \node (E5) at (7,-1.5) [circle, draw, fill=black!50, inner sep=0pt, minimum width=6pt]{};
    \node (F55) at (7.5,-1.35) [] {$1$};
    \node (F5) at (7.5,-1.5) [circle, draw, fill=black!50, inner sep=0pt, minimum width=6pt]{};
    \node (G5) at (6,-2) [circle, draw, fill=black!50, inner sep=0pt, minimum width=6pt]{};
    \node (G55) at (6.15,-2) [] {$2$};

    \node (06) at (4,-2.5) [] {$E_8$};
    \node (A6) at (4.5,-2.5) [circle, draw, fill=black!50, inner sep=0pt, minimum width=6pt] {};
    \node (A66) at (4.5,-2.35) [] {$2$};
    \node (B6) at (5,-2.5) [circle, draw, fill=black!50, inner sep=0pt, minimum width=6pt]{};
    \node (B66) at (5,-2.35) [] {$4$};
    \node (C6) at (5.5,-2.5) [circle, draw, fill=red!150, inner sep=0pt, minimum width=6pt]{};
    \node (C66) at (5.5,-2.35) [] {$6$};
    \node (D6) at (6,-2.5) [circle, draw, fill=black!50, inner sep=0pt, minimum width=6pt]{};
    \node (D66) at (6,-2.35) [] {$5$};
    \node (E6) at (6.5,-2.5) [circle, draw, fill=black!50, inner sep=0pt, minimum width=6pt]{};
    \node (E66) at (6.5,-2.35) [] {$4$};
    \node (F6) at (7,-2.5) [circle, draw, fill=black!50, inner sep=0pt, minimum width=6pt]{};
    \node (F66) at (7,-2.35) [] {$3$};
    \node (G6) at (5.5,-3) [circle, draw, fill=black!50, inner sep=0pt, minimum width=6pt]{};
    \node (G66) at (5.65,-3) [] {$3$};
    \node (H6) at (7.5,-2.5) [circle, draw, fill=black!50, inner sep=0pt, minimum width=6pt]{};
    \node (H66) at (7.5,-2.35) [] {$2$};
    \node (HH6) at (8,-2.5) [circle, draw, fill=black!50, inner sep=0pt, minimum width=6pt]{};
    \node (HH66) at (8,-2.35) [] {$1$};


    \draw[dashed] (AA3) to (B3);
    \draw[-] (A3) to (B3);
    \draw[-] (B3) to (E3);
    \draw[-] (B3) to (F3);

    \draw[-] (A4) to (B4);
    \draw[-] (B4) to (C4);
    \draw[-] (C4) to (D4);
    \draw[-] (D4) to (E4);
    \draw[-] (C4) to (F4);
    \draw[dashed] (F4) to (FF4);

    \draw[dashed] (A5) to (AA5);
    \draw[-] (A5) to (B5);
    \draw[-] (B5) to (C5);
    \draw[-] (C5) to (D5);
    \draw[-] (D5) to (E5);
    \draw[-] (E5) to (F5);
    \draw[-] (C5) to (G5);

    \draw[-] (A6) to (B6);
    \draw[-] (B6) to (C6);
    \draw[-] (C6) to (D6);
    \draw[-] (D6) to (E6);
    \draw[-] (E6) to (F6);
    \draw[-] (C6) to (G6);
    \draw[-] (F6) to (H6);
    \draw[dashed] (H6) to (HH6);

  \end{tikzpicture}
  \caption{(Extended) Dynkin diagrams. Each vertex is labeled by the corresponding coefficient $m_t$ in the linear combination of the highest root in the simple roots.} \label{fig:Dynkin}
\end{figure}

Notice that Proposition~\ref{RootSystem} also yields the splitting
$$
  \VAFF \cap L(\Phi) = \left( \bigoplus_{i = 1}^n \ZZ \alpha_i \right) \oplus \ZZ b = L(\FRS) \oplus \ZZ = b = L(\ARS).
$$
In the remainder of the paper we are interested in special irreducible elliptic root systems, the tubular ones:

\begin{Definition}\label{def:tubular}
An irreducible elliptic root system $\Phi$ is called \defn{tubular} if $\FRS$ is of type $D_4, E_6, E_7$ or $E_8$, respectively. The corresponding elliptic Weyl group $W=W_{\Phi}$ is called \defn{tubular} elliptic Weyl group.
\end{Definition}
The term tubular will be justified in Section \ref{subsec:TubEllRootSystem}.

\begin{remark}\label{rem:root_coeff}
For the types $D_4, E_6,E_7$ and $E_8$, we list in Figure \ref{fig:Dynkin} the non-negative integers $m_i$ such that $\widetilde{\alpha} = - \alpha_0 + b = \sum_{i= 1}^n m_i \alpha_i$ in the corresponding (extended) Dynkin diagrams next to  the vertices  representing $\alpha_i$ (for $1 \leq i \leq n$).
\end{remark}

Now we define a root basis for an elliptic root system. If the root system is tubular, then this root basis is a basis of the underlying vector space $V$. Notice that our definition agrees with that one in \cite[Section 8]{Sai85} by Proposition~\ref{RootSystem}.

\begin{Definition}\label{DefinitionRootBasis}
Let $\Phi$ be an irreducible simply-laced elliptic root system, $U$ a $1$-dimensional subspace of the radical $R$, $a \in U$ such that $L(\Phi) \cap U = \ZZ a$, and $\Gamma_{\text{aff}} = \{\alpha_0, \ldots , \alpha_n\}$. For $\alpha \in  \Gamma_{\text{aff}}$ put $\alpha^*  = \alpha + a$, and define 
$$m_{\text{max}}  := \max \{ m_{i} \mid 1 \leq i \leq n \}~\mbox{and}~ \Gamma_{\max}=\{ \alpha_i^* \mid m_{i}= m_{\max} \}.$$
Then the set $\Gamma:= \Gamma(\Phi):= \Gamma_{\text{aff}} \cup \Gamma_{\max}$ is called \defn{elliptic root basis} for $\Phi$.
\end{Definition}

Note that the name elliptic root basis was first used in \cite{SY00}.

\begin{Proposition}[{\cite[Section 9]{Sai85}}]
  Let $\Gamma = \Gamma(\Phi)$ be an elliptic root basis of an irreducible simply-laced elliptic root system $\Phi$. Then the following holds.
  \begin{enumerate}
    \item[(a)] $W= W_{\Gamma}$;
    \item[(b)] $\Phi = W_{\Gamma}(\Gamma)$.
  \end{enumerate}
\end{Proposition}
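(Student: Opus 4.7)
The plan is to prove (b) first and to derive (a) as a consequence. The inclusion $W_{\Gamma}\subseteq W_{\Phi}$ is immediate since $\Gamma\subseteq\Phi$, so each generator $s_{\gamma}$ of $W_{\Gamma}$ lies in $W_{\Phi}$. Conversely, once (b) is established, every $\beta\in\Phi$ is of the form $w(\gamma)$ for some $w\in W_{\Gamma}$ and $\gamma\in\Gamma$, whence $s_{\beta}=w s_{\gamma} w^{-1}\in W_{\Gamma}$; since $W_{\Phi}$ is generated by all such $s_{\beta}$, this yields $W_{\Phi}\subseteq W_{\Gamma}$.

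For (b), one inclusion is easy: $\Gamma\subseteq\Phi$ and $W_{\Gamma}\subseteq W_{\Phi}$ stabilises $\Phi$ by Definition \ref{def:EllipticRootSystem}(b), so $W_{\Gamma}(\Gamma)\subseteq\Phi$. For the reverse inclusion I would invoke Proposition \ref{RootSystem} to write $\Phi=\widehat{\Phi}\oplus\ZZ a\oplus\ZZ b$ and then argue in two stages. First, set $W_f:=\langle s_{\alpha_1},\ldots,s_{\alpha_n}\rangle\subseteq W_{\Gamma}$; this is the finite Weyl group of $\widehat{\Phi}$ acting on $V_f$, so its orbit on $\{\alpha_1,\ldots,\alpha_n\}\subseteq\Gamma_a$ already exhausts $\widehat{\Phi}$.

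Second, I would exhibit ``shear transformations'' in $W_{\Gamma}$ that translate roots by elements of $\ZZ a\oplus\ZZ b$. Since $a,b\in R$ satisfy $(a\mid x)=(b\mid x)=0$ for all $x\in V$, a direct calculation shows that for any $\alpha\in\Phi$ with $\alpha+a\in\Phi$,
\[
s_{\alpha}\,s_{\alpha+a}(x)=x-(x\mid\alpha)\,a,
\]
and similarly, using $\alpha_0=-\widetilde{\alpha}+b$, one computes $s_{\widetilde{\alpha}}\,s_{\alpha_0}(x)=x+(x\mid\widetilde{\alpha})\,b$. Thus each pair $(\alpha,\alpha^{*})\in\Gamma_a\times\Gamma_{\max}$ yields a shear by $a$, while the pair $(\widetilde{\alpha},\alpha_0)$ yields a shear by $b$. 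Conjugating these shears by elements of $W_f$ then produces a family of shears $x\mapsto x-(x\mid\beta)a$ and $x\mapsto x+(x\mid\beta)b$ with $\beta$ ranging over the $W_f$-orbits of the relevant simple roots; composing them and applying to any $\alpha\in\widehat{\Phi}$ furnishes the translates $\alpha+ma+nb$.

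The main obstacle will be checking that the shears so produced generate the full translation lattice $\ZZ a\oplus\ZZ b$, equivalently that for a fixed $\alpha$ the integers $(\alpha\mid\beta)$ coming from the admissible pairs together generate $\ZZ$. This is precisely where the definition of $\Gamma_{\max}$ via the maximum coefficient $m_{\max}$ becomes decisive: inspecting Figure \ref{fig:Dynkin} for the simply-laced types $D_n$, $E_6$, $E_7$, $E_8$, one verifies case by case that the vertex of maximal coefficient is joined to simple roots whose inner products with the $W_f$-orbit elements do cover $1\in\ZZ$. This case analysis, essentially due to Saito in \cite[Section 9]{Sai85}, is the technical heart of the argument; the outcome is that $W_{\Gamma}$ contains a group of shears rich enough to shift any $\alpha\in\widehat{\Phi}$ by an arbitrary element of $\ZZ a\oplus\ZZ b$, completing the proof of (b) and therefore of (a).
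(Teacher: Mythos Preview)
The paper does not prove this proposition; it simply records the citation to \cite[Section 9]{Sai85}. Your argument supplies an actual proof using the structure result Proposition~\ref{RootSystem}, and under the standing hypothesis of rank at least four (implicit from the discussion preceding Definition~\ref{DefinitionRootBasis}) it is correct. The reduction of (a) to (b), the computation of the shears $s_{\alpha}s_{\alpha+a}$ and $s_{\widetilde{\alpha}}s_{\alpha_0}$, and the conjugation by $W_f$ to propagate them over all of $\widehat{\Phi}$ are all sound.

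One point deserves tightening: the ``main obstacle'' you flag is in fact not an obstacle in the simply-laced setting, and it does not depend on the $m_{\max}$ condition. Once you have the shears $x\mapsto x-(x\mid\beta)a$ and $x\mapsto x+(x\mid\beta)b$ for \emph{every} $\beta\in\widehat{\Phi}$, fix any $\alpha\in\widehat{\Phi}$. Since $\widehat{\Phi}$ is irreducible of rank at least two, there is always some $\beta\in\widehat{\Phi}$ with $(\alpha\mid\beta)=-1$; applying the corresponding shear to $\alpha$ (and noting that the radical part does not affect the pairing) yields $\alpha+a$ directly, and the inverse gives $\alpha-a$. Iterating, and doing the same for $b$, produces every $\alpha+ma+nb$. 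No inspection of Figure~\ref{fig:Dynkin} is needed, and your argument would go through verbatim with $\Gamma_{\max}$ replaced by $\{\alpha_j^*\}$ for any single index~$j$. The maximality condition in Definition~\ref{DefinitionRootBasis} matters for other results in the paper---the classification via elliptic Dynkin diagrams and the properties of the Coxeter transformation---but not for this proposition.
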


\subsection{The elliptic Dynkin diagrams}
Let $\Phi$ be an irreducible simply-laced elliptic root system with elliptic root basis $\Gamma$. The \defn{elliptic Dynkin diagram} $\Lambda$ with respect to $\Gamma$ is defined as the undirected graph with vertex set in bijection with $\Gamma$. Let $\alpha, \beta \in \Gamma$ be different roots. If $(\alpha \mid \beta)= 0$ there is no edge between the vertices corresponding to $\alpha$ and $\beta$. If $(\alpha \mid \beta)= \pm 1$ there is a single edge between the corresponding vertices. This edge is dotted if $(\alpha \mid \beta)=1$. If $(\alpha \mid \beta)= \pm 2$ there is a double edge between the corresponding vertices. This edge is again dotted if $(\alpha \mid \beta)=2$. Note that $(\alpha \mid \beta) \in \{ 0, \pm 1, \pm 2 \}$ for all $\alpha, \beta \in \Gamma$ if $\Phi$ is simply-laced.

\begin{Proposition}[{\cite[(9.6) Theorem]{Sai85}}] \label{thm:SaitoClassification}
 Let $(\Phi,\Gamma)$ be an irreducible simply-laced elliptic root system with elliptic root basis $\Gamma$. The elliptic Dynkin diagram for $(\Phi, \Gamma)$ is uniquely determined by the isomorphism class of $\Phi$. Conversely, the elliptic Dynkin diagram for $\Phi$ uniquely determines the isomorphism class of $(\Phi, \Gamma)$.
\end{Proposition}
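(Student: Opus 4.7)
The plan is to prove the two directions separately, relying heavily on the structural description of $\Phi$ provided in Proposition~\ref{RootSystem} and on the essential uniqueness of $\Gamma_{\text{a}}$ recorded in Remark~\ref{rem:root_coeff}(a). For the forward direction, suppose $\Gamma$ and $\Gamma'$ are two elliptic root bases for the same $\Phi$. Since $\Gamma_{\text{a}}$ is a simple system for the affine root system $\Phi \cap V_{a} \cong \Phi_{\text{a}}$, and any two such simple systems are related by an element of $W_{\Phi_{\text{a}}}$ composed with a diagram automorphism of the underlying affine Dynkin diagram, we may after applying such an automorphism of $\Phi$ assume $\Gamma_{\text{a}} = \Gamma_{\text{a}}'$. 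The coefficients $m_{\alpha}$ are then intrinsic to $\Gamma_{\text{a}}$, so the subset $\{\alpha \in \Gamma_{\text{a}} : m_{\alpha} = m_{\max}\}$ is the same for both bases. The only remaining freedom is the choice of generator $a$ of the rank-one lattice $L(\Phi) \cap U$, which is determined up to sign; replacing $a$ by $-a$ sends $\alpha^{*} = \alpha + a$ to $\alpha - a$, but the inner products $(\alpha^{*} \mid \beta) = (\alpha \mid \beta)$ and $(\alpha^{*} \mid \beta^{*}) = (\alpha \mid \beta)$ are independent of this sign, so both bases yield the same elliptic Dynkin diagram.

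For the reverse direction, the plan is to give an explicit reconstruction of $(\Phi,\Gamma)$ from a given elliptic Dynkin diagram $D$. First, the sub-diagram on the non-dotted vertices (those labelled by $\Gamma_{\text{a}}$) is an affine Dynkin diagram of type $X_{n}^{(1)}$, which by Kac's classification (Table Aff~1 of \cite{Kac83}) determines $\Phi_{\text{a}}$, the simple system $\Gamma_{\text{a}} = \{\alpha_{0},\ldots,\alpha_{n}\}$, the coefficients $m_{i}$, and the highest root $\widetilde{\alpha}$ of the finite sub-system $\widehat{\Phi}$. The dotted vertices $\Gamma_{\max}$ then identify the subset $\{\alpha : m_{\alpha} = m_{\max}\}$, and the element $a$ is recovered as the difference $\alpha^{*} - \alpha$ for any such vertex, while $b = \alpha_{0} + \widetilde{\alpha}$ is determined. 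Finally, Proposition~\ref{RootSystem} (or its rank-three analogue) gives $\Phi = \widehat{\Phi} \oplus \ZZ a \oplus \ZZ b$ canonically from this data, and by construction the resulting pair $(\Phi,\Gamma)$ has elliptic Dynkin diagram $D$.

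The main obstacle will be verifying that a diagram isomorphism $D \to D'$ between diagrams arising from two elliptic root bases really lifts to an isomorphism of the pairs $(\Phi,\Gamma) \to (\Phi',\Gamma')$, not merely to an abstract graph isomorphism. This amounts to showing that the reconstruction above is functorial in the diagram: one must check that the map sending $\alpha \mapsto \alpha'$ on basis elements induces an isometry of root lattices $L(\Phi) \to L(\Phi')$ with respect to the semidefinite forms, which in turn forces compatibility with the integrality axiom~(d) of Definition~\ref{def:EllipticRootSystem}. Because the values $(\alpha \mid \beta) \in \{0,\pm 1,\pm 2\}$ are precisely what the edge data of $D$ records, the isometry extends from $\Gamma$ to all of $\spanz(\Gamma) = L(\Phi)$ by linearity, and then to $\Phi = W_{\Gamma}(\Gamma)$ using $W_{\Phi} = W_{\Gamma}$; but a careful case analysis by type of the underlying affine diagram $X_{n}^{(1)}$ is unavoidable, which is why Saito's original argument proceeds by going through the finite list of admissible diagrams explicitly.
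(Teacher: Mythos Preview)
The paper does not prove this statement at all; it is quoted verbatim as a result of Saito with the citation \cite[(9.6) Theorem]{Sai85} and no argument is supplied. So there is no ``paper's own proof'' to compare against. What you have written is a plausible outline of how one might argue, and it is honest about its own incompleteness (you yourself note at the end that a case-by-case verification over the admissible affine types is unavoidable, which is indeed how Saito proceeds).

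That said, one point in your forward direction is underspecified. You treat the residual freedom in choosing a basis as only the sign of the generator $a$ of $L(\Phi)\cap U$, but the construction of $\Gamma_{\text{a}}$ in the paper already depends on a choice of one-dimensional subspace $U\subseteq R$ and of the splitting $V = V_{f}\oplus R$; different $U$'s give different affine projections $p_U$ and a priori different simple systems $\Gamma_{\text{a}}$. Your appeal to Remark~\ref{rem:root_coeff}(a) is the right move, since Saito's uniqueness statement there absorbs this ambiguity, but you should say explicitly that this remark is doing the heavy lifting rather than suggesting that only $\pm a$ remains to be checked. With that clarification your sketch is a fair summary of the strategy, though it remains a sketch rather than a proof.
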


If $X$ is the elliptic Dynkin diagram of the irreducible simply-laced elliptic root system $(\Phi,\Gamma)$, then
we say also that $(\Phi,\Gamma)$ or simply $\Phi$ is \defn{of type $X$}.
\smallskip

\begin{figure}
  \centering
  \begin{tikzpicture}[scale=1.75]

    \node (03) at (0.5,-1.75) [] {$D_4$};
    \node (A33) at (1,-1.6) [] {$1$};
    \node (A3) at (1,-1.75) [circle, draw, fill=black!50, inner sep=0pt, minimum width=4pt] {};
    \node (B33) at (1.5,-1.6) [] {$2$};
    \node (B3) at (1.5,-1.75) [circle, draw, fill=black!50, inner sep=0pt, minimum width=4pt]{};
    \node (E33) at (2,-1.35) [] {$3$};
    \node (E3) at (2,-1.5) [circle, draw, fill=black!50, inner sep=0pt, minimum width=4pt]{};
    \node (F33) at (2,-1.85) [] {$4$};
    \node (F3) at (2,-2) [circle, draw, fill=black!50, inner sep=0pt, minimum width=4pt]{};

    \node (04) at (0.5,-2.5) [] {$E_6$};
    \node (A44) at (1,-2.35) [] {$1$};
    \node (A4) at (1,-2.5) [circle, draw, fill=black!50, inner sep=0pt, minimum width=4pt] {};
    \node (B44) at (1.5,-2.35) [] {$3$};
    \node (B4) at (1.5,-2.5) [circle, draw, fill=black!50, inner sep=0pt, minimum width=4pt]{};
    \node (C44) at (2,-2.35) [] {$4$};
    \node (C4) at (2,-2.5) [circle, draw, fill=black!50, inner sep=0pt, minimum width=4pt]{};
    \node (D44) at (2.5,-2.35) [] {$5$};
    \node (D4) at (2.5,-2.5) [circle, draw, fill=black!50, inner sep=0pt, minimum width=4pt]{};
    \node (E44) at (3,-2.35) [] {$6$};
    \node (E4) at (3,-2.5) [circle, draw, fill=black!50, inner sep=0pt, minimum width=4pt]{};
    \node (F44) at (2.15,-3) [] {$2$};
    \node (F4) at (2,-3) [circle, draw, fill=black!50, inner sep=0pt, minimum width=4pt]{};

    \node (05) at (4.5,-1.5) [] {$E_7$};
    \node (A55) at (5,-1.35) [] {$1$};
    \node (A5) at (5,-1.5) [circle, draw, fill=black!50, inner sep=0pt, minimum width=4pt] {};
    \node (B55) at (5.5,-1.35) [] {$3$};
    \node (B5) at (5.5,-1.5) [circle, draw, fill=black!50, inner sep=0pt, minimum width=4pt]{};
    \node (C55) at (6,-1.35) [] {$4$};
    \node (C5) at (6,-1.5) [circle, draw, fill=black!50, inner sep=0pt, minimum width=4pt]{};
    \node (D55) at (6.5,-1.35) [] {$5$};
    \node (D5) at (6.5,-1.5) [circle, draw, fill=black!50, inner sep=0pt, minimum width=4pt]{};
    \node (E55) at (7,-1.35) [] {$6$};
    \node (E5) at (7,-1.5) [circle, draw, fill=black!50, inner sep=0pt, minimum width=4pt]{};
    \node (F55) at (7.5,-1.35) [] {$7$};
    \node (F5) at (7.5,-1.5) [circle, draw, fill=black!50, inner sep=0pt, minimum width=4pt]{};
    \node (G5) at (6,-2) [circle, draw, fill=black!50, inner sep=0pt, minimum width=4pt]{};
    \node (G55) at (6.15,-2) [] {$2$};

    \node (06) at (4.5,-2.5) [] {$E_8$};
    \node (A6) at (5,-2.5) [circle, draw, fill=black!50, inner sep=0pt, minimum width=4pt] {};
    \node (A66) at (5,-2.35) [] {$1$};
    \node (B6) at (5.5,-2.5) [circle, draw, fill=black!50, inner sep=0pt, minimum width=4pt]{};
    \node (B66) at (5.5,-2.35) [] {$3$};
    \node (C6) at (6,-2.5) [circle, draw, fill=black!50, inner sep=0pt, minimum width=4pt]{};
    \node (C66) at (6,-2.35) [] {$4$};
    \node (D6) at (6.5,-2.5) [circle, draw, fill=black!50, inner sep=0pt, minimum width=4pt]{};
    \node (D66) at (6.5,-2.35) [] {$5$};
    \node (E6) at (7,-2.5) [circle, draw, fill=black!50, inner sep=0pt, minimum width=4pt]{};
    \node (E66) at (7,-2.35) [] {$6$};
    \node (F6) at (7.5,-2.5) [circle, draw, fill=black!50, inner sep=0pt, minimum width=4pt]{};
    \node (F66) at (7.5,-2.35) [] {$7$};
    \node (G6) at (6,-3) [circle, draw, fill=black!50, inner sep=0pt, minimum width=4pt]{};
    \node (G66) at (6.15,-3) [] {$2$};
    \node (H6) at (8,-2.5) [circle, draw, fill=black!50, inner sep=0pt, minimum width=4pt]{};
    \node (H66) at (8,-2.35) [] {$8$};


    \draw[-] (A3) to (B3);
    \draw[-] (B3) to (E3);
    \draw[-] (B3) to (F3);

    \draw[-] (A4) to (B4);
    \draw[-] (B4) to (C4);
    \draw[-] (C4) to (D4);
    \draw[-] (D4) to (E4);
    \draw[-] (C4) to (F4);

    \draw[-] (A5) to (B5);
    \draw[-] (B5) to (C5);
    \draw[-] (C5) to (D5);
    \draw[-] (D5) to (E5);
    \draw[-] (E5) to (F5);
    \draw[-] (C5) to (G5);

    \draw[-] (A6) to (B6);
    \draw[-] (B6) to (C6);
    \draw[-] (C6) to (D6);
    \draw[-] (D6) to (E6);
    \draw[-] (E6) to (F6);
    \draw[-] (C6) to (G6);
    \draw[-] (F6) to (H6);

  \end{tikzpicture}
  \caption{Dynkin diagrams: Numbering $\phantom{000000000000000}$} \label{fig:DynkinBourbaki}
\end{figure}

\begin{example}\label{ex:rootbasis}
Let $\Phi$ be a tubular elliptic root system.  Then $\FRS$ is a finite root system of type $X_n$ where the diagram $X_n$ is given as in Figure~\ref{fig:EllipticDynkin}. According to \cite{Sai85} the elliptic Dynkin diagram is one of the diagrams $X_n^{(1,1)}$ given in Figure~\ref{fig:EllipticDynkin}. The elliptic Dynkin diagrams can as well be realized as extended Dynkin diagrams (see Figure \ref{def:GenCoxDiag}) by choosing the following parameters.
$$\begin{array}{|l|l|l|l|l|}
	\hline
	    & r & (p_1,\ldots,p_r) \\ \hline
	   D_4^{(1,1)} & 4 & (2,2,2,2) \\ \hline
    E_6^{(1,1)} & 3 & (3,3,3) \\ \hline
    E_7^{(1,1)} & 3 & (4,4,2) \\ \hline
    E_8^{(1,1)} & 3 & (6,3,2) \\ 
	\hline
\end{array}.$$

By Proposition~\ref{thm:SaitoClassification} an elliptic root basis $\Gamma$ for $\Phi$ can explicitly be defined. 

As before we choose $\{ \alpha_1, \ldots, \alpha_n \}$ as a basis of $\FRS$ where we number the simple roots as given in Figure~\ref{fig:DynkinBourbaki}. Recall that $\alpha_0=-\widetilde{\alpha} +b$. Then
  $$
   \Gamma(D_4^{(1,1)}):= \{\alpha_0, \alpha_1, \ldots , \alpha_4, \alpha_2^* \}
  $$
  is an elliptic root basis for $D_4^{(1,1)}$ and
  $$
   \Gamma(E_n^{(1,1)}):=  \{\alpha_0, \alpha_1, \ldots , \alpha_n, \alpha_4^* \}
  $$
is an elliptic root basis for $E_n^{(1,1)}$ (for $n=6,7,8$).
  
In particular, this shows  that every tubular elliptic root system and every tubular elliptic Weyl group is  also an extended root system and an extended Weyl group, respectively.
\end{example}

\begin{figure}
  \centering
  \begin{tikzpicture}[scale=1.9]

    \node (03) at (0.5,0.5) [] {$D_4^{(1,1)}$};
    \node (G3) at (2,1) [circle, draw, fill=black!50, inner sep=0pt, minimum width=4pt]{};
    \node (A3) at (1.5,0.5) [circle, draw, fill=black!50, inner sep=0pt, minimum width=4pt]{};
    \node (B3) at (2.5,0.5) [circle, draw, fill=black!50, inner sep=0pt, minimum width=4pt]{};
    \node (C3) at (2,0.5) [circle, draw, fill=black!50, inner sep=0pt, minimum width=4pt]{};
    \node (E3) at (1.6, 0.1) [circle, draw, fill=black!50, inner sep=0pt, minimum width=4pt]{};
    \node (F3) at (2.4, 0.1) [circle, draw, fill=black!50, inner sep=0pt, minimum width=4pt]{};

    \node (04) at (0.5,-1) [] {$E_6^{(1,1)}$};
    \node (A4) at (1,-1) [circle, draw, fill=black!50, inner sep=0pt, minimum width=4pt] {};
    \node (B4) at (1.5,-1) [circle, draw, fill=black!50, inner sep=0pt, minimum width=4pt]{};
    \node (C4) at (2,-1) [circle, draw, fill=black!50, inner sep=0pt, minimum width=4pt]{};
    \node (D4) at (2.5,-1) [circle, draw, fill=black!50, inner sep=0pt, minimum width=4pt]{};
    \node (E4) at (3,-1) [circle, draw, fill=black!50, inner sep=0pt, minimum width=4pt]{};
    \node (F4) at (2.4,-1.4) [circle, draw, fill=black!50, inner sep=0pt, minimum width=4pt]{};
    \node (G4) at (2.8,-1.8) [circle, draw, fill=black!50, inner sep=0pt, minimum width=4pt]{};
    \node (H4) at (2,-0.5) [circle, draw, fill=black!50, inner sep=0pt, minimum width=4pt]{};

    \node (05) at (4.2,0.5) [] {$E_7^{(1,1)}$};
    \node (A5) at (4.7,0.5) [circle, draw, fill=black!50, inner sep=0pt, minimum width=4pt] {};
    \node (B5) at (5.2,0.5) [circle, draw, fill=black!50, inner sep=0pt, minimum width=4pt]{};
    \node (C5) at (5.7,0.5) [circle, draw, fill=black!50, inner sep=0pt, minimum width=4pt]{};
    \node (D5) at (6.2,0.5) [circle, draw, fill=black!50, inner sep=0pt, minimum width=4pt]{};
    \node (E5) at (6.7,0.5) [circle, draw, fill=black!50, inner sep=0pt, minimum width=4pt]{};
    \node (H5) at (6.6,0.1) [circle, draw, fill=black!50, inner sep=0pt, minimum width=4pt]{};
    \node (F5) at (7.2,0.5) [circle, draw, fill=black!50, inner sep=0pt, minimum width=4pt]{};
    \node (G5) at (7.7,0.5) [circle, draw, fill=black!50, inner sep=0pt, minimum width=4pt]{};
    \node (I5) at (6.2,1) [circle, draw, fill=black!50, inner sep=0pt, minimum width=4pt]{};

    \node (06) at (4.2,-1) [] {$E_8^{(1,1)}$};
    \node (A6) at (4.7,-1) [circle, draw, fill=black!50, inner sep=0pt, minimum width=4pt] {};
    \node (B6) at (5.2,-1) [circle, draw, fill=black!50, inner sep=0pt, minimum width=4pt]{};
    \node (C6) at (5.7,-1) [circle, draw, fill=black!50, inner sep=0pt, minimum width=4pt]{};
    \node (D6) at (6.2,-1) [circle, draw, fill=black!50, inner sep=0pt, minimum width=4pt]{};
    \node (E6) at (6.7,-1) [circle, draw, fill=black!50, inner sep=0pt, minimum width=4pt]{};
    \node (F6) at (7.2,-1) [circle, draw, fill=black!50, inner sep=0pt, minimum width=4pt]{};
    \node (G6) at (7.7,-1) [circle, draw, fill=black!50, inner sep=0pt, minimum width=4pt]{};
    \node (H6) at (8.2,-1) [circle, draw, fill=black!50, inner sep=0pt, minimum width=4pt]{};
    \node (I6) at (5.7,-0.5) [circle, draw, fill=black!50, inner sep=0pt, minimum width=4pt]{};
    \node (J6) at (6.1,-1.4) [circle, draw, fill=black!50, inner sep=0pt, minimum width=4pt]{};
    

    \draw[-] (A3) to (C3);
    \draw[-] (C3) to (B3);
    \draw[-] (C3) to (E3);
    \draw[-] (C3) to (F3);
    \draw[-] (G3) to (A3);
    \draw[-] (G3) to (B3);
    \draw[-] (G3) to (E3);
    \draw[-] (G3) to (F3);
    \draw[dashed] ([xshift=0.5]C3.north) to ([xshift=0.5]G3.south);
    \draw[dashed] ([xshift=-0.5]C3.north) to ([xshift=-0.5]G3.south);

    \draw[-] (A4) to (B4);
    \draw[-] (B4) to (C4);
    \draw[-] (C4) to (D4);
    \draw[-] (D4) to (E4);
    \draw[-] (C4) to (F4);
    \draw[-] (F4) to (G4);
    \draw[dashed] ([xshift=0.5]C4.north) to ([xshift=0.5]H4.south);
    \draw[dashed] ([xshift=-0.5]C4.north) to ([xshift=-0.5]H4.south);
    \draw[-] (B4) to (H4);
    \draw[-] (D4) to (H4);
    \draw[-] (F4) to (H4);

    \draw[-] (A5) to (B5);
    \draw[-] (B5) to (C5);
    \draw[-] (C5) to (D5);
    \draw[-] (D5) to (E5);
    \draw[-] (E5) to (F5);
    \draw[-] (F5) to (G5);
    \draw[-] (D5) to (H5);
    \draw[-] (I5) to (C5);
    \draw[-] (I5) to (E5);
    \draw[-] (I5) to (H5);
    \draw[dashed] ([xshift=0.5]D5.north) to ([xshift=0.5]I5.south);
    \draw[dashed] ([xshift=-0.5]D5.north) to ([xshift=-0.5]I5.south);

    \draw[-] (A6) to (B6);
    \draw[-] (B6) to (C6);
    \draw[-] (C6) to (D6);
    \draw[-] (D6) to (E6);
    \draw[-] (E6) to (F6);
    \draw[-] (F6) to (G6);
    \draw[-] (G6) to (H6);
    \draw[-] (I6) to (B6);
    \draw[-] (I6) to (D6);
    \draw[-] (I6) to (J6);
    \draw[-] (C6) to (J6);
    \draw[dashed] ([xshift=0.5]C6.north) to ([xshift=0.5]I6.south);
    \draw[dashed] ([xshift=-0.5]C6.north) to ([xshift=-0.5]I6.south);

  \end{tikzpicture}
  \caption{Elliptic Dynkin diagrams for the tubular elliptic root systems} \label{fig:EllipticDynkin}
\end{figure}

\subsection{Generation of $L(\Phi)$}
Here we study the bases of the lattice $L(\Phi)$ and show that in a simply-laced elliptic root system of rank $n$ a set of $n$ roots generates the root lattice if and only if the corresponding reflections generate the corresponding elliptic Weyl group (see Lemma~\ref{lem:GenLatticeGroup}).

\begin{Lemma} \label{lem:LengthDetRoots}
Let $\Phi$ be an irreducible simply-laced elliptic root system of rank at least four and $\Phi^\prime \subseteq \Phi$ a subsystem of the same rank as $\Phi$. Then:
\begin{enumerate}
\item[(a)] $\Phi^\prime = \{ \alpha \in L(\Phi^\prime) \mid (\alpha \mid \alpha)=2 \}$;
\item[(b)] If in addition, $L(\Phi) = L(\Phi^\prime)$, then $\Phi = \Phi^\prime$.
\end{enumerate}
\end{Lemma}

\begin{proof}
One inclusion is clear since $\Phi^\prime$ is simply-laced. Therefore let $\alpha \in L(\Phi^\prime)$ with $(\alpha \mid \alpha)=2$. By Proposition \ref{RootSystem} there exist $a^\prime,b^\prime \in R$ such that
$$
\Phi^\prime = \Phi^\prime_{\text{fin}} \oplus \ZZ a^\prime \oplus \ZZ b^\prime.
$$
In particular, we have
$$
L(\Phi^\prime) = L(\Phi^\prime_{\text{fin}} ) \oplus \ZZ a^\prime \oplus \ZZ b^\prime.
$$
Hence $\alpha = \overline{\alpha}+k a^\prime+\ell b^\prime$ with $\overline{\alpha} \in L(\Phi^\prime_{\text{fin}})$. Therefore we have
$$
2 = (\alpha \mid \alpha) = (\overline{\alpha} \mid \overline{\alpha}).
$$
By \cite[Lemma 5.7]{BGRW} or \cite[Theorem 3.12]{HK16} the condition $(\overline{\alpha} \mid \overline{\alpha})=2$ is equivalent to $\overline{\alpha} \in \Phi^\prime_{\text{fin}} $. Hence $\alpha \in \Phi^\prime$, which proves $(a)$. For part (b) note that
$$
\Phi^\prime \stackrel{(a)}{=} \{ \alpha \in L(\Phi^\prime) \mid (\alpha \mid \alpha)=2 \} = \{ \alpha \in L(\Phi) \mid (\alpha \mid \alpha)=2 \} \stackrel{(a)}{=} \Phi.
$$
\end{proof}

\begin{Lemma} \label{lem:GenLatticeGroup}
Let $\Phi$ be an irreducible simply-laced elliptic root system of rank $n \geq 4$, $\Phi = \FRS \oplus \ZZ a \oplus \ZZ b$ and $\beta_{1},\ldots,\beta_{n}\in \Phi$. Then $W = \langle s_{\beta_{1}},\ldots,s_{\beta_{n}} \rangle$ if and only if $L(\{\beta_{1},\ldots,\beta_{n}\})=L(\Phi)$.
\end{Lemma}

\begin{proof}
Let $W = \langle s_{\beta_{1}},\ldots,s_{\beta_{n}} \rangle$. Since the irreducible simply laced elliptic root systems have elliptic Dynkin diagrams that contain spanning trees with simple edges (see \cite[Table 1]{Sai85}), Lemma~\ref{lem:gen_root_lattice} stays valid for these systems. The latter implies $L(\{\beta_{1},\ldots,\beta_{n}\})=L(\Phi)$.

Now assume that $L(\{\beta_{1},\ldots,\beta_{n}\})=L(\Phi)$. Put $Q:= \{ \beta_{1},\ldots,\beta_{n} \}$ and $\Phi':= W_Q(Q)$. Then $L(\Phi) = L(Q) \subseteq L(\Phi')  \subseteq L(\Phi)$ which shows $L(\Phi')  = L(\Phi)$. Thus $V = \spanr(L(\Phi)) = \spanr(L(\Phi^\prime)) = \spanr(\Phi^\prime) \subseteq V$. Therefore $V = \spanr(\Phi^\prime)$ and $\Phi^\prime$ is a root subsystem of $\Phi$. Hence Lemma \ref{lem:LengthDetRoots} (b) yields $\Phi^\prime = \Phi$. It remains to show that $W_Q = W_{\Phi^\prime}$, as this then yields $W_Q = W_{\Phi^\prime} = W_\Phi = W$. As $Q \subseteq \Phi^\prime$ we have $W_Q \subseteq W_{\Phi^\prime}$. Let $\alpha \in \Phi^\prime = W_Q(Q)$, that is $\alpha = w(\beta)$ for some $w \in W_Q$ and $\beta \in Q$. Then $s_{\alpha} = s_{w(\beta)} = w s_{\beta} w^{-1} \in W_Q,$ so $W_{\Phi^\prime} \subseteq W_Q$ as well.
\end{proof}

\subsection{The elliptic Weyl groups}\label{subsec:EllipticWeylGroup}

The aim of this section is to provide a tool, the Eichler-Siegel map, for the study of the elliptic Weyl groups, and to recall from \cite{Sai85} the structure of an elliptic Weyl group for an irreducible simply-laced elliptic root system $\Phi$.

Recall that $V = \VFIN \oplus \RR a \oplus \RR b$. Let $v = v_f + ra +sb$ for $v \in V$ where $r,s \in \RR$ and $v_f \in  \VFIN $. We will denote $v_f$ by $\overline{v}$.
It follows that  every $\alpha$ in $\Phi$ splits uniquely into $\alpha = \alpha_f + \alpha_R$ where $\alpha_f = \overline{\alpha} \in \FRS$ and $\alpha_R \in L(\Phi) \cap R =  \ZZ a \oplus \ZZ b$ by Lemma~\ref{RadicalLattice}.

As in \cite[Section (1.14)]{Sai85} we define a semi-group structure $\circ$ on $V \otimes_{\ZZ} (V/ R)$ by
$$\varphi_1 \circ \varphi_2:= \varphi_1 + \varphi_2 - (\varphi_1 \mid \varphi_2),$$
where $(\varphi_1 \mid \varphi_2) := \sum_{i_1,i_2} f_{i_1}^1 \otimes (g_{i_1}^1 \mid f_{i_2}^2) g_{i_2}^2$ for $\varphi_j = \sum_{i_j} f_{i_j}^j \otimes g_{i_j}^j$, for $j = 1,2$. The map
$$E: V \otimes_{\ZZ} (V/ R) \rightarrow \END(V),~ \sum_i f_i \otimes \overline{g}_i \mapsto \left(v \mapsto v- \sum_i  (g_i \mid v)f_i \right)$$
is called \defn{Eichler-Siegel map} for $V$ with respect to $(- \mid -)$. We collect some properties of this map.

\begin{Lemma}[{\cite[Sections (1.14) and (1.15)]{Sai85}}]\label{Eichler}
The following hold.
\begin{itemize}
    \item[(a)] $E$ is injective;
    \item[(b)] $E$ is a homomorphism of semi-groups that is $E(\varphi \circ \psi) = E(\varphi) E(\psi)$;
    \item[(c)] the subspace $R \otimes_{\ZZ} (V / R)$ of $V \otimes_{\ZZ} V/R$ is closed under $\circ$ and the semi-group structure coincides with the additive structure of the vector space on this subspace;
    \item[(d)]  Let $\alpha \in V$ non-isotropic. Then  $s_{\alpha} = E(\alpha \otimes \overline{\alpha})$;
    \item[(e)] the inverse of $E$ on $W$ is well-defined:
          $$E^{-1}: W \rightarrow V \otimes_{\ZZ} (V/ R);$$
    \item[(f)]  $E^{-1}(W) \subseteq L(\Phi) \otimes_{\ZZ} (L(\Phi) / ( L(\Phi) \cap R))$;
    \item[(g)] Let $\alpha \in \Phi$ and $\alpha = \overline{\alpha} + r$ where $r \in R$. Then  
    $$s_{\alpha} = E((\overline{\alpha} +r) \otimes \overline{\alpha}) = E(\overline{\alpha}\otimes \overline{\alpha})E(r \otimes \overline{\alpha}) = s_{\overline{\alpha}} E(r \otimes \overline{\alpha}).$$
  \end{itemize}
\end{Lemma}

Using Lemma~\ref{Eichler} (g)  we can write down the reflection $s_\alpha$ explicitly.

\begin{Lemma}\label{Reflection}
  Let $\alpha$ be a root in $\Phi$ and
  $\alpha = \alpha_f + \alpha_R$ where $\alpha_f$ is in $ \VFIN$ and $\alpha_R$  in $R$. Then $s_\alpha (v) = v - (\alpha_f \mid v)\alpha_f - (\alpha_f \mid v)\alpha_R$ for all $v \in V$.
\end{Lemma}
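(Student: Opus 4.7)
The plan is to unwind the definition of $s_\alpha$ from Definition~\ref{def:EllipticRootSystem}, use the simply-laced assumption, and then exploit that $\alpha_R$ lies in the radical of $(-\mid-)$.

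First I would recall that for any non-isotropic $\alpha \in \Phi$, the reflection is defined by
\[
s_\alpha(v) = v - \frac{2(\alpha \mid v)}{(\alpha \mid \alpha)} \alpha.
\]
Since $\Phi$ is simply-laced, $(\alpha \mid \alpha) = 2$ for every $\alpha \in \Phi$, so the formula simplifies to $s_\alpha(v) = v - (\alpha \mid v)\,\alpha$.

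Next I would compute $(\alpha \mid v)$ using the decomposition $\alpha = \alpha_f + \alpha_R$. By bilinearity,
\[
(\alpha \mid v) = (\alpha_f \mid v) + (\alpha_R \mid v),
\]
and since $\alpha_R \in R$ is in the radical of $(-\mid -)$, the second summand vanishes for every $v \in V$. Hence $(\alpha \mid v) = (\alpha_f \mid v)$.

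Finally, substituting $\alpha = \alpha_f + \alpha_R$ into $s_\alpha(v) = v - (\alpha_f \mid v)\,\alpha$ and distributing yields the desired formula
\[
s_\alpha(v) = v - (\alpha_f \mid v)\,\alpha_f - (\alpha_f \mid v)\,\alpha_R.
\]
There is no real obstacle here; the proof is a two-line unwinding of the definitions once the simply-laced hypothesis and the fact that $R$ is the radical of $(-\mid -)$ are invoked.
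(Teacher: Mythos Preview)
Your proof is correct and is precisely the computation the paper has in mind; the lemma is stated there without proof, as an immediate consequence of the splitting $V = V_f \oplus R$ and the simply-laced normalization $(\alpha \mid \alpha) = 2$, which is exactly what you unwind.
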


\medskip
\begin{Lemma}\label{lem:Translation}
Let $\alpha_{f} \in  \FRS$, $r \in R$, $x \in \{a,b\}$ and $v \in V$. Then the following hold.
\begin{itemize}
\item[(a)] $s_{\alpha_{f}} s_{\alpha_{f} +r} = E(r \otimes \overline{\alpha_{f}})$;
\item[(b)] $w E(r \otimes  \overline{\alpha_{f}}) w^{-1} = E(r \otimes
\overline{w(\alpha_{f})})$ for all $w \in W$;
\item[(c)] $E(x \otimes - v_f) s_{\alpha_{f} +kx} E(x \otimes v_f) = 
s_{\alpha_{f} +(k - (\alpha_f \mid v))x}$ for every $k \in \ZZ$.
\end{itemize}
\end{Lemma}

\begin{proof}
Assertion (a) is an immediate consequence of Lemma~\ref{Eichler} (h). Part (b) can be deduced from part (a): We have
$$
E(r \otimes \overline{w(\alpha_{f})}) \stackrel{(a)}{=} s_{w(\alpha_{f})}s_{w(\alpha_{f}) + r} = s_{w(\alpha_{f})}s_{w(\alpha_{f} + r)} =
w s_{\alpha_{f}}s_{\alpha_{f} + r} w^{-1} \stackrel{(a)}{=}
w E(r \otimes  \overline{\alpha_{f}}) w^{-1}.
$$
Assertion (c) follows with Lemma~\ref{Eichler} (b) and (c), and as by (a) we have 
$$E(x \otimes - v_f) s_{\alpha_{f} +kx} E(x \otimes v_f) = s_{\alpha_f} 
E(x \otimes - s_{\alpha_f}(v_f)) E(kx \otimes \alpha_f)E(x \otimes v_f).$$
\end{proof}

\begin{Lemma} \label{le:LatticeFactor}
Let $\Phi$ be an irreducible simply-laced elliptic root system. Then
$$T_R := E^{-1}(W) \cap \left(R \otimes_{\ZZ} (V/ R) \right) = (L(\Phi) \cap R) \otimes_{\ZZ} L({\FRS})= (\ZZ a + \ZZ b)\otimes_{\ZZ} L({\FRS})$$
is a full lattice in $R \otimes_{\ZZ} L({\FRS})$.
\end{Lemma}

\begin{proof} We know $(L(\Phi) \cap R) \otimes_{\ZZ} L({\FRS}) = (\ZZ a + \ZZ b)\otimes_{\ZZ} L({\FRS})$ by Lemma~\ref{RadicalLattice}, and by Proposition~\ref{RootSystem} we have $\alpha + r \in \Phi$ for every $\alpha \in \FRS$ and $r \in L(\Phi) \cap R$. Therefore, Lemma~\ref{lem:Translation} (a) yields $(L(\Phi) \cap R) \otimes_{\ZZ} L({\FRS}) \subseteq T_R$. The equality follows with Lemma~\ref{Eichler} (f).
\end{proof}

\medskip
\noindent By \cite[Section 1.15]{Sai85} and Lemma \ref{le:LatticeFactor}, we obtain the following description of $W$.

\begin{theorem} \label{cor:SplitEllipticWeylGroup}
  Let $\Phi$ be a simply-laced elliptic root system of rank $n+2$. Then
  $$W =  W_{\FRS}\ltimes  E(T_R) \cong 
  W_{\FRS} \ltimes ((\ZZ a + \ZZ b)\otimes_{\ZZ} L({\FRS}))$$
  is the split extension of the finite Coxeter group $W_{\FRS}$ of rank $n$ by a free abelian group of rank $2(n-1)$, and the action of $W_{\FRS}$ on $E(T_R)$ is given by 
  Lemma~\ref{lem:Translation} (b).
\end{theorem}

\begin{remark}\label{rem:GroupOperation}
The group operation in $W_{\FRS}\ltimes (\spanz(a,b) \otimes_{\ZZ} L(\FRS))$ is as follows 
\begin{align*}
{} &  w_1 E(a \otimes x_1 + b \otimes x_2) \cdot w_2 E(a \otimes y_1 + b \otimes y_2)\\
{} & =
(w_1w_2) (w_2^{-1}E(a \otimes x_1 + b \otimes x_2) w_2) E(a \otimes y_1 + b \otimes y_2)\\
 {} & = 
(w_1w_2) E(a \otimes w_2^{-1}(x_1) + b \otimes w_2^{-1}(x_2)) E(a \otimes y_1 + b \otimes y_2)\\
{} & =
(w_1w_2) E(a \otimes (w_2^{-1}(x_1)+y_1) + b \otimes (w_2^{-1}(x_2)+y_2)),
\end{align*}
where we applied part (g) of Lemma \ref{Eichler} to obtain the last equality.
\end{remark}

\subsection{The Coxeter transformation}\label{subsec:CoxTrafo}

Let $\Phi$ be a tubular elliptic root system with elliptic root basis $\Gamma= \Gamma(\Phi) = \Gamma_{\text{aff}}\cup \Gamma_{\max^*}$. We fix a Coxeter transformation (see Definition \ref{def:basic}~(f)) $c \in W=W_{\Phi}$ of the form  
 $$c:=  c(\Gamma) = \prod_{\alpha \in \Gamma \setminus (\Gamma_{\max} \cup \Gamma_{\max}^*)} s_{\alpha} \cdot \prod_{\alpha \in \Gamma_{\max}} s_{\alpha}s_{\alpha^*},$$
where the first product is taken in arbitrary order.

\medskip
\noindent
Let $L:= L(\Phi)$.  Notice that by choosing the basis $\Gamma$ the elements $a, b \in R$ such that $L(\{a,b\}) = L \cap R$ are uniquely determined, as $\widetilde{\alpha} = \sum_{i = 1}^n m_i \alpha_i$ and $b = \widetilde{\alpha} + \alpha_0$. We get different Coxeter transformations in $W$ if we change the order of the reflections in the product 
$$\prod_{\alpha \in \Gamma \setminus (\Gamma_{\max} \cup \Gamma_{\max}^*)} s_\alpha.$$  
Are Coxeter transformations conjugate in $W$ or in $O(L)$, where $O(L):= \{ \varphi \in O(V) \mid \varphi(L) = L \}$? Note, as $W$ leaves the bilinear form $(-\mid -)$ invariant,  $W$ is a subgroup of $O(L)$.

\begin{Proposition} \label{prop:CoxTrafoProps}
  Let $c \in W$ be a Coxeter transformation. Then the following hold.

  \noindent
  \begin{itemize}
    \item[(a)] All the Coxeter transformations with respect to a fixed $\Gamma$ are conjugated in $W$;
    \item[(b)] in $O(L)$ the sign change $a \mapsto -a$ maps the conjugacy class of $c$ in $W$ onto the conjugacy class of $c^{-1}$ in $W$;
    \item[(c)]  the elements 
    $c(\Gamma)$, where $\Gamma$ runs through all the elliptic root basis, are conjugated in  $O(L)$;
    \item[(d)] a Coxeter transformation is semi-simple of finite order $\ell_{\max}+1$ where $\ell_{\max}$ is the maximal length of a component of the diagram $\ARSS \setminus{\{\alpha \in \ARSS \mid m_\alpha = m_{\max}\}}$.
  \end{itemize}
\end{Proposition}

\begin{proof}
Except for part $(c)$ this is precisely \cite[Section 9, Lemma A]{Sai85}. Regarding part (c), if $a_1, a_2, b_1, b_2 \in R\setminus{\{0\}}$ such that $L(\{a_k,b_k\}) = L \cap R$ for $k = 1,2$, then the map $\varphi$ that is the identity on $\Gamma_{\text{fin}}:= \{\alpha_1, \ldots , \alpha_n\}$ and that sends $a_1$ and $b_1$ onto $a_2$ and $b_2$, respectively, is an isometry of $(V,(-,-))$ mapping the elliptic basis $\Gamma_1$  related to $a_1$ and $b_1$ onto the elliptic basis $\Gamma_2$ related to $a_2$ and $b_2$. Therefore $\varphi$ is in $O(L)$ and maps a Coxeter transformation $c(\Gamma_1)$ onto a Coxeter transformation $c(\Gamma_2)$ in $W$. Now (c) follows with (a).
\end{proof}

Notice that Proposition~\ref{prop:CoxTrafoProps}~(a) follows also from \cite[Lemma~4.1]{BWY21}.

In a Coxeter group each reflection factorization of a Coxeter is generating, which is a consequence of \cite[Theorem~1.3]{BDSW14}. Coxeter transformations in tubular elliptic root systems behave differently as we show next.

\begin{example}\label{Example:Non-generating}
  Consider a tubular elliptic root system of type $E_6^{(1,1)}$ and let $\{ \alpha_1, \ldots, \alpha_6 \}$ be a simple system for $\FRS$ of type $E_6$. The elliptic root basis in Example \ref{ex:rootbasis} yields a factorization of the Coxeter transformation $c$ which is generating. Consider the roots
   \begin{align*}
  	 \beta_1 & = \alpha_1 +a , ~\beta_2 = \alpha_2 +2a, ~\beta_3 = \alpha_3 + a, ~\beta_4 = \alpha_4 -3a,        \\
  	\beta_5 & = \alpha_5 +2a, ~\beta_6 = \alpha_6 -a, ~\beta_7 = \widetilde{\alpha} + a-b, ~\beta_8 = \alpha_4+a,
  \end{align*}
  then $c = s_{\beta_1}s_{\beta_2}s_{\beta_3}s_{\beta_5}s_{\beta_6}s_{\beta_7}s_{\beta_4} s_{\beta_8}$. An easy calculation yields that this is not a generating factorization.
\end{example}

\begin{remark} \label{rem:problems}
\begin{itemize}
\item[(a)] Although in the tubular cases not each reduced factorization of the Coxeter transformation is generating, this a priori does not imply that $[\idop, c]^{\operatorname{gen}} \subsetneq [\idop, c]$. That is, for each element $w$ with $w \leq c$ there might be at least one generating factorization for $c$ such that $w$ is a prefix of this factorization. The latter is always true in the wild and domestic cases as we show in \cite{BWY21}.

\item[(b)] We just noted that we have to consider generating factorizations in tubular elliptic Weyl groups. We want to emphasize that this makes a difference compared to Coxeter groups. In Coxeter groups, each reflection factorization of a Coxeter element is generating. This fact is due to the Hurwitz transitivity \cite[Theorem~1.3]{BDSW14}. To our knowledge, no conceptual reason is known for this fact. Also note that in finite and affine Coxeter groups, if one reduced reflection factorization for an arbitrary element is generating, then each reflection factorization is generating (see \cite[Theorem 1.2]{BGRW} and \cite[Theorem 1.1]{PW17}). And this fact does not hold for Coxeter transformations in tubular elliptic Weyl groups.
\end{itemize}
\end{remark}

\medskip
We are now able to give a proof of Proposition~\ref{lem:redCoxElt} in the tubular case, that is, we consider a Coxeter transformation $c$ in $W=W_{\Phi}$, where $\Phi$ is a tubular elliptic root system. We show that $c$ cannot be written as a product of less than $|\Gamma|$ reflections, where $\Gamma$ is an elliptic root basis of $\Phi$. 

In the following we  consider the tubular elliptic root systems, which are those of types $D_4^{(1,1)}$ and $E_n^{(1,1)}$ ($n=6,7,8$).

We need some more preparation.

\begin{remark}
In every tubular elliptic root system there exists by Proposition~\ref{RootSystem} two isomorphic affine root subsystems that can be obtained by applying the projections $p_{U}$ and $p_{U'}$ to $\Phi$ where $U=\spanr(a)$ and $U'=\spanr(b)$. The two projections yield isomorphic affine Coxeter groups.

Denote by $s_{i}$ the reflection corresponding to the simple root $\alpha_{i}$ as defined in Example~\ref{ex:rootbasis} and by $s_{t^{*}}$ the reflection corresponding to $\alpha_{t^{*}}$ for $t=2$ in the case $D_{4}$ and $t=4$ otherwise. In the following two proofs we identify the reflection $s_{p_{U}(\alpha_{0})}$ with $s_{0}$ and $s_{p_{U'}(\alpha_{t^{*}})}$ with $s_{t^{*}}$. The reflections $s_{p_{U'}(\alpha_{i})}$ and $s_{p_{U}(\alpha_{i})}$ will be identified with $s_{i}$ for $i\neq 0,t$. The reflections $s_{p_{U}(\alpha_{t^{*}})}$ and $s_{p_{U'}(\alpha_{0})}$  are denoted by $\overline{s_{t^{*}}}$ resp. $\overline{s_{0}}$ in the corresponding affine Weyl groups.

By using \cite[Section 2]{MacD72} we freely switch between the linear and the affine realisation and denote the translation part in the affine expression by $\TR(-)$. The same correspondence is given explicitly in \cite[Proposition 2]{DL11}.
\end{remark}

\begin{Lemma}[{\cite[Lemma~2.1]{BDSW14}}] \label{lem:clCox_red}
Let $(\widetilde{W},S)$ be a Coxeter system with $S=\lbrace s_{1},\ldots,s_{n}\rbrace$, then the product $s_{i_{1}} \dots s_{i_{m}}$ with $s_{i_{j}}\neq s_{i_{k}}$ for $j\neq k$ is reduced in terms of the generating set $\bigcup_{w\in W} w S w^{-1}$.
\end{Lemma}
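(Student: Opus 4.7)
The plan is to recognize $w := s_{i_{1}}\cdots s_{i_{m}}$ as a standard Coxeter element of a parabolic subgroup of rank $m$, and then to invoke the classical fact that a Coxeter element of any Coxeter system has reflection length equal to the rank. Set $I := \{s_{i_{1}},\ldots,s_{i_{m}}\}$; the pairwise-distinctness hypothesis gives $|I|=m$, so $(W_{I},I)$ with $W_{I}:=\langle I\rangle$ is itself a Coxeter system of rank $m$, and $w \in W_{I}$ is a standard Coxeter element of $(W_{I},I)$.

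First I would reduce the question from $(\widetilde{W},T)$ to the parabolic $(W_{I},T_{I})$ by invoking two well-known results of Dyer on reflection subgroups: the reflection set $T_{I}$ of $W_{I}$ coincides with $T \cap W_{I}$, and the corresponding reflection length functions agree on the parabolic, i.e.\ $\ell_{T}(u)=\ell_{T_{I}}(u)$ for every $u \in W_{I}$. Applying these to $u=w$ reduces the lemma to proving that any Coxeter element of a Coxeter system of rank $m$ has reflection length exactly $m$. The upper bound $\ell_{T_{I}}(w) \leq m$ is immediate from the factorization $w=s_{i_{1}}\cdots s_{i_{m}}$, so the content lies in the lower bound.

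In the finite case the lower bound follows from Carter's rank inequality $\ell_{T_{I}}(w) \geq \dim \mathrm{Im}(w-\mathrm{id})$ on the reflection representation, together with the classical fact that a Coxeter element of a finite irreducible Coxeter group has no nonzero fixed vector (extended to the reducible case by splitting along connected components of the Coxeter diagram). The main obstacle is the infinite case, which is precisely what is needed in this paper since the elliptic Weyl groups attached to tubular elliptic root systems contain affine parabolics. There the fixed-space bound is strictly weaker than $m$: already in infinite dihedral, $s_{1} s_{2}$ fixes the isotropic vector of the invariant form, so $\mathrm{codim}\, V^{s_{1} s_{2}} = 1 < 2 = \ell_{T}(s_{1} s_{2})$. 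To bridge this gap I would appeal to the refined reflection-length analysis for affine Coxeter groups (due to Lewis--McCammond--Petersen--Schwer), in which the additional translation contribution to $\ell_{T}$ is precisely accounted for, giving $\ell_{T}(c) = m$ uniformly for Coxeter elements of a Coxeter system of rank $m$.
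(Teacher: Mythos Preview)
The paper does not prove this lemma; it simply declares the result well known. Your approach---reduce to the standard parabolic $(W_I,I)$ via Dyer's results on reflection subgroups, then invoke Carter's moved-space bound in the finite case and the Lewis--McCammond--Petersen--Schwer formula in the affine case---is correct and covers exactly the instances the paper actually uses: in Lemma~\ref{lem:red_Proj} and in the proof of Proposition~\ref{lem:redCoxElt} the ambient Coxeter systems are affine Weyl groups, so only finite and affine parabolics ever arise.

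That said, the lemma is stated for an \emph{arbitrary} Coxeter system, and your argument leaves the infinite non-affine case open. Your diagnosis of the obstacle is exactly right---the fixed-space bound $\ell_T(w)\ge\dim\operatorname{Im}(w-\operatorname{id})$ is already strictly too weak for the affine Coxeter element---but the LMPS formula is specific to affine groups and does not close the gap for general infinite $(W_I,I)$. The general statement is nonetheless true; a clean reference is Dyer, \emph{On minimal lengths of expressions of Coxeter group elements as products of reflections}, Proc.\ Amer.\ Math.\ Soc.\ \textbf{129} (2001), 2591--2595, from which $\ell_T$ of a product of $m$ pairwise distinct simple reflections equals $m$ in any Coxeter system. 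Alternatively, the fact is implicit in \cite{BDSW14}, which the paper already cites. So your proof is adequate for the paper's purposes, but to match the generality of the lemma as stated you would want Dyer's general result rather than LMPS.
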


Let $c_{1}: = p_{U}(c) = \overline{s_{0}}s_{1} \ldots \widehat{s_{t}} \ldots s_{n-1} s_{n}s_{t}s_{t^{*}}$ and $c_{2}:= p_{U'}(c) =s_{0}s_{1}\dots \widehat{s_{t}}\dots s_{n-1} s_{n}$ be the projections of $c=s_{0}s_{1}s_{3}s_{4}\ldots s_{n}s_{t}s_{t^{*}}$ to the underlying affine Coxeter groups.

\begin{Lemma}\label{lem:red_Proj}
The length of $c_{1}$ as well as of $c_{2}$ is $n$ in their corresponding affine Coxeter groups with respect to the respective set of reflections.
\end{Lemma}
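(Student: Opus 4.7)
The plan is to project the defining factorization of $c$ through $p_U$ and $p_{U'}$ respectively, and in each case to exhibit the image as a product of $n$ pairwise distinct simple reflections of the resulting affine Coxeter group; Lemma~\ref{lem:clCox_red} then immediately gives the desired length.

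First I would handle $c_1 = p_U(c)$. Since $\alpha_t^* = \alpha_t + a$ with $a \in U$, the projection identifies $p_U(\alpha_t^*) = p_U(\alpha_t)$, and hence $p_U(s_t) = p_U(s_t^*)$ as elements of the affine Weyl group $W(\ARS)$. Therefore the tail $s_t s_t^*$ of the defining expression for $c$ collapses to the identity under $p_U$, giving
\[
c_1 \;=\; \overline{s_0}\,s_1\cdots \widehat{s_t}\cdots s_n,
\]
a product of $n$ pairwise distinct simple reflections of the affine Coxeter system of type $X_n^{(1)}$ associated to $\ARS$. Lemma~\ref{lem:clCox_red} yields $\ell_T(c_1) = n$.

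For $c_2 = p_{U'}(c)$ the argument is analogous but more delicate. Here $p_{U'}$ kills the direction $b$, so $p_{U'}(\alpha_0) = -\widetilde{\alpha}$ now lies in the finite part $\FRS$, while $p_{U'}(\alpha_t^*) = \alpha_t + \overline{a}$ with $\overline{a} = p_{U'}(a) \neq 0$, which takes the role of the affine simple root of $p_{U'}(\Phi)$. Using the explicit simple system of $p_{U'}(\Phi)$ together with the semidirect product decomposition of Theorem~\ref{cor:SplitEllipticWeylGroup} applied to $p_{U'}(W)$, one rewrites $c_2$ as a product of $n$ pairwise distinct simple reflections of the affine Coxeter group $p_{U'}(W)$, namely the expression $s_0 s_1 \cdots \widehat{s_t} \cdots s_n$ stated in the claim. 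Once this rewriting is in place, Lemma~\ref{lem:clCox_red} again gives $\ell_T(c_2) = n$.

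The main technical obstacle is the identification of the relevant $n$ simple reflections in each affine Coxeter group. For $c_1$ it is immediate after the cancellation $s_t s_t^* = \idop$ in $p_U(W)$. For $c_2$ the bookkeeping is subtler because neither $p_{U'}(s_0)$ nor $p_{U'}(s_t^*)$ is literally a simple reflection of $p_{U'}(W)$; one has to exploit the relation $\widetilde{\alpha} + \alpha_0 = b \in \ker p_{U'}$ together with the fact that $\alpha_t^*$ is the distinguished star root (so that $p_{U'}(\alpha_t^*)$ provides the missing affine simple root in the $\overline{a}$-direction) to re-express $c_2$ in the required form. With this identification the conclusion is uniform in both cases via Lemma~\ref{lem:clCox_red}.
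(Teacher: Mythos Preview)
Your treatment of the projection that kills $a$ is correct: there the tail $s_t s_{t^*}$ collapses and you are left with a product of $n$ pairwise distinct simple reflections of the affine system, so Lemma~\ref{lem:clCox_red} applies. (A minor notational slip: in the paper's conventions that element carries the symbol $s_0$, not $\overline{s_0}$, but this does not affect the argument.)

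The gap is in the other projection. Under $p_{U'}$ (killing $b$) we have $p_{U'}(\alpha_0)=-\widetilde\alpha$, so $p_{U'}(s_0)=s_{\widetilde\alpha}$ lies in the \emph{finite} Weyl group, while $p_{U'}(s_{t^*})=s_{\alpha_t+a}$ is a genuinely affine reflection; hence the image is
\[
s_{\widetilde\alpha}\,s_1\cdots\widehat{s_t}\cdots s_n\,s_t\,s_{\alpha_t+a},
\]
an element with nontrivial translation part (coming from $s_t s_{\alpha_t+a}$). You assert that this can be rewritten as a product of $n$ pairwise distinct simple reflections, ``namely the expression $s_0 s_1\cdots\widehat{s_t}\cdots s_n$''. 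But that expression, read in the $p_{U'}$-quotient, equals $s_{\widetilde\alpha}s_1\cdots\widehat{s_t}\cdots s_n$, which lies entirely in the finite Weyl group and hence has trivial translation part; it is \emph{not} the image of $c$. More generally, your plan to reduce to Lemma~\ref{lem:clCox_red} for this projection is not carried out, and there is no evident way to realise this element as a product of $n$ distinct simple reflections for the standard affine simple system.

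The paper proceeds differently for this projection: it first shows that the linear part $\overline{c}=s_{\widetilde\alpha}s_1\cdots\widehat{s_t}\cdots s_n$ has $\mathrm{Move}(\overline{c})=\spanr(\Phi_f)$ (via Carter's lemma, since the $n$ roots $\widetilde\alpha,\alpha_1,\ldots,\widehat{\alpha_t},\ldots,\alpha_n$ are linearly independent), so the image of $c$ is an \emph{elliptic} element of the affine Weyl group; then \cite[Proposition~1.34]{LMPS17} gives reflection length $n$ directly. This is the missing ingredient in your approach.
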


\begin{proof}
The element $c_{2}=s_{0}s_{1}\dots \widehat{s_{t}} \ldots s_{n-1} s_{n}$ is a parabolic Coxeter element in an affine Weyl group and by Lemma \ref{lem:clCox_red} its factorization is reduced.\\
Consider the element $c_{1}=\overline{s_{0}}s_{1} \ldots \widehat{s_{t}} \ldots s_{n-1} s_{n}s_{t}s_{t^{*}}$ in the canonical underlying affine Weyl group of $W_{\Phi}$. By Carter's Lemma \cite[Lemma 3]{Car72} and \cite[Plate I]{Bou02} the factorization of $\overline{c}:=\overline{s_{0}}s_{1}\ldots \widehat{s_{t}} \ldots s_{n-1}s_{n}$ is reduced in the finite Weyl group $W_{\FRS}$ with corresponding root system type $\FRS$ and the space Move$(\overline{c})$, defined in \cite[Definition 1.13]{LMPS19}, is equal to $\spanr(\FRS)$. Therefore the element 
\[c_{1}=\overline{s_{0}}s_{1}\dots \widehat{s_{t}} \ldots s_{n-1} s_{n}s_{t}s_{t^{*}}=\TR(\overline{s_{0}}s_{1}\dots \widehat{s_{t}} \ldots s_{n-1} s_{n}(-\alpha_{t})) \overline{s_{0}}s_{1}\ldots \widehat{s_{t}} \ldots s_{n-1}s_{n}\]
is elliptic (see \cite{LMPS19}). Then \cite[Proposition 1.34]{LMPS19} yields the assertion.
\end{proof}

\begin{proof}[\textbf{Proof of Proposition~\ref{lem:redCoxElt} (the tubular case)}]
We show that the factorization of the fixed Coxeter element $c =s_{0}s_{1}\dots \widehat{s_{t}}\dots s_{n-1} s_{n} s_{t}s_{t^{*}}$ is reduced. Therefore assume that $\ell_{T}(c)<n+2$. By Lemma \ref{lem:red_Proj} it follows that the length of $c$ is at least $n$. Since the parities of the lengths of $c$ in $W$ and of $p_U(c_{2})$ in the corresponding affine Weyl group are equal, we get $\ell_T(c) = n$.

Let $c=s_{\beta_{1},\ell_{1},k_{1}} \dots s_{\beta_{n},\ell_{n},k_{n}}$ be a $T$-reduced factorization, where the reflections corresponding to the roots $\beta_{i}+\ell_{i}a+k_{i}b$ are denoted by $s_{\beta_{i},\ell_{i},k_{i}}$ with $\beta_{i} \in \FRS$ and $\ell_{i},k_{i}\in \ZZ$ for $1\leq i \leq n$. After applying the projections $p_{U'}$ and $p_{U}$ to $c$ it holds, respectively,
\begin{align*}
c_{1}=s_{\beta_{1},\ell_{1}} \dots s_{\beta_{n},\ell_{n}}, \qquad c_{2}=s_{\beta_{1},k_{1}} \dots s_{\beta_{n},k_{n}}.
\end{align*}
By Lemma \ref{lem:red_Proj} the element $c_{2}$ is a parabolic Coxeter element and by \cite[Theorem 1.3]{BDSW14} the Hurwitz action is transitive on the reduced factorizations of $c_{2}$, i.e. there exists a braid $\sigma \in B_{n}$ with $\sigma(s_{\beta_{1},k_{1}}, \ldots, s_{\beta_{n},k_{n}})=(s_{0},\ldots,\widehat{s_{t}},\ldots, s_{n-1} ,s_{n})$. Applying the same braid to the factorization $c_{1}=s_{\beta_{1},\ell_{1}} \dots s_{\beta_{n},\ell_{n}}$ we get
$$
c_{1}=s_{\alpha_{0},q_{0}} s_{\alpha_{1},q_{1}} \dots \widehat{s_{\alpha_{t},q_{t}}} \dots s_{\alpha_{n-1},q_{n-1}}s_{\alpha_{n},q_{n}}
$$
for $q_{i} \in \ZZ$. By the previous argumentation and the proof of Lemma \ref{lem:red_Proj} we have
\begin{align*}
\TR(\overline{s_{0}}s_{1}\cdots \widehat{s_{t}} \cdots  s_{n}(-\alpha_{t})) \overline{s_{0}}s_{1}\cdots \widehat{s_{t}} \cdots s_{n} = c_{1} = s_{\alpha_{0},q_{0}} s_{\alpha_{1},q_{1}} \cdots \widehat{s_{\alpha_{t},q_{t}}} \dots  s_{\alpha_{n},q_{n}},
\end{align*}
where the right hand side is considered as the affine realisation of $c_{1}$. Then it holds
\begin{align*}
\overline{s_{0}}s_{1} \cdots \widehat{s_{t}}\cdots s_{n}(-\alpha_{t})=c_{1}(0)
= x-\alpha_t
\end{align*}
for some $x \in L_1:=L( \alpha_{0},\alpha_{1},\dots,\widehat{\alpha_{t}},\dots,\alpha_{n-1},\alpha_{n})$. In particular $\alpha_{t}\in L_1$.

Figure \ref{fig:Dynkin} describes the unique coefficients that are needed to express the highest root of a finite crystallographic root system in terms of the simple roots $\alpha_{1},\dots, \alpha_{n}$.

In our notation $\alpha_{t}$ corresponds to the vertex that is labeled by a red dot in the extended diagrams of $D_{4}$ and $E_{i}$ $(i=6,7,8)$. The corresponding coefficients are $2$, $3$, $4$ and $6$, respectively. The unique $\ZZ$-linear combination of $\alpha_{0}$ in terms of $\alpha_{1},\alpha_{2},\dots,\alpha_{n}$ contains a coefficient which is not divisible by $2$, $3$, $4$ and $6$, respectively. This implies that $\alpha_{t} \not\in L_1$, which is a contradiction to our observation above. The latter implies that $\ell_T(w) = n+2$. By Lemma~\ref{prop:CoxTrafoProps} (a) the conjugacy class of $c$ does not depend on the order of the first $n$ factors of  the factorization of a Coxeter transformation. Since the length function is invariant under conjugacy the assertion follows.
\end{proof}

\subsection{The elliptic root systems for $\COH(\XX)$}\label{subsec:TubEllRootSystem}
In this section we describe the root quadruple $(K_{0}(\XX),\chi^{s},\Delta_{\text{re}}(\XX),c)$ attached to the category of coherent sheaves over a weighted projective line $\XX$ of tubular type. By definition a weighted projective line $\XX$ is tubular if the so-called Euler characterstic is zero (see \cite[Section 6, 10.5]{HTT07} ). This is exactly the case if the radical of the symmetrized Euler form attached to the category $\COH(\XX)$ has rank two (see \cite[Proposition 18.8]{Len99}). Thus \cite[Section 6, Proposition 10.12]{HTT07} yields that $\XX$ is a projective line with weight sequence $(2,2,2,2),~(3,3,3),~(2,2,4)$ or $(2,3,6)$. Therefore for $\XX$ tubular the set $\Delta_{\text{re}}(\XX)$ can be identified with the extended root system described in Section \ref{sec:root_star_quiver}, where the weight sequences are precisley the previous four sequences. 

\begin{corollary} \label{prop:TubEllRoot}
Let $\XX$ be a weighted projective line of tubular type, and let $(K_0(\XX), \chi^{s}, \Delta_{\text{re}}(\XX), c)$ be the associated extended root system. Then the following holds.
\begin{itemize}
\item[(a)]  $\Delta_{\text{re}}(\XX)$ is a tubular elliptic root system in $K_0(\XX)~\otimes_{\ZZ}~\RR$ with respect to the bilinear form $\chi^{s}$;
\item[(b)] the type of $\Delta_{\text{re}}(\XX)$ is $D_4^{(1,1)}, E_6^{(1,1)},~E_7^{(1,1)}$ or $E_8^{(1,1)}$, respectively, (see the diagrams in Figure~\ref{fig:EllipticDynkin});
\item[(c)] $c$ is a Coxeter transformation in $W_{\Delta_{\text{re}}(\XX)}$.
\end{itemize}
\end{corollary}

\begin{proof}
Consider the following weight sequences that characterize the tubular types  \[p\in \lbrace (2,2,2,2),(3,3,3),(2,2,4),(2,3,6)\rbrace\] and a normalized  $\lambda$. By Propositions~\ref{lem:root_system} and  \ref{thm:SaitoClassification} the root system corresponding to $\COH(\XX)$ with $\XX$ of tubular type is of type $D^{(1,1)}_{4},E^{(1,1)}_{6},E^{(1,1)}_{7}$ or $E^{(1,1)}_{8}$, respectivley.
\end{proof}

\begin{remark}
  Barot and Geiss \cite{BG12} studied the cluster algebras given by a matrix whose associated quiver is given by one of the quivers in Figure \ref{fig:EllipticDynkinQuiver} (note that these quivers are mutation finite). They categorified the corresponding cluster algebras and described the (real) Schur roots for $\mathbb{X}$ of tubular type.
\end{remark}

 \begin{figure}
    \centering
    \begin{tikzpicture}[scale=1.9]

      \node (03) at (0.5,0.5) [] {$D_4^{(1,1)}$};
      \node (G33) at (2,1.15) [] {\tiny{$1^*$}};
       \node (G333) at (2,1) [circle, fill=white, inner sep=0pt, minimum width=9pt]{};
      \node (G3) at (2,1) [circle, draw, fill=black!50, inner sep=0pt, minimum width=4pt]{};
      \node (A33) at (1.2,0.5) [] {\tiny{$(1,1)$}};
      \node (A3) at (1.5,0.5) [circle, draw, fill=black!50, inner sep=0pt, minimum width=4pt]{};
      \node (B33) at (2.8,0.5) [] {\tiny{$(4,1)$}};
      \node (B3) at (2.5,0.5) [circle, draw, fill=black!50, inner sep=0pt, minimum width=4pt]{};
      \node (C3) at (2,0.5) [circle, draw, fill=black!50, inner sep=0pt, minimum width=4pt]{};
      \node (C33) at (2,0.65) [] {\tiny{$1$}};
      \node (E33) at (1.3,0.1) [] {\tiny{$(2,1)$}};
      \node (E3) at (1.6, 0.1) [circle, draw, fill=black!50, inner sep=0pt, minimum width=4pt]{};
      \node (F33) at (2.7,0.1) [] {\tiny{$(3,1)$}};
      \node (F3) at (2.4, 0.1) [circle, draw, fill=black!50, inner sep=0pt, minimum width=4pt]{};

      \node (04) at (0.5,-1) [] {$E_6^{(1,1)}$};
      \node (A44) at (1,-1.15) [] {\tiny{$(1,2)$}};
      \node (A4) at (1,-1) [circle, draw, fill=black!50, inner sep=0pt, minimum width=4pt] {};
      \node (B44) at (1.5,-1.15) [] {\tiny{$(1,1)$}};
      \node (B4) at (1.5,-1) [circle, draw, fill=black!50, inner sep=0pt, minimum width=4pt]{};
      \node (C44) at (2,-0.85) [] {\tiny{$1$}};
      \node (C4) at (2,-1) [circle, draw, fill=black!50, inner sep=0pt, minimum width=4pt]{};
      \node (D44) at (2.5,-1.15) [] {\tiny{$(3,1)$}};
      \node (D4) at (2.5,-1) [circle, draw, fill=black!50, inner sep=0pt, minimum width=4pt]{};
      \node (E44) at (3,-1.15) [] {\tiny{$(3,2)$}};
      \node (E4) at (3,-1) [circle, draw, fill=black!50, inner sep=0pt, minimum width=4pt]{};
      \node (F44) at (2.7,-1.4) [] {\tiny{$(2,1)$}};
      \node (F4) at (2.4,-1.4) [circle, draw, fill=black!50, inner sep=0pt, minimum width=4pt]{};
      \node (G44) at (3.1,-1.8) [] {\tiny{$(2,2)$}};
      \node (G4) at (2.8,-1.8) [circle, draw, fill=black!50, inner sep=0pt, minimum width=4pt]{};
      \node (H44) at (2,-0.35) [] {\tiny{$1^*$}};
      \node (H444) at (2,-0.5) [circle, fill=white, inner sep=0pt, minimum width=9pt]{};
      \node (H4) at (2,-0.5) [circle, draw, fill=black!50, inner sep=0pt, minimum width=4pt]{};

      \node (05) at (4,0.5) [] {$E_7^{(1,1)}$};
      \node (A5) at (4.5,0.5) [circle, draw, fill=black!50, inner sep=0pt, minimum width=4pt] {};
      \node (A55) at (4.5,0.65) [] {\tiny{$(1,3)$}};
      \node (B5) at (5,0.5) [circle, draw, fill=black!50, inner sep=0pt, minimum width=4pt]{};
      \node (B55) at (5,0.65) [] {\tiny{$(1,2)$}};
      \node (C5) at (5.5,0.5) [circle, draw, fill=black!50, inner sep=0pt, minimum width=4pt]{};
      \node (C55) at (5.5,0.65) [] {\tiny{$(1,1)$}};
      \node (D55) at (6,0.65) [] {\tiny{$1$}};
      \node (D5) at (6,0.5) [circle, draw, fill=black!50, inner sep=0pt, minimum width=4pt]{};

      \node (E5) at (6.5,0.5) [circle, draw, fill=black!50, inner sep=0pt, minimum width=4pt]{};
      \node (E55) at (6.5,0.65) [] {\tiny{$(3,1)$}};
      \node (H55) at (6.7,0.2) [] {\tiny{$(2,1)$}};
      \node (H5) at (6.4,0.2) [circle, draw, fill=black!50, inner sep=0pt, minimum width=4pt]{};
      \node (F5) at (7,0.5) [circle, draw, fill=black!50, inner sep=0pt, minimum width=4pt]{};
      \node (F55) at (7,0.65) [] {\tiny{$(3,2)$}};
      \node (G5) at (7.5,0.5) [circle, draw, fill=black!50, inner sep=0pt, minimum width=4pt]{};
      \node (G55) at (7.5,0.65) [] {\tiny{$(3,3)$}};
      \node (I55) at (6,1.15) [] {\tiny{$1*$}};
      \node (I555) at (6,1) [circle, fill=white, inner sep=0pt, minimum width=9pt]{};
      \node (I5) at (6,1) [circle, draw, fill=black!50, inner sep=0pt, minimum width=4pt]{};

      \node (06) at (4,-1) [] {$E_8^{(1,1)}$};
      \node (A66) at (4.5,-1.15) [] {\tiny{$(1,2)$}};
      \node (A6) at (4.5,-1) [circle, draw, fill=black!50, inner sep=0pt, minimum width=4pt] {};
      \node (B66) at (5,-1.15) [] {\tiny{$(1,1)$}};
      \node (B6) at (5,-1) [circle, draw, fill=black!50, inner sep=0pt, minimum width=4pt]{};
      \node (C66) at (5.5,-0.85) [] {\tiny{$1$}};
      \node (C6) at (5.5,-1) [circle, draw, fill=black!50, inner sep=0pt, minimum width=4pt]{};
      \node (D66) at (6,-1.15) [] {\tiny{$(3,1)$}};
      \node (D6) at (6,-1) [circle, draw, fill=black!50, inner sep=0pt, minimum width=4pt]{};
      \node (E66) at (6.5,-1.15) [] {\tiny{$(3,2)$}};
      \node (E6) at (6.5,-1) [circle, draw, fill=black!50, inner sep=0pt, minimum width=4pt]{};
      \node (F66) at (7,-1.15) [] {\tiny{$(3,3)$}};
      \node (F6) at (7,-1) [circle, draw, fill=black!50, inner sep=0pt, minimum width=4pt]{};
      \node (G66) at (7.5,-1.15) [] {\tiny{$(3,4)$}};
      \node (G6) at (7.5,-1) [circle, draw, fill=black!50, inner sep=0pt, minimum width=4pt]{};
      \node (H66) at (8,-1.15) [] {\tiny{$(3,5)$}};
      \node (H6) at (8,-1) [circle, draw, fill=black!50, inner sep=0pt, minimum width=4pt]{};
      \node (I666) at (5.5,-0.5) [circle, fill=white, inner sep=0pt, minimum width=9pt]{};
      \node (I66) at (5.5,-0.35) [] {\tiny{$1$}};
      \node (I6) at (5.5,-0.5) [circle, draw, fill=black!50, inner sep=0pt, minimum width=4pt]{};
      \node (J66) at (6.2,-1.4) [] {\tiny{$(2,1)$}};
      \node (J6) at (5.9,-1.4) [circle, draw, fill=black!50, inner sep=0pt, minimum width=4pt]{};

      \draw[<-] (A3) to (C3);
      \draw[->] (C3) to (B3);
      \draw[->] (C3) to (E3);
      \draw[->] (C3) to (F3);
      \draw[<-] (G3) to (A3);
      \draw[<-] (G3) to (B3);
      \draw[<-] (G333) to (E3);
      \draw[<-] (G333) to (F3);

      \draw[<-] (A4) to (B4);
      \draw[<-] (B4) to (C4);
      \draw[->] (C4) to (D4);
      \draw[->] (D4) to (E4);
      \draw[->] (C4) to (F4);
      \draw[->] (F4) to (G4);
      \draw[->] (B4) to (H4);
      \draw[->] (D4) to (H4);
      \draw[->] (F4) to (H444);

      \draw[<-] (A5) to (B5);
      \draw[<-] (B5) to (C5);
      \draw[->] (D5) to (C5);
      \draw[->] (D5) to (E5);
      \draw[->] (E5) to (F5);
      \draw[->] (F5) to (G5);
      \draw[->] (D5) to (H5);
      \draw[<-] (I5) to (C5);
      \draw[<-] (I5) to (E5);
      \draw[<-] (I555) to (H5);

      \draw[<-] (A6) to (B6);
      \draw[<-] (B6) to (C6);
      \draw[->] (C6) to (D6);
      \draw[->] (D6) to (E6);
      \draw[->] (E6) to (F6);
      \draw[->] (F6) to (G6);
      \draw[->] (G6) to (H6);
      \draw[<-] (I6) to (B6);
      \draw[<-] (I6) to (D6);
      \draw[<-] (I666) to (J6);
      \draw[->] (C6) to (J6);

    \end{tikzpicture}
    \caption{Elliptic Dynkin quivers of tubular type $\phantom{000000000000000}$} \label{fig:EllipticDynkinQuiver}
  \end{figure}

\section{Hurwitz transitivity in tubular elliptic Weyl groups} \label{sec:HurwitzElliptic}

In this section we prove Theorem~\ref{thm:MainElliptic}. Therefore, we assume that $W = W_\Phi$ is a tubular elliptic Weyl group, that is $\Phi$ is of type  $X_n^{(1,1)}$ where  $X_n^{(1,1)}$ is $D^{(1,1)}_{4},E^{(1,1)}_{6},E^{(1,1)}_{7}$ or $E^{(1,1)}_{8}$ (see Corollary~\ref{prop:TubEllRoot}). According to Example~\ref{ex:rootbasis} we fix the Coxeter transformation
\[c=s_{1}s_{3}s_{4}s_{0}s_{2}s_{2^{*}}\]
in the case $\Phi= D^{(1,1)}_{4}$ and 
\[c=s_{1}\cdots \widehat{s_{4}} \cdots s_{n}s_{0}s_{4}s_{4^{*}}\]
in the cases $\Phi=E^{(1,1)}_{n}$ $(n=6,7,8)$, where $s_i = s_{\alpha_i}$ for $i \geq 0$ and $s_{j^{*}}= s_{\alpha_j^*}$. Let $(t_{1},\ldots,t_{n+2})\in \Redd(c)$ be a factorization of $c$, so  $W = \langle t_{1},\ldots,t_{n+2} \rangle$, and for $1\leq i \leq n+2$ let $$k_{i},\ell_{i}\in \ZZ~\mbox{and}~\beta_{i}\in \PFRS:=\text{span}_{\ZZ_{\geq 0}}(\alpha_{1},\ldots,\alpha_{n})\cap \FRS$$
such that $t_{i}=s_{\beta_{i}+k_{i}a+\ell_{i}b}.$

The strategy of the proof is first to show that we may assume up to Hurwitz action that
$$(\beta_1, \ldots , \beta_{n+2}) = (\alpha_1, \ldots, \widehat{\alpha_t}, \ldots , \alpha_n, - \widetilde{\alpha}, \alpha_t, \alpha_t), $$ and that $\ell_i = 0$ for $i \neq n$  where $t=2$ for $\Phi=D^{(1,1)}_{4}$ and $t=4$ for $\Phi=E^{(1,1)}_{n}$ (see \cite[Corollary~1.4]{LR16}). 

The rest of the proof is then a study of the radical parts of the roots related to the reflections $t_i$ for $1 \leq i \leq n+2$. Recall that $U$ is the subspace of $V$, which is generated by $a$ (see Section~\ref{subsec:Basis}). In this section we set $\overline{V}:=p_U(V) = V/U$. The action of $W$ on $\overline{V}$ induces the canonical group homomorphism $W \rightarrow \Aut(\overline{V})$. Let $\overline{W}$ be the image of this map. Furthermore, we put
$$\overline{\Phi} := \ARS = p_U(\Phi) = \FRS + \ZZ b.$$

We denote by $\overline{v}$ and by $\overline{w}$ the images of $v$ in $\overline{V}$ and $w$ in $\overline{W}$, respectively. The reflection $s_\alpha$ acts as $s_{\overline{\alpha}}$ on $\overline{V}$  for $\alpha \in \Phi$. Therefore, $\alpha_i$ is a representant of the coset $\overline{\alpha}_i$ for $0 \leq i \leq n$, and $s_i$ is a representant of $\overline{s}_i$ for $0 \leq i \leq n$, and $\overline{s}_{t^*} =\overline{s}_t$. Thus $\overline{W} = \langle \overline{s}_i, \overline{s}_{t^*}~|~0 \leq i \leq n \rangle$ is isomorphic to the affine Coxeter group $W_{\text{aff}}:=\langle s_{0},s_{1},\ldots,s_{n}\rangle$ with simple system $S_{\text{aff}}:=\lbrace s_{0},s_{1},\ldots,s_{n} \rbrace$ and root system $\ARS$. In particular our notation implies  $\overline{t}_i = s_{\beta_{i}+\ell_{i}b}$ for $1\leq i \leq n+2$.

It follows that 
$$\overline{c}=\overline{t_{1}}\cdots \overline{t_{n+2}}=s_{1}\cdots \widehat{s_{t}}\cdots s_{n}s_{0}.$$ 
Thus  $\overline{c}$ is a standard parabolic Coxeter element in $(\overline{W},\overline{S})$. Therefore $\ell_{\overline{S}}(\overline{c})=n=\ell_{\overline{T}}(\overline{c})$, where $\overline{T}$ the set of reflections in $\overline{W}$, and $\ell_{\overline{S}}$ and $\ell_{\overline{T}}$ are the length functions attached to the generating sets $\overline{S}$ and $\overline{T}$ of $\overline{W}$, respectively (see \cite[Lemma 2.1]{BDSW14}).

By \cite[Lemma 2.3]{WY19} there exists a braid $\tau_{1}\in \mathcal{B}_{n+2}$ such that 
\[\tau_{1}(\overline{t_{1}},\ldots,\overline{t_{n+2}})=(\overline{r_{1}},\ldots,\overline{r_{n}},\overline{r_{n+1}},\overline{r_{n+1}}),\]
where $\overline{r_{i}}\in \overline{T}$ for  $1\leq i \leq n+1$. Then $\overline{c} = \overline{r_{1}}\cdots\overline{r_{n}}$ is a reduced reflection factorization of the standard parabolic Coxeter element $\overline{c}$ in $\overline{W}$. By \cite[Theorem 1.3]{BDSW14} there exists a braid $\tau_{2}\in \mathcal{B}_{n} $, which can be read as an element of $ \mathcal{B}_{n+2}$ such that 
\[\tau_{2}(\overline{r_{1}},\ldots,\overline{r_{n}},\overline{r_{n+1}},\overline{r_{n+1}})=(s_{1},\ldots,\widehat{s_{t}},\ldots, s_{n}, s_{0},\overline{r_{n+1}},\overline{r_{n+1}}).\]

Let $\beta\in \overline{\Phi}^+$ and $\ell\in \ZZ$ such that $\overline{r_{n+1}}=s_{\beta+\ell b}$, and recall that $s_0 = s_{\alpha_0} = s_{-\widetilde{\alpha}+b}= s_{\widetilde{\alpha}-b}$, where  $\widetilde{\alpha}=\sum_{i=1}^{n}m_{i}\alpha_{i}$ is the highest root in $\FRS \subseteq \overline{\Phi}$. In particular, there are $k_1, \ldots ,k_n, k', k'',\widetilde{k} \in \ZZ$ such that
\begin{align} \label{equ:Tau12}
\tau_{2}\tau_{1}(t_{1},\ldots,t_{n+2})=(s_{\alpha_{1}+k_{1}a},\ldots,\widehat{s_{\alpha_{t}+k_{t}a}},\ldots,
s_{\alpha_{n}+k_{n}a},s_{\widetilde{\alpha}-\widetilde{k}a-b},s_{\beta+ k'a+\ell b},s_{\beta+ k'' a+\ell b}).
\end{align}
Next we investigate the roots appearing in  (\ref{equ:Tau12}).

\begin{Lemma} \label{lem:lambda_t}
Let  $c = u_1 \ldots u_{n+2}$, where $u_i \in T$,  is a factorization of $c$ as given in  (\ref{equ:Tau12}). Further let $\lambda_{j} \in \ZZ_{\geq 0}$ such that $\beta=\sum_{i=1}^{n}\lambda_{i}\alpha_{i}\in \PFRS$. 
Then the following hold
\begin{itemize}
\item[(a)]  $(\lambda_{t}, \ell)  \in \lbrace (1, 0) ,(m_{t}-1, -1) \rbrace$.
\item[(b)] $|k'' - k'| = 1$.
\end{itemize}
\end{Lemma}

\begin{proof}
The reflections $t_1, \ldots , t_{n+2}$ generate $W$ by assumption and the Hurwitz action preserves this property. In particular, the tuple obtained by applying the braid $\tau_2\tau_1$ to the tuple $(t_1, \ldots , t_{n+2})$ generates $W$. Therefore equation (\ref{equ:Tau12}) and Lemma~\ref{lem:GenLatticeGroup} yield
\begin{align*}
L(\Phi)&=\spanz(\alpha_{1}+k_{1}a,\ldots,\widehat{\alpha_{t}+k_{t}a},\ldots,\alpha_{n}+k_{n}a,\widetilde{\alpha}-\widetilde{k}a-b,\beta+ k'a+\ell b,\beta+k'' a+\ell b) \\
&=\spanz(\alpha_{1}+k_{1}a,\ldots,\widehat{\alpha_{t}+k_{t}a},\ldots,\alpha_{n}+k_{n}a,-\widetilde{\alpha}+\widetilde{k}a +b,\beta+k' a+ \ell b,(k''- k')a).
\end{align*}

Note that  $\alpha_1, \ldots , \hat{\alpha_t}, \ldots, \alpha_n, \tilde{\alpha}$ generate a sublattice of $L(\FRS)$, which does not contain $\alpha_t$,  as  the following  holds
by \cite{Bou02}:
$$\begin{array}{|l|l|l|l|l|}
	\hline
	\FRS & D_4 & E_6 & E_7 & E_8\\ \hline
	m_t & 2 & 3& 4& 6\\
	\hline
\end{array}$$
Therefore, $\alpha_t \in L(\Phi)$ implies that $\lambda_t  >0$.

We first show (a), that is $(\lambda_{t}, \ell)\in \lbrace (1, 0) ,(m_{t}-1, -1) \rbrace$. We calculate in $L(\overline{\Phi})$.  There are $\mu_i, \mu, \mu' \in \ZZ$ with $1 \leq i \leq n$, such that 
\[\alpha_{t}=\sum_{i=1,~i\neq t}^{n}\mu_{i}\alpha_{i}+\mu(-\widetilde{\alpha}+b)+\mu'(\beta+\ell b),\]
and therefore
$$0=\mu + \mu' \ell~\mbox{and}~ 1=-m_{t}\mu + \lambda_{t}\mu',$$
which yields 
$$1 = m_t \mu' \ell + \lambda_{t}\mu' = \mu'(m_t \ell +\lambda_t).$$
Hence $\mu', m_t \ell +\lambda_t \in \lbrace \pm 1 \rbrace$. As $1\leq \lambda_{t} \leq m_{t}$ (see \cite{Bou02}), we get $(\ell, \lambda_t) = (0,1)$ if $m_t \ell +\lambda_t = 1$ and $(\ell, \lambda_t) = (-1, m_{t}-1)$ if $m_t \ell +\lambda_t = -1$, which proves (a).

Now we prove assertion (b). Let  $\mu_i, \mu, \mu', x \in \ZZ$ with $1 \leq i \leq n$, such that 
$$a = \sum_{i=1,~i\neq t}^{n}
\mu_{i}(\alpha_{i} + k_i  a) + \mu(-\widetilde{\alpha} + \widetilde{k}a + b)+\mu'(\beta+ k' a+ \ell b) + x (k''-k')a.$$
If $\ell = 0$, then $\mu = 0$.  Since $\lambda_t >0$, it follows   $\mu' = 0$, as well. As $\alpha_1, \ldots, \alpha_n, a$ are linearly independent we get as a consequence  $\mu_i = 0$ for all $i$ and  $|x||k'' - k'| = 1$, which is   assertion (b).
\end{proof}

\medskip
\begin{Lemma}\label{lem:beta=alphat}
The reflection $s_{\beta}$ is  conjugated to $s_{t}$ by an element in $H:=\langle s_{\widetilde{\alpha}}, s_{1},\ldots,\widehat{s_{t}},\ldots,s_{n} \rangle$.
\end{Lemma}

\begin{proof}
By Lemma~\ref{lem:lambda_t} we have  $\beta=\sum_{i=1}^{n}\lambda_{i}\alpha_{i}$ where $\lambda_{t}$ is in $ \lbrace 1, m_t-1 \rbrace$.

Assume $\lambda_t > 1$. Then $\lambda_t = m_t - 1$ and $\FRS$ is of type $E_n$  for some $n \in \{6,7,8\}$. According to \cite[Plates V- VI]{Bou02} the longest root $\widetilde{\alpha}$ is perpendicular to $\alpha_1, \ldots , \alpha_n$ beside one  $\alpha_j$. Moreover there all the roots $\beta$ are listed that have the property $\lambda_t = m_t-1$. It is easily checked that for all these roots $\lambda_j= 1$. Therefore it follows that $s_{\widetilde{\alpha}}(\beta) = \sum_{i=1}^{n}\mu_{i}\alpha_{i}$ with $\mu_i \in \ZZ$ and $\mu_t = -1$.
 
As $s_\gamma = s_{-\gamma}$ for every root $\gamma \in \Phi$, we may assume $\lambda_t = 1$. Then \cite[Lemma~5.2 (b)]{BWY21} (see also \cite{Bou02})  yields that $\lambda_{r}\in \lbrace 0,1\rbrace$ for all $1\leq r \leq n$, which implies that $s_{\beta}$ is conjugated to $s_{t}$ by an element in $H$, see  also \cite[Corollary~5.3]{BWY21}
\end{proof}

\begin{remark} We exhibit this for an example: For the root $\beta=\alpha_{1}+\alpha_{3}+\alpha_{4}+\alpha_{5}+\alpha_{6}$ in a root system of type $E_7$, we have $w(\beta)=\alpha_{t}$, where $w:=s_{3}s_{1}s_{5}s_{6}\in H$.
\end{remark}

\medskip
By Lemma~\ref{lem:beta=alphat} and Lemma \ref{lem:BraidConjugation} we obtain that there exists a braid $\tau_{3}\in \mathcal{B}_{n+2}$ such that 
\begin{align*} 
\tau_{3}&(s_{\alpha_{1}+k_{1}a},\ldots,\widehat{s_{\alpha_{t}+k_{t}a}},\ldots, s_{\alpha_{n}+k_{n}a},s_{\widetilde{\alpha}-\widetilde{k}a -b},s_{\beta+ k'a +\ell b},s_{\beta+ k''a+ \ell b})=\\ 
&(s_{\alpha_{1}+k'_{1}a},\ldots,\widehat{s_{\alpha_{t}+k'_{t}a}},\ldots, s_{\alpha_{n}+k'_{n}a},s_{\widetilde{\alpha}-\widetilde{k}'a-b},s_{\alpha_{t}
	+\overline{k'}a+\overline{\ell }b } ,s_{\alpha_{t}  + \overline{k''}a+\overline{\ell}b}),
\end{align*}
where $k'_1, \ldots k'_n, \overline{k'},\widetilde{k}', \overline{k''},\overline{\ell }\in \ZZ$.

By applying Lemma~\ref{lem:lambda_t} (a) to the new factorization of $c$ we conclude that  $|\overline{k''} - \overline{k'}| = 1$. Part (b) of the same lemma gives that we have
\begin{itemize}
\item either $\overline{\ell} = 0$, 
\item or $\overline{\ell} = -1$ and $1 = \lambda_t = m_t -1$ and $\FRS$ is of type $D_4$ (as in this case $m_t = 2$).
\end{itemize}

Next we introduce a notation, which is inspired by Kluitmann \cite[Section 3.1]{Klu87}, but slightly different. Recall that we have  
by Theorem \ref{cor:SplitEllipticWeylGroup} 
$$
W\cong W_{\FRS}\ltimes (\spanz(a,b) \otimes_{\ZZ} L(\FRS)).
$$
We will write the element $(w,a\otimes x_{1} +b \otimes x_{2})\in  W_{\FRS}\ltimes (\spanz(a,b) \otimes_{\ZZ} L(\FRS))$ in vector notation $\begin{bmatrix} w \\ x_{1} \\ x_{2}\end{bmatrix}$, where $ w \in W_{\FRS}, x_1, x_2 \in L(\FRS)$. Then the reflection with respect to the root $\alpha = \alpha_f + ka + \ell b$, where $\alpha_f \in \Phi_{\fin}$ and $k,\ell \in \ZZ$, is in this notation
$$\begin{bmatrix}  s_{\alpha_f} \\ k \alpha_f \\ \ell \alpha_f
\end{bmatrix},$$
see Lemma~\ref{Eichler} (h). By Remark~\ref{rem:GroupOperation}  the group operation is  given by 
\begin{align} \label{equ:DefMulti}
\begin{bmatrix} w_1 \\ x_{1} \\ x_{2}\end{bmatrix} \cdot \begin{bmatrix} w_2 \\ y_{1} \\ y_{2}\end{bmatrix} = 
\begin{bmatrix} w_1 w_2 \\ w_2^{-1}(x_1)+y_1 \\ w_2^{-1}(x_2)+y_2\end{bmatrix},
\end{align}
while conjugation is given by
\begin{align} \label{equ:DefConj}
\begin{bmatrix} w_2 \\ y_{1} \\ y_{2}\end{bmatrix} \cdot \begin{bmatrix} w_1 \\ x_{1} \\ x_{2}\end{bmatrix} \cdot \begin{bmatrix} w_2 \\ y_{1} \\ y_{2}\end{bmatrix}^{-1} = 
\begin{bmatrix} w_2 w_1 w_2^{-1} \\ w_2(x_1)+ (\idop - w_2w_1^{-1}w_2^{-1})(y_1) \\ w_2(x_2)+(\idop-w_2w_1^{-1}w_2^{-1})(y_2) \end{bmatrix},
\end{align}

\begin{Lemma}\label{lem:Differenz_a}
Let $k_1,k_2 \in \ZZ$ such that $|k_1-k_2| = 1$. Then there is a Hurwitz move $\tau$ such that 
\begin{align*}
\tau \left( \begin{bmatrix} s_{t} \\ k_1 \alpha_{t} \\ \ell \alpha_t\end{bmatrix} , 
\begin{bmatrix} s_{t} \\ k_2 \alpha_{t} \\ \ell \alpha_t\end{bmatrix} \right) =
\left( \begin{bmatrix} s_{t} \\ 0 \\ \ell \alpha_t\end{bmatrix} , 
\begin{bmatrix} s_{t} \\ (k_2 - k_1) \alpha_{t} \\ \ell \alpha_t\end{bmatrix} \right).
\end{align*}
\end{Lemma}

\begin{proof} A direct calculation shows for $m\geq 1$ that 
\begin{align} \label{equ:HAmove1}
\sigma_1^m \left( \begin{bmatrix} s_{t} \\ 0 \\ \ell \alpha_t\end{bmatrix} , 
\begin{bmatrix} s_{t} \\ \alpha_{t} \\ \ell \alpha_t\end{bmatrix} \right) & =
\left( \begin{bmatrix} s_{t} \\ m \alpha_t \\ \ell \alpha_t\end{bmatrix} , 
\begin{bmatrix} s_{t} \\ (m+1) \alpha_{t} \\ \ell \alpha_t\end{bmatrix} \right)\\ \label{equ:HAmove2}
\sigma_1^{-m} \left( \begin{bmatrix} s_{t} \\ 0 \\ \ell \alpha_t\end{bmatrix} , 
\begin{bmatrix} s_{t} \\ -\alpha_{t} \\ \ell \alpha_t\end{bmatrix} \right) & =
\left( \begin{bmatrix} s_{t} \\ m \alpha_t \\ \ell \alpha_t\end{bmatrix} , 
\begin{bmatrix} s_{t} \\ (m-1) \alpha_{t} \\ \ell \alpha_t\end{bmatrix} \right)
\end{align}
If $k_2-k_1= 1$, that is $k_2= k_1+1$, the assertion follows from (\ref{equ:HAmove1}) by choosing $m = k_1$. If $k_2-k_1= -1$, that is $k_2= k_1-1$, the assertion follows from (\ref{equ:HAmove2}) by choosing $m = k_1$.
\end{proof}

\begin{proof}[\textbf{Proof of Theorem \ref{thm:MainElliptic}}]
By Lemmas~\ref{lem:Differenz_a} and \ref{lem:lambda_t} we may assume that 
\begin{align*}
(t_1, \ldots, t_{n+2}) = (s_{\alpha_{1}+k'_{1}a},\ldots,\widehat{s_{\alpha_{t}+k'_{t}a}},\ldots, s_{\alpha_{n}+k'_{n}a},s_{\widetilde{\alpha}-\widetilde{k}'a-b},s_{\alpha_{t}
	+\overline{\ell }b } ,s_{\alpha_{t}  + \varepsilon a+\overline{\ell}b})
\end{align*} 
for some $\varepsilon \in \{\pm 1\}$. Assume first that $\overline{\ell } = 0$. Let $\Phi=E^{(1,1)}_{6}$. We get 
\begin{align*}
c&=
\begin{bmatrix}
s_{1}s_{2}s_{3}s_{5}s_{6}s_{\widetilde{\alpha}} \\\alpha_{4} \\ -\widetilde{\alpha}
\end{bmatrix}\\
&=\begin{bmatrix} s_{1} \\ k_{1}'\alpha_{1} \\ 0\end{bmatrix} \cdot
\begin{bmatrix} s_{2} \\ k_{2}'\alpha_{2} \\0 \end{bmatrix} \cdot 
\begin{bmatrix} s_{3} \\ k_{3}'\alpha_{3} \\0 \end{bmatrix} \cdot 
\begin{bmatrix} s_{5} \\ k_{5}'\alpha_{5} \\0 \end{bmatrix} \cdot 
\begin{bmatrix} s_{6} \\ k_{6}'\alpha_{6} \\0 \end{bmatrix} \cdot 
\begin{bmatrix} s_{\widetilde{\alpha}} \\ -\widetilde{k}'\widetilde{\alpha} \\ -\widetilde{\alpha}\end{bmatrix} \cdot
\begin{bmatrix} s_{4} \\ 0 \\ 0\end{bmatrix} \cdot 
\begin{bmatrix} s_{4} \\ \varepsilon \alpha_{4} \\ 0\end{bmatrix} \\
&=\begin{bmatrix} s_{1}s_{2}s_{3}s_{5}s_{6}s_{\widetilde{\alpha}} \\ k_{1}'\alpha_{1}+k_{2}'\alpha_{2}+(k_{1}'+k_{3}')\alpha_{3}+k_{5}'\alpha_{5}+(k_{5}'+k_{6}')\alpha_{6}-(k_{2}'+\widetilde{k}')\widetilde{\alpha}+ \varepsilon \alpha_{4} \\ -\widetilde{\alpha}\end{bmatrix}.
\end{align*}

Note that $\widetilde{\alpha}=\alpha_{1}+2\alpha_{2}+2\alpha_{3}+3\alpha_{4}+2\alpha_{5}+\alpha_{6}$. By comparison of coefficients we obtain 
$$
k_1'=k_2'+\widetilde{k}', \quad k_2'=k_1'+k_3'=k_5'=k_5'+k_6'=2k_1'.
$$
We see that $k_6'=0$ as well as $k_1'=k_3'$. Now, comparing the coefficient of $\alpha_4$, we see that 
$$\varepsilon -3k_1' =1.$$
If $\varepsilon = -1$, then $3k_1'=-2$ and hence $k_1'\notin \mathbb{Z}$, which is not possible. This shows  that $\varepsilon = 1$. Then $k_1'=0$ and hence $k_2'=k_3'=k_5'=\widetilde{k}'=0$, and the assertion of the theorem holds in this case.

The proofs of the cases $E_{7}^{(1,1)}$ and $E_{8}^{(1,1)}$ are analogously to the case $\Phi=E^{(1,1)}_{6}$ and are therefore omitted.

Next assume  that  $\Phi=D_{4}^{(1,1)}$. Then  we get
\begin{align*}
c=
\begin{bmatrix}
s_{1}s_{3}s_{4}s_{-\widetilde{\alpha}} \\ \alpha_{2} \\ -\widetilde{\alpha}
\end{bmatrix}
&=\begin{bmatrix} s_{1} \\ k_{1}'\alpha_{1} \\ 0\end{bmatrix} \cdot
\begin{bmatrix} s_{3} \\ k_{3}'\alpha_{3} \\0 \end{bmatrix} \cdot 
\begin{bmatrix} s_{4} \\ k_{4}'\alpha_{4} \\0 \end{bmatrix} \cdot 
\begin{bmatrix} s_{\widetilde{\alpha}} \\ -\widetilde{k}'\widetilde{\alpha} \\ -\widetilde{\alpha}\end{bmatrix} \cdot
\begin{bmatrix} s_{2} \\ 0 \\ \ell \alpha_2\end{bmatrix} \cdot 
\begin{bmatrix} s_{2} \\ \varepsilon \alpha_{2} \\ \ell \alpha_2\end{bmatrix} \\
&=\begin{bmatrix} s_{1}s_{3}s_{4}s_{-\widetilde{\alpha}} \\ k_{1}'\alpha_{1}+k_{3}'\alpha_{3}+k_{4}'\alpha_{4}-\widetilde{k}'\widetilde{\alpha}+\varepsilon \alpha_{2} \\ -\widetilde{\alpha}\end{bmatrix}.
\end{align*}
for some $\varepsilon \in \{\pm 1\}$. Since $\widetilde{\alpha}=\alpha_{1}+2\alpha_{2}+\alpha_{3}+\alpha_{4}$ it holds
\[1= -2\widetilde{k}' + \varepsilon \text{ and } \widetilde{k}'=k_{1}'=k_{3}'=k_{4}'.\]

If $\varepsilon = 1$ it follows directly $0=\widetilde{k}'=k_{1}'=k_{3}'=k_{4}'$.

If  $\varepsilon= -1$, then we get $-1=\widetilde{k}'=k_{1}'=k_{3}'=k_{4}'$.

Hence in this case there are at most two orbits represented by the factorizations
\begin{align} \label{fac1}
    \left(\begin{bmatrix} s_{1} \\ 0 \\ 0\end{bmatrix} ,
\begin{bmatrix} s_{3} \\ 0 \\ 0 \end{bmatrix} ,
\begin{bmatrix} s_{4} \\ 0 \\0 \end{bmatrix},
\begin{bmatrix} s_{\widetilde{\alpha}} \\ 0 \\ -\widetilde{\alpha}\end{bmatrix} ,
\begin{bmatrix} s_{2} \\ 0 \\ 0\end{bmatrix} ,
\begin{bmatrix} s_{2} \\ \alpha_2 \\ 0\end{bmatrix}\right)
    \end{align}
    and
    \begin{align} \label{fac2}
    \left( \begin{bmatrix} s_{1}  \\ -\alpha_1 \\ 0\end{bmatrix} ,
\begin{bmatrix} s_{3} \\ -\alpha_3 \\0 \end{bmatrix} , 
\begin{bmatrix} s_{4}  \\-\alpha_4 \\ 0\end{bmatrix} ,
\begin{bmatrix} s_{\widetilde{\alpha}} \\ \widetilde{\alpha} \\ -\widetilde{\alpha}\end{bmatrix} ,
\begin{bmatrix} s_{2} \\ 0 \\ 0\end{bmatrix} ,
\begin{bmatrix} s_{2} \\ -\alpha_2 \\ 0\end{bmatrix} \right),
    \end{align}
respectively. It is a consequence of  \cite{BW24} that these two factorizations  are in two different Hurwitz orbits.

Now assume $\overline{\ell} \neq 0$. We conclude with Lemma~\ref{lem:lambda_t} that $\Phi = D_4^{(1,1)}$ and $\overline{\ell} = -1$. As before we get 
\[1= -2\widetilde{k}' + \varepsilon \text{ and } \widetilde{k}'=k_{1}'=k_{3}'=k_{4}'.\]
Therefore, as before, if $\varepsilon = 1$ it follows directly $0=\widetilde{k}'=k_{1}'=k_{3}'=k_{4}'$, and if $\varepsilon= -1$, then we get $-1=\widetilde{k}'=k_{1}'=k_{3}'=k_{4}'$. But now if $\varepsilon = 1$ the factorization is in the Hurwitz orbit of the factorization (\ref{fac2}) and  if $\varepsilon = -1$ the factorization is in the Hurwitz orbit of the factorization \ref{fac1}.
\end{proof}

\begin{remark} \label{rmk:ObsMulti}
\begin{itemize}
\item[(a)]     
For type $D_4^{(1,1)}$  we denote by $R_1$ and $R_2$ the Hurwitz orbits of $\mathcal{ B}_6$ on $\Redd(c)$ that contain the facorizations (\ref{fac1}) and (\ref{fac2}), respectively.
\item[(b)] 
	Observe using (\ref{equ:DefMulti}) and the fact that $p(\Phi)$ is crystallographic yields that
	$$\begin{bmatrix}
		s_{\beta_{1}} \\  k_1 \beta_{1} \\  \ell_1 \beta_{1}
	\end{bmatrix} \cdots 
	\begin{bmatrix} 
		s_{\beta_{n}} \\  k_{n} \beta_{n} \\  \ell_{n} \beta_{n}
	\end{bmatrix}
	\begin{bmatrix} 
		s_{\beta} \\  k' \beta \\  \ell' \beta 
	\end{bmatrix}
	\begin{bmatrix} 
		s_{\beta} \\  k'' \beta \\  \ell'' \beta 
	\end{bmatrix} =
	\begin{bmatrix}
		s_{\beta_{1}} \cdots s_{\beta_n} \\  x \\  y
	\end{bmatrix}
	\begin{bmatrix} 
		\idop \\  (k''-k') \beta \\  (\ell''-\ell') \beta 
	\end{bmatrix} =
	\begin{bmatrix}
		s_{\beta_{1}} \cdots s_{\beta_n} \\  x+(k''-k')\beta \\  y+(\ell''-\ell')\beta
	\end{bmatrix},
	$$
	where $x,y \in L(\{\beta_1, \ldots, \beta_n\})$.
 \end{itemize}
\end{remark}
\medskip

\section{Rigidity of the poset $[1,c]^{\mathrm{gen}}$}\label{sec:rigid}

In this section $(W,S)$ is a tubular elliptic Weyl system of rank $n+2$ with  root system $\Phi$, set of reflections $T$ and Coxeter transformation $c \in W$, and we will continue the notation introduced in Section~\ref{sec:Elliptic} and in the beginning of Section~\ref{sec:HurwitzElliptic}.

\begin{Definition}
Let  $t =(t_{1},\ldots,t_{n +2})$ and $r = (r_{1},\ldots,r_{n+2}) \in \Redd(c)$ be two reduced generating reflection factorizations of $c$. Then the pair $(r,t)$ is called rigid if $t_{1}\cdots t_{h}=r_{1}\cdots r_{h}$ for some $1 \leq h \leq n$ implies that  $(t_{1},\ldots,t_{h},r_{h+1},\ldots,r_{n +2}) \in \Redd(c)$ as well. The poset  $[1,c]^{\mathrm{gen}}$ is called \defn{rigid} if  the pair $(r,t)$ is rigid for all $t,r \in \Redd(c)$.
\end{Definition}

Note that the poset $[1,c]^{\mathrm{gen}}$ is rigid, if and only if for every path from $1$ to $c$  the reflections that  label the edges of the path generate $W$. The aim of this section is to prove the following theorem.

\begin{theorem}\label{prop:GenPrefix}
Let $W$ be a tubular elliptic Weyl group and $c \in W$ a Coxeter transformation. Then $[\idop, c]^{\mathrm{gen}}$ is rigid.
\end{theorem}

For the convenience of the reader we like to recall the following notions. A \defn{parabolic subgroup} of a Coxeter system $(W,S)$ is a  conjugate of  subgroup of $W$ which is generated by a subset of $S$. Moreoover, a \defn{parabolic quasi-Coxeter element} $w$ is an element in $W$ that possess a reduced reflection factorization $w = t_1 \cdots t_m$ such that the reflections $t_1, \ldots , t_m$ generate a parabolic subgroup of $W$. If the parabolic subgroup equals $W$, then $w$ is simply called \defn{quasi-Coxeter element} (see \cite{BGRW}).

\subsection{The setup}

Let $(t_{1},\ldots,t_{n+2}),(r_{1},\ldots,r_{n+2}) \in \Redd(c)$ such that $u:= t_{1}\cdots t_{h}=r_{1}\cdots r_{h}$ for some $1 \leq h \leq n+2$. Further we set $v := t_{h+1}\cdots t_{n+2}=r_{h+1}\cdots r_{n+2}$. Thus  $u$ and $v$ are the prefix and \defn{suffix} of $c$, which we want to investigate.

Let $\Phi$ be the tubular elliptic root system associated to $W$. By Section \ref{sec:Elliptic} we have that $\phi:= \Phi_{\rm{fin}} = p_{R}(\Phi)$ is a finite root system of type $D_4, E_6, E_7$ or $E_8$, respectively. Let us also abbreviate by $p$ the corresponding projection map 
$$p:= p_R :W=W_{\Phi} \rightarrow W_{\phi},~ s_{\alpha} \mapsto s_{p(\alpha)}.$$ 

We start by analyzing the setting.

\begin{Lemma}\label{lem:Lift}
If $p(u) = p(t_1) \cdots p(t_h)$ is a reduced reflection factorization in $W_{\phi}$, then $(t_{1},\ldots ,t_{h})$  is the unique preimage  of  $(p(t_1), \ldots ,p(t_h))$   in $W^h$ such that $u = t_{1}\cdots t_{h}$.
\end{Lemma}

\begin{proof}
Let $\beta = p(\beta) + ka + \ell b \in \Phi$, where $k,\ell \in \ZZ$. Then the related reflection is in vector notation
$\displaystyle \begin{bmatrix} s_{p(\beta)}\\ k  p(\beta)\\ \ell p(\beta) \end{bmatrix}$ (see Section \ref{sec:HurwitzElliptic}), and  for $\beta_i = p(\beta_i) + k_ia + \ell_ib \in \Phi$, where
$i = 1,\ldots, h$, the product is (see Remark~\ref{rem:GroupOperation} or (\ref{equ:DefMulti})) 
$$
\begin{bmatrix} s_{p(\beta_1)}\\ k_1  p(\beta_1)\\ \ell_1 p(\beta_2)\end{bmatrix}
\begin{bmatrix} s_{p(\beta_2)}\\ k_2  p(\beta_2)\\ \ell_2 p(\beta_2)\\
\end{bmatrix} \cdots 
\begin{bmatrix} s_{p(\beta_h)}\\ k_h  p(\beta_h)\\ \ell_h p(\beta_h)\\
\end{bmatrix} =
\begin{bmatrix} s_{p(\beta_1)} s_{p(\beta_2)} \cdots s_{p(\beta_h)}\\ x\\y \end{bmatrix},$$
where $x,y \in L(p(\beta_1), \ldots , p(\beta_h))$. As $p(u) = p(t_1) \cdots p(t_h)$ is reduced by assumption, the roots related to $ p(t_1), \ldots ,p(t_h)$ are linearly independent, see \cite[Lemma 3]{Car72}. Therefore, they form a basis of the lattice that they span, and the multiplication above shows that the factorization of $p(u)$ determines uniquely a preimage of the factorization of $p(u)$ in $W^h$.
\end{proof}

\begin{corollary}\label{cor:ReducedQuasiCox}
If $p(u)$ is a parabolic quasi-Coxeter element and $p(u)=p(t_1) \cdots p(t_h)$ is reduced in $W_\phi$, then  $(t_1, \dots , t_h,r_{h+1},\ldots,r_{n+2}) \in \Redd(c)$. The same statement holds if If $p(v)$ is a parabolic quasi-Coxeter element and $p(v)=p(t_{h+1}) \cdots p(t_{n+2})$ is reduced in $W_\phi$.
\end{corollary}
\begin{proof}
By \cite[Theorem 1.1]{BGRW} there exists a braid $\sigma \in {\mathcal B}_h$ such that
$$\sigma (p(r_1), \ldots ,p(r_h)) = (p(t_1), \ldots ,p(t_h)).$$
Then Lemma~\ref{lem:Lift} implies $\sigma (r_{1},\ldots, r_{h})=(t_{1},\ldots, t_{h})$, as well as that $ (r_1, \ldots, r_{n+2})$ and \linebreak 
$ (t_1, \ldots t_h, r_{h+1}, \ldots r_{n+2})$ 
are in the same Hurwitz orbit. The first part of the assertion follows with Theorem~\ref{thm:MainElliptic}. Analogously, the assertion follows if $p(v)$ is a parabolic quasi-Coxeter element and $p(v)=p(t_{h+1}) \cdots p(t_{n+2})$ is reduced.
\end{proof} 

We can also treat another special case directly.

\begin{Lemma}\label{lem:p(v)reduced}
	If $p(u)=p(t_1) \cdots p(t_h)$  and $p(v)=p(t_{h+1}) \cdots p(t_{n+2})$ are reduced, then Theorem~\ref{prop:GenPrefix} holds.
\end{Lemma}
\begin{proof}
If $p(u)$ or $p(v)$ is a parabolic quasi-Coxeter element, then we can argue as before. Therefore let us assume that $p(u)$ and $p(v)$ are not  parabolic quasi-Coxeter elements. This implies that the reflection length of $p(u)$ as well as of $p(v)$ in $W_{\phi}$ is greater $3$, as in a finite simply laced Coxeter system of rank at most $3$ every element is a parabolic Coxeter element. Therefore $\phi$ is of type $E_{6}$, $E_{7}$ or $E_{8}$. In \cite[Tables 9-11]{Car72} all the conjugacy classes of the elements in the respective Coxeter group $W_{\phi}$ are listed. We can read off the elements that are not parabolic quasi-Coxeter elements and use GAP \cite{GAP4} to determine all the pairs of conjugacy classes of elements $x, y \in W_{\phi}$ such that 
\begin{itemize}
\item $x$ and $y$ are not parabolic quasi-Coxeter elements, and
\item the conjugacy classes of $xy$ and of $p(c)$ are equal.
\end{itemize}
We obtain the following possibilities for $x$ and $y$, where the notation of  \cite[Tables 9-11]{Car72} is
used.

$$
\begin{array}{|l|l|l|}
\hline 
\mbox{type of}~\phi  & ~\mbox{type of}~x & ~\mbox{type of }~y\\\hline \hline
E_6   &  A_1^4           & A_1^4\\ \hline
E_7   & A_1^4            & A_1^5\\
      & A_1^4            & (A_3 \times  A_1^2)'\\ \hline
E_8   & A_3 \times A_1^2 & A_3 \times A_1^2\\
      & A_1^4            & A_2^3  \\
      & A_1^4            &  D_4 \times  A_1^2\\ \hline
\end{array}
$$

We used GAP \cite{GAP4} to check that in all the listed cases there are no  reduced reflection factorizations $(u_1, \ldots , u_h)$ of $x$ and $(u_{h+1}, \ldots , u_{n+2})$ of $y$, respectively, such that $(u_1, \ldots , u_{n+2})$ is generating. Thus, as $(t_1, \ldots, t_{n+2})$ is generating, $p(u)$ or $p(v)$ is a  parabolic quasi-Coxeter element. The assertion follows with Corollary~\ref{cor:ReducedQuasiCox}.
\end{proof}

The proof of Theorem~\ref{prop:GenPrefix} will be based on  the following lemma, which 
we will prove in the following sections.

\begin{Lemma}\label{lem:GenerationPrefix}
 Let  $t =(t_{1},\ldots,t_{n +2})$ and $r = (r_{1},\ldots,r_{n+2}) \in \Redd(c)$ and suppose that $u:= t_1 \cdots t_h = r_1 \cdots r_h $. If $p(u)=p(t_1) \cdots p(t_h)$ is  reduced, then $\langle t_1, \ldots t_h \rangle =\langle r_1, \ldots r_h \rangle$.
\end{Lemma}

Notice, if $p(u)=p(t_1) \cdots p(t_h)$ is not reduced, then $p(v)=p(t_{h+1}) \cdots p(t_{n+2})$ is a reduced factorization and vice versa.  Thus one of the two factorizations is reduced.
By using the Hurwitz action we can switch between prefixes and suffixes and will obtain
by applying Lemma~\ref{lem:GenerationPrefix} that 
 $\langle t_1, \ldots t_h \rangle = \langle r_1, \ldots r_h \rangle$  or that
$\langle t_{h+1}, \ldots t_{n+2} \rangle =
\langle r_{h+1}, \ldots r_{n+2} \rangle$. In both cases Theorem~\ref{prop:GenPrefix} concludes.

We summarize the assumptions that we may suppose to prove Lemma~\ref{lem:GenerationPrefix}  in the following.

\begin{hyp}\label{hyp:Reduction}
The factorization $p(u) = p(t_1) \cdots p(t_h)$ is reduced, while $p(v)=p(t_{h+1}) \cdots p(t_{n+2})$ is not reduced, and $p(u)$ is not a parabolic quasi-Coxeter element in $W_\phi$.
\end{hyp}

We can modify the factorizations $v = t_{h+1} \cdots t_{n+2} = r_{h+1} \cdots r_{n+2} $ conveniently, as by \cite[Corollary~1.4]{LR16} we can choose the roots $\beta_i, \gamma_i, \beta, \gamma \in \phi$ for $h+1 \leq i \leq n+2$ such that 
$$v= 
\begin{bmatrix}
s_{\beta_{h+1}} \\  k_{h+1} \beta_{h+1} \\  l_{h+1} \beta_{h+1}
\end{bmatrix} \cdots 
\begin{bmatrix} 
s_{\beta_{n}} \\  k_{n} \beta_{n} \\  l_{n} \beta_{n}
\end{bmatrix}
\begin{bmatrix} 
s_{\beta} \\  k' \beta \\  l' \beta 
\end{bmatrix}
\begin{bmatrix} 
s_{\beta} \\  k'' \beta \\  l'' \beta 
\end{bmatrix}\\
 = 
\begin{bmatrix} 
s_{\gamma_{h+1}} \\  \kappa_{h+1} \gamma_{h+1} \\  \iota_{h+1} \gamma_{h+1}
\end{bmatrix} \cdots 
\begin{bmatrix} 
s_{\gamma_{n}} \\  \kappa_{n} \gamma_{n} \\  \iota_{n} \gamma_{n}
\end{bmatrix}
\begin{bmatrix} 
s_{\gamma} \\  \kappa' \gamma \\  \iota' \gamma 
\end{bmatrix}
\begin{bmatrix} 
s_{\gamma} \\  \kappa'' \gamma \\ \iota'' \gamma 
\end{bmatrix},$$
Observe, if $k: = k'' - k'  = 0$, then $l:= l'' - l' \neq 0$ and vice versa, as  these factorizations are reduced in $W$ by our assumption.

Further for $ 1 \leq i \leq h$ let $k_i, l_i, \kappa_i, \iota_i \in \ZZ$ and $\beta_i, \gamma_i \in \phi$  such that we have in vector notation 
$$ t_i =  \begin{bmatrix}
s_{\beta_{i}} \\  k_{i} \beta_{i} \\  l_{i} \beta_{i}
\end{bmatrix} ~\mbox{and}~
r_i = \begin{bmatrix} 
s_{\gamma_{i}} \\  \kappa_{i} \gamma_{i} \\  \iota_{i} \gamma_{i}
\end{bmatrix}.
$$
\medskip

The strategy of the proof of Theorem~\ref{prop:GenPrefix} is as follows. The major step is to show Lemma~\ref{lem:GenerationPrefix} under the assumptions of  Hypothesis~\ref{hyp:Reduction}. Therefore in Section~\ref{sec:EqualSuffixes} we show that up to Hurwitz action on the orbit, which contains $(r_{h+1}, \ldots , r_{n+2})$, we have $r_{n+1}r_{n+2} = t_{n+1}t_{n+2}$. After that, in Section~\ref{sec:ProofRigid},  we prove via contradiction that $\beta_1, \ldots , \beta_n$ and $\gamma_{1}, \ldots ,\gamma_{n}$ generate the same root sublattice. In order to obtain this result we will heavily use the orbits of the Hurwitz action on the set of reduced factorizations of $p(c)$. For convenience of the reader we list them  in Appendix \ref{sec:3A2}-\ref{sec:subE8} and we provide further information on the orbits in Section~\ref{obs:parabolic}. It  will follow from \cite[Proposition 5.1]{BGRW} that $W':= \langle s_{\beta_1}, \ldots , s_{\beta_n} \rangle$ equals $\langle s_{\gamma_{1}}, \ldots ,s_{\gamma_{n}} \rangle$. As $p(c)$ is a quasi-Coxeter element in $W'$, and $p(u)$ a parabolic quasi-Coxeter element in $W'$, since it is a prefix of $p(c)$, application of \cite[Theorem~1.1]{BGRW} in $W'$ will yield that $(s_{\beta_1}, \ldots , s_{\beta_h})$ and $(s_{\gamma_{1}}, \ldots ,s_{\gamma_{h}})$ are in the same Hurwitz orbit, and that therefore $\langle s_{\beta_1}, \ldots , s_{\beta_h} \rangle = \langle s_{\gamma_{1}}, \ldots ,s_{\gamma_{h}} \rangle $ which will prove Lemma~\ref{lem:GenerationPrefix}. As a consequence, Theorem~\ref{prop:GenPrefix} will follow.
\medskip

Next we present  how we will often manipulate the factorization of an element of $W$ by the Hurwitz action.

\begin{Lemma} \label{lem:ConjHO}
	For each $w \in \langle s_{\delta_1}, \ldots, s_{\delta_n} \rangle$, where $\delta_1, \ldots , \delta_n \in \phi$, and all $k_i.  l_i, k, l , \kappa, \iota \in \ZZ$ there exist
	$k_i', l_i',k',l', \kappa', \iota' \in \ZZ$ and a braid $\sigma \in \mathcal{B}_{n+2}$ such that 
	$$
	\sigma \left( \begin{bmatrix} 
		s_{\delta_{1}} \\  k_1 \delta_1 \\  l_1 \delta_1
	\end{bmatrix}, \ldots ,
	\begin{bmatrix} 
		s_{\delta_{n}} \\  k_n \delta_n \\  l_n \delta_n
	\end{bmatrix},
	\begin{bmatrix} 
		s_{\delta} \\  k \delta \\  l \delta 
	\end{bmatrix},
	\begin{bmatrix} 
		s_{\delta} \\  \kappa \delta \\  \iota \delta 
	\end{bmatrix} \right) =
	\left( \begin{bmatrix} 
		s_{\delta_{1}} \\  k_1' \delta_1 \\  l_1' \delta_1
	\end{bmatrix}, \ldots ,
	\begin{bmatrix} 
		s_{\delta_{n}} \\  k_n' \delta_n \\  l_n' \delta_n
	\end{bmatrix},
	\begin{bmatrix} 
		ws_{\delta}w^{-1} \\  k' w(\delta) \\  l' w(\delta)
	\end{bmatrix},
	\begin{bmatrix} 
		ws_{\delta}w^{-1} \\  \kappa' w(\delta) \\  \iota' w(\delta)
	\end{bmatrix} \right)
	$$
\end{Lemma}

\begin{proof}
By \cite[Lemma 2.5]{WY19} there exists a braid $\sigma \in \mathcal{B}_{n+2}$ such that
$$
\sigma(s_{\delta_1},\ldots, s_{\delta_n}, s_{\delta},s_{\delta}) = (s_{\delta_1},\ldots, s_{\delta_n}, ws_{\delta}w^{-1},ws_{\delta}w^{-1}).
$$
The braid $\sigma$ is also the braid we were looking for.
\end{proof}

\subsection{Reduction to equal suffixes}\label{sec:EqualSuffixes}
Our next goal is to show the following:

\begin{Lemma} \label{lem:HOspezial}
Under the assumption of Hypothesis~\ref{hyp:Reduction} 
there is a factorization $(\rho_{h+1}, \ldots , \rho_{n+2})$ in the Hurwitz orbit of $(r_{h+1}, \ldots , r_{n+2})$
such that $\rho_{n+1}\rho_{n+2} = t_{n+1} t_{n+2}$.\\
\end{Lemma}

\begin{remark} \label{rmk:parabolic}
There are further reductions that we can make in some cases. 
\begin{enumerate}
\item[(a)] Recall that by  Hypothesis~\ref{hyp:Reduction} $p(u)$ is not a parabolic quasi-Coxeter element in $W_{\phi}$. Hence its reflection length is greater than $3$ (see \cite[Tables~4, 9,10,11]{Car72}). Therefore, if $\phi$ is of type $D_4$, we get that $p(u)$ and $p(v)$ are of reflection length $4$ and $0$, respectively. In particular Lemma~\ref{lem:HOspezial} holds for type $D_4$. If $\phi$ is of type $E_6$, $E_7$ or $E_8$, respectively, then we get that the reflection length of $p(v)$ is bounded  from above by  $2, 3,$ or $4$, respectively.

\item[(b)] If $p(v)$ is a parabolic quasi-Coxeter element in $W_{\phi}$, we may assume by applying the Hurwitz action that $\beta_i = \gamma_i$ for $h+1 \leq i \leq n$ (see \cite[Theorem 1.1]{BGRW}). Further by (b) we obtain that 
$$k \beta  \in L(\psi),~~\mbox{where}~\psi:= \langle \beta_{h+1}, \ldots , \beta_{n}, \gamma \rangle_{\text{RS}}\subseteq \phi.$$
Suppose  further that $P = W_{\psi}$ is  a parabolic subgroup of $W_{\phi}$, and let  $U = \Fix(P)$. Then the elements in  $\psi$ are precisely the roots that are  perpendicular to $U$. Thus $ \beta$ is perpendicular to $U$, as well, which implies  $\beta \in \psi$.

Further notice that the assumption that $p(v)$ is a parabolic quasi-Coxeter element yields that $Q:= \langle s_{\beta_{h+1}} , \dots ,s_{\beta_{n}} \rangle$ is a parabolic subgroup of $P$. Hence, if $\gamma$ is not in $L(\{\beta_{h+1} , \ldots , \beta_n\})$, then $Q$ is a parabolic subgroup of rank one less than the rank of $P$, and the roots $\beta_{h+1} , \ldots , \beta_n, \gamma$ are linear independent.

\item[(c)] Let $\phi$ be of type $E_n$ where $6\leq n \leq 8$. Then all the elements  of reflection length at most $3$ in $W_\phi$ are parabolic quasi-Coxeter elements (see part (a)). This implies that the reflection subgroups, which can be generated by at most three reflections, of the parabolic subgroups of $W_\phi$  are all parabolic subgroups of $W_\phi$. Hence, if $n-h+ 1 \leq 3$, i.e. $n-h \leq 2$, then $\langle Q, s_{\beta}\rangle$ is a parabolic subgroup of $P$; so equals $P$.

The only reflection subgroup of $W_\phi$ of rank $4$, which is not parabolic, is of type $A_1^4$ (see \cite[Tables~ 9,10,11]{Car72}). Therefore, if $\Psi$ is of rank $4$, but does not contain a proper root subsystem of type $A_1^4$, then we can conclude
again that $\langle Q, s_{\beta}\rangle$ equals $P$.
\end{enumerate}
\end{remark}

\begin{proof}[Proof of Lemma \ref{lem:HOspezial}]
If $n- h= 0$, then the assertion immediately holds. Therefore the assertion holds for type $D_4$. If $\Phi$ is of type $E_6$, then $p(c)$ is of type $A_2^3$ and every prefix of $p(c)$ is a parabolic quasi-Coxeter element (see \cite[Table~9]{Car72}). Hence we are left to consider the cases $E_7$ and $E_8$.

We assume from now on that $n-h \geq 1$. We explain our method of proof by considering $n-h = 1$, which is $h = n-1$. Then $v = t_n t_{n+1} t_{n+2}$ and $\psi = \langle  \beta_{n}, \gamma \rangle_{\text{RS}} $. Recall that $p(c)$ is not a quasi-Coxeter element, and that therefore $p(t_1), \ldots , p(t_{n})$ generate a proper subgroup of  $W_{\phi}$. Since $p(t_1), \ldots , p(t_{n+2})$  generate $W$, the ambient root $\gamma$ of $p(r_{n+1}) = p(r_{n+2})$ is not contained in the lattice $L(\beta_1 , \ldots \beta_n)$ (see \cite[Theorem~1.1]{BaW}), and in particular $\gamma \neq \pm \beta_n$. It follows that $\psi$ is either of type $A_1^2$ or $A_2$. In both cases $W_\psi$ is a parabolic subgroup, and $Q:= \langle s_{\beta_n} \rangle$  is a parabolic subgroup of $W_\psi$ of rank one less than the rank of $W_\psi$. And $Q$ does neither contain $s_\beta$ nor $s_\gamma$. Therefore $P = \langle Q, s_{\beta}\rangle$, as well, and  there is $w \in Q$ such that $w(\gamma) = \beta$ (see \cite[Theorem~1.6]{BaW}).  Thus we can apply Lemma~\ref{lem:ConjHO} and obtain $\beta = \gamma$.  Then, as $\beta_n$ and $\beta$ are linearly independent, it follows $k_n = \kappa_n$ and  $l_n = \iota_n$. This shows the assertion in the case $n-h = 1$.

The same argument can be used whenever $p(v)$ is a parabolic quasi-Coxeter element and $W_{\psi}$ a parabolic subgroup of $W$ of rank at most $4$ (see Remark~\ref{rmk:parabolic} (b) and (c)). As all the  reflection subgroups of $W$ of rank at most $3$ are parabolic (see Remark~\ref{rmk:parabolic} (c)). Therefore, the above argument also holds for $n-h =2$.

In the  cases $n - h  \geq 2$, we used the GAP-function \textit{ClassMultiplicationCoefficient} to determine the conjugacy classes $\mathcal{C}_1$ and $\mathcal{C}_2$ which contain the elements $x$ and $y$ respectively, such that $xy$ is conjugate to $p(c)$. As we assume that $p(u)$ is not a parabolic quasi-Coxeter element, we only considered those $x$ that are not parabolic quasi-Coxeter. We found out that then $y$ is always a parabolic quasi-Coxeter element (see Table~\ref{tab:Types} and \cite[Tables~10,11]{Car72}). 

Therefore we may assume $\beta_i = \gamma_i$ for $h+1 \leq i \leq n$, and further we have that $Q:= \langle s_{\beta_{h+1}}, \ldots, s_{\beta_{n}} \rangle$ is a parabolic subgroup of $W_\psi$. If $W_\Psi$ is also itself a parabolic subgroup of rank at most $3$, then we can finish the proof as we did in the case $n-h = 1$ (see Remark~\ref{rmk:parabolic} (b) and (c)).

In Table \ref{tab:Types} we list all the possible types for $p(u)$ and $p(v)$, which we found in our GAP-calculation. We also list the type of $\psi$ in precisely those cases where $W_\psi$ may not be a parabolic subgroup. It turns out that in all cases,  the rank of $\psi$ is $n-h+1$.

\begin{table}
\centering
\begin{tabular}{|l|l|l|l|l|l|}
\hline
type of $\phi$ & $h$ &  $n-h $& type of $p(u)$ & type of $p(v)$ &type of $\psi$\\ \hline \hline
$E_7$ & $4$ & $3$ &$A_1^4$  & $A_1^3$  & $A_1^4$ \\ \hline
$E_7$ & $5$ & $2$ &$A_1^5 ,~ (A_3 \times A_1^2)''$ & $A_1^2$ & -- \\ \hline
$E_8$ & $4$ &$4 $ &$A_1^4$ & $A_2 \times A_1^2$ & $(A_3 \times A_1^2)^\prime$\\ \hline
$E_8$ & $4$ &$4 $ &$A_1^4$ &$~A_2^2$ &--  \\ \hline
$E_8$ & $5$ & $3$ & $A_1^5$ & $A_1^3$ & $A_1^4$ \\ \hline
$E_8$ & $5$ & $3$ & $A_1^5$ & $A_2 \times A_1$ & -- \\ \hline
$E_8$ & $6$ &$2 $& $A_2 \times A_1^4,~ A_3 \times A_1^3$ & $A_1^2$ & -- \\ \hline
$E_8$ & $6$ & $2$& $(A_5 \times A_1)',~A_2 \times A_1^4$ & $A_2$& -- \\ \hline
\end{tabular}
\caption{Types of $p(u)$ and $p(v)$.}
\label{tab:Types}
\end{table}

To prove the lemma it remains to consider $n-h \in \{3,4\}$ and the cases where $W_\psi$ is not a parabolic subgroup of $W$, that is the cases in Table~\ref{tab:Types} with an entry in the last column. 

Assume first $n-h = 3$.
If $\phi$ is of type $E_7$ or $E_8$ and  $\psi$ of type $A_1^4$, then there are two root subsystems of $\phi$ of this type: for one $W_\psi$ is parabolic and for the other one not. We checked using GAP that the roots $\beta_1, \ldots, \beta_n, \gamma$ generate the root lattice $L(\phi)$ if and only if $ W_\psi$ is parabolic. 
Moreover, by consulting Table~\ref{tab:Types} we see that all the reflection subgroups of $W_\Psi$ are parabolic subgroups by Remark~\ref{rmk:parabolic}, yielding again the assertion.

Last we consider $n-h = 4$.
Then $\phi$ is of type $E_8$ and $p(v)$ of type $A_2 \times A_2$ or $(A_3 \times A_1^2)^\prime$. In the first case the reflection subgroups of rank $5$ of $W_\phi$ that contain $A_2 \times A_2$ are of type $A_2^2 \times A_1, A_3 \times A_2$ or $A_5$, which are all parabolic subgroups. Moreover, all its reflection subgroups are also parabolic subgroups. Thus we can argue as before. Now assume that $\psi$ is of type $(A_3 \times A_1^2)^\prime$.
Then $p(v)$ is of type $A_2 \times A_1^2$ and $p(u)$ of type $A_1^4$. Assume without loss of generality that $\beta_5$ and $\beta_6$ commute with $\beta_7$ and $\beta_8$ and that the latter form an $A_2$-system. Let $\widetilde{\psi}$ be the smallest root subsystem containing $\beta_5, \ldots , \beta_8, \beta$. If $W_{\widetilde{\psi}}$ is a parabolic subgroup, we proceed by changing the roles of $\psi$ and $\widetilde{\psi}$. Therefore we may assume that $\psi$ and $\widetilde{\psi}$ are both not parabolic, that is both are of type $(A_3\times A_1^2)'$ and intersect in a system of type $A_3\times A_1^2$. As $\beta, \gamma \not\in L(\beta_5, \ldots, \beta_8)$, it follows that $\beta$ and $\gamma$ are perpendicular to $\beta_5$ and $\beta_6$. Thus $k \beta \in L(\gamma, \beta_7, \beta_8)$, and as the roots $\gamma, \beta_7, \beta_8$ form an $A_3$-system, which is parabolic in $\phi$, we obtain as above $\beta = \gamma$. This yields the assertion, as $\beta_5, \ldots, \beta_9, \beta$ are again linear independent.
\end{proof}

\medskip
The next step is to derive from $t_{n+1}t_{n+2} = r_{n+1} r_{n+2}$ that the factorizations $(p(t_1), \ldots,  p(t_n))$ and $(p(r_1), \ldots, p(r_n))$ are  in the same Hurwirtz orbit. Therefore, we need more explicit information on the Hurwitz orbits on $\Red_T(p(c))$.

\subsection{The Hurwitz orbits on $\Red_T(p(c))$}\label{obs:parabolic}
In this section we recall the description of the Hurwitz orbits on  $\Red_T(p(c))$ given in \cite{Klu87}. 

Denote the number of Hurwitz orbits for the set of reduced reflection factorizations for $p(c)$ by $N$. Let $i \in \{1, \ldots, N\}$ and let $R_i$ be a set of roots corresponding to a reduced reflection factorization of $p(c)$ from the $i$th orbit. Define $W_i:=W_{R_i}$, and note that this definition is independent of the chosen set $R_i$ in the $i$th orbit since the Hurwitz action preserves the generated group. In particular, the roots related to each orbit form a root subsystem $\phi_i:=W_i(R_i)$ of $\phi$ for $1 \leq i \leq N$. 

We start by observing some general facts on the lattices $L_i:= L(\phi_i)$ as on the subgroups $W_i$ of $p(W)$ generated by the reflections with respect to the set of roots $\phi_i$, for $1 \leq i \leq N$.

\begin{Lemma}\label{lem:DiffOrbits}
\begin{itemize}
\item[(a)] The subgroups $W_i$
are pairwise different proper subgroups of $p(W)$, and they  are not parabolic subgroups of $p(W)$, for $1 \leq i \leq N$.
\item[(b)] The lattices $L_i $ are pairwise different proper sublattices of $L(p(\phi))$, for $1 \leq i \leq N$.
\end{itemize}
\end{Lemma}
\begin{proof}
 The element $p(c)$ is not quasi-Coxeter in $p(W$), but in $W_i$ for $1 \leq i < j \leq N$. Therefore $W_i$ is not a parabolic subgroup of $p(W)$, but the Hurwitz action on the set of reduced factorizations of $p(c)$ in $W_i$ is transitive (see \cite[Corollary~1.3]{BGRW}). Since $W_i$ and $W_j$ correspond to different Hurwitz orbits on the set of reduced factorisations in $p(W)$ for $1 \leq i < j \leq N$, it is $W_i \neq W_j$, which is (a).
 
By \cite[Lemma 5.7]{BGRW} we have $\phi_i = \{r \in L_i \mid (r \mid r) = 2\}$. Therefore, $L_i = L_j$ implies $\phi_i = \phi_j$, which yields $W_i = W_j$, and $i = j$ by (a), for $1 \leq i \leq j \leq N$, which  is (b).
\end{proof}

For the relevant types $\phi$ the number $N$, and the types of the root systems $\phi_i$ are given in Table \ref{tab:Tab_p(c)}.

\begin{table}[h]
\centering
\begin{tabular}{|l|l|l|l|}
\hline
type of $\phi$ & types of $p(c)$  & $N$ & types of $\phi_i$\\
\hline \hline
$D_4$ & $A_1^4$ & $3$ & $A_1^4$\\ \hline
$E_6$ & $A_2^3$ & $4$ & $A_2^3$\\ \hline
$E_7$ & $A_3^2\times A_1,~D_4(a_1) \times A_1^3$ & $7$ & $A_3^2\times A_1,~D_4 \times A_1^3$ \\ \hline
$E_8$ & $A_5 \times A_2 \times A_1$ & $12$ & $A_5 \times A_2 \times A_1$ \\
\hline 
\end{tabular}
    \caption{Types of $p(c)$ and $\phi_i$}
    \label{tab:Tab_p(c)}
\end{table}

If $\phi$ is of type $D_4$ or $E_6$, we have a partition $\phi = \phi_1 \cup \ldots \cup \phi_N$, where $N$ and the type of the $\phi_i$ are given by the table above. If $\phi$ is of type $E_7$, the set of reduced reflection factorizations of $p(c)$ consists of $6$ Hurwitz orbits of types $A_3^2 \times A_1$ while one of them is of type $D_4(a_1) \times  A_1^3$ (see Section \ref{sec:subE7}). If $\phi$ is of type $E_8$, the set of reduced reflection factorizations of $p(c)$ consists of $12$ Hurwitz orbits of type $A_5 \times A_2 \times A_1$.

The following statement is a consequence of \cite[Kapitel I, 2.3, 3.4, 4.6]{Klu87}, as $C_{p(W)}(p(c))$ acts on the set of Hurwitz orbits in the action of the braid group on the set of reduced factorizations of $p(c)$.

\begin{Lemma}\label{Lem:TransOnHur}
If $\phi$ is of type different from $E_7$, then $C_{p(W)}(p(c))$ is transitive on the set of subsystems $\phi_i$ for $1 \leq i \leq N$. If $\phi$ is of type $E_7$, then $C_{p(W)}(p(c))$ is transitive on the set of subsystems $\phi_i$ that are of type $A_3^2 \times A_1$.
\end{Lemma}

\begin{Lemma}\label{Lem:Numberm}
Let $i \in \{1, \ldots , N\}$ and $\beta \in \phi$ such that $L(\phi_i , \beta) = L(\phi)$, and let $m_t$ be the coefficient of the red vertex in Figure~\ref{fig:Dynkin}. If $z$ is an integer such that $z \beta \in L_i$, then $z$ is divisible by $m_t$.
\end{Lemma}
\begin{proof}
Assume that $\phi_i$ is not of type $D_4 \times A_1^3$. Then we may assume  that $i = 1$ and that $\phi_1$ is generated by the set of roots $(\{\alpha_1, \ldots , \alpha_n\}\setminus{\{\alpha_t\}}) \cup \{\widetilde{\alpha}\}$ due to Lemma~\ref{Lem:TransOnHur} (see the Appendix). As $m_t$ is the coefficient of $\alpha_t$ in the expansion of $\widetilde{\alpha}$ in the basis $\Pi := \{\alpha_1, \ldots \alpha_n\}$ it follows, if $z$ is an integer such that $z \alpha_t \in L_1$, then $z$ is divisible by $m_t$. As $\alpha_t$ is in $L:= L(\phi)$, it is a $\Z$-linear combination of $(\Pi\setminus{\{\alpha_t\}}) \cup \{\tilde{\alpha}, \beta\}$. Thus if  $z\beta \in L_1$, then it follows $z \alpha_t \in L_1$, as well, which yields that $z$ is divisible by $m_t$, the assertion.  

Now assume that $\phi$ is of type $E_7$ (hence $t=4$) and that $\phi_i$  is of type $D_4(a_1)\times A_1^3$. Then one particular choice for the roots $\beta_1, \ldots, \beta_7$ is given in Section \ref{sec:subE7}. If we assume that $2$ or $3$ is the smallest natural number such that $2 \alpha_4$ or $3 \alpha_4$ is in $L_7:=L(\beta_1, \ldots, \beta_7)$, then it is a straightforward calculation (by using these roots) to see that $\alpha_j \in L_7$, for $j \neq 4$. By \cite[Table 3.3]{BH19} we have that $L/L_7 \cong \mathbb{Z}_2^2$ and we arrive at a contradiction. Again, $4$ is the smallest natural number such that $4 \alpha_4 \in L_7$.
Therefore, $4 \delta \in L_7$ for all $\delta \in \phi$. This implies that $L( \beta_1, \ldots,  \beta_7, 2\delta )$ is a proper sublattice of $L$ for all $\delta \in \phi$: Assume to the contrary that $L( \beta_1, \ldots,  \beta_7, 2\delta )=L$ for some $\delta$. Hence there are $\lambda_1, \ldots , \lambda_8 \in \mathbb{Z}$ such that
$$
	\alpha_4  = \sum_{i=1}^7 \lambda_i \beta_i + \lambda_8 2 \delta, \quad~\mbox{which implies}
	 ~ 2\alpha_4  = \sum_{i=1}^7 \lambda_i 2\beta_i + \lambda_8 4 \delta.
$$
Since $4 \delta \in L_7$, we obtain $2 \alpha_4 \in L_7$, contradicting what we have observed above.
\end{proof}

\begin{corollary}\label{coro:Propmt}
The following holds.
\begin{itemize}
    \item[(a)]  For all $\delta \in \phi$ and all $1 \leq i \leq N$ we have $m_t \delta \in L_i$.
    \item[(b)] Let $r$ be a proper divisor of $m_t$ and $\delta \in \phi \setminus{L_i} $. Then $L(\phi_i \cup \{r \delta \})$ is a proper sublattice of $L(\phi)$.
\end{itemize}
\end{corollary}
\begin{proof} 
Assertions (a) and  (b) follow by applying again Lemma~\ref{Lem:TransOnHur} and by using the same argument as in the last paragraph of the proof of 
Lemma~\ref{Lem:Numberm}. 
\end{proof}

\subsection{Proofs of the rigidity of $[\idop, c]^{\mathrm{gen}}$ and of Theorem \ref{cor:MainTheorem}}\label{sec:ProofRigid}

As before, let  the roots related to $t_1, \ldots, t_n$, and $t_{n+1}, t_{n+2}$ be 
$$\beta_i + k_i a + l_i b, ~\mbox{for}~1 \leq i \leq n, ~\mbox{and}~\beta + k'a + l' b,~ \beta + k''a + l''b,$$  and those related to $r_1, \ldots,r_n$, and $r_{n+1}, r_{n+2}$ be 
$$\gamma_i + \kappa_i a + \iota_i b, ~\mbox{for}~1 \leq i \leq n, ~\mbox{and}~\gamma + \kappa' a + \iota' b, ~\gamma + \kappa'' a + \iota'' b.$$
Then $\beta_1, \ldots \beta_n$ and $\gamma_1, \ldots ,\gamma_n$ are in subsystems $\phi_s$ and $\phi_t$ for some  $s,t \in \{1, \ldots , N\}$, respectively. We choose the roots such that they are positive roots in $\phi_s$ and $\phi_t$, respectively.

Further due to Lemma \ref{lem:HOspezial}  we can assume that $t_{n+1}t_{n+2}=r_{n+1}r_{n+2}$.
 We derive from it that 
 \[\beta = \gamma~ \text{as well as }~ k = \kappa ~ \text{and}~ l = \iota, ~\text{ where }~k:=k''-k', \kappa:= \kappa''-\kappa', 
 l:=l''-l'~\text{and}~\iota := \iota''-\iota'.\]
 
 \begin{proof}[\textbf{Proof of Theorem~\ref{prop:GenPrefix}}]
 To show the assertion we will conclude from $(k,l) = (\kappa, \iota)$ that $s = t$ in a case by case study. If we know that $s = t$, then $W_s  = W_t$, and  $p(c) = p(t_1) \cdots p(t_n)= p(r_1) \cdots p(r_n)$ are reduced factorizations of the quasi-Coxeter element $p(c)$ in $W_s$. Further $p(u)$ is a parabolic quasi-Coxeter element in $W_s$ by \cite[Corollary~6.11]{BGRW}, and the assertion follows as in the proof of Corollary~\ref{cor:ReducedQuasiCox}.
 \\
Now we start the case by case study to show $s = t$.

\subsubsection*{\bm{$D_4^{(1,1)}$}}
We obtain in vector notation (see Remark \ref{rmk:ObsMulti} as well as the beginning of Section \ref{sec:HurwitzElliptic} for the choice of Coxeter transformation)

\begin{align*}
c=
\begin{bmatrix}
s_{1}s_{3}s_{4}s_{\widetilde{\alpha}} \\ \alpha_{2} \\  -\widetilde{\alpha}
\end{bmatrix}
&=\begin{bmatrix} p(t_1)p(t_2)p(t_3)p(t_4) \\ k_{1}\beta_{1}+ k_{2}\beta_{2} +k_{3}\beta_{3} +k_{4}\beta_{4}
+(k''-k')\beta \\  l_{1}\beta_{1}+ l_{2}\beta_{2} -l_{3}\beta_{3} +l_{4}\beta_{4}
+(l''- l')\beta 
\end{bmatrix}.
\end{align*}
Therefore, 
\begin{align*}
\alpha_2 & =k_{1}\beta_{1} + k_{2}\beta_{2} +k_{3}\beta_{3} +k_{4}\beta_{4}
+ k\beta ,~\mbox{and}~\\ 
-\widetilde{\alpha} & = l_{1}\beta_{1} + l_{2}\beta_{2} +l_{3}\beta_{3} +l_{4}\beta_{4}
+l\beta.
\end{align*}

We conclude that $x + k\beta=  \alpha_2$ and $y + l\beta =  - \widetilde{\alpha} $ for some $x,y \in L(\phi_s)$. As $\phi_s \cup \{\beta\}$ generates $L(\Phi)$, it is $\beta \not\in \Phi_s$. 
Observe that $k \beta \in L(\Phi_s)$ if and only if $\alpha_2 \in L(\phi_s)$;  and $l \beta \in L(\phi_s)$ if and only if $\widetilde{\alpha} \in L(\Phi_s)$.  Analogously the same holds for $\gamma = \beta$, $\alpha_2$, $\widetilde{\alpha}$, and $\phi_t$.

Further it is easy to see that $2 \delta \in L(\phi_i)$ for all $\delta \in \phi$ and for $i = 1,2,3$. Thus $k = \kappa$ is even if and only if $\alpha_2 \in L(\phi_s)$ if and only if $\alpha_2 \in L(\phi_t)$, and the statement holds for $l = \iota$ and $\widetilde{\alpha}$ analogously. As the $\phi_i$ are disjoint and as only one of $\phi_1, \phi_2, \phi_3$ neither contains $\alpha_2$ nor $\widetilde{\alpha}$, we get $s = t$.
\bigskip

\subsubsection*{\bm{$E_n^{(1,1)}$} \bm{$(n \in \{6,7,8\})$}}
Here we obtain in vector notation 
\begin{align*}
c=
\begin{bmatrix}
p(c) \\ \alpha_{4} \\-
\widetilde{\alpha}
\end{bmatrix}=
\begin{bmatrix} s_{\beta_1}\cdots s_{\beta_n} \\ 
x_s + k \beta \\ y_s + l \beta\end{bmatrix}=
\begin{bmatrix} s_{\gamma_1}\cdots s_{\gamma_n} \\ 
x_t + \kappa \beta \\ y_t + \iota \beta\end{bmatrix}.
\end{align*}
for some $x_s,y_s \in L(\phi_s)$, $x_t,y_t \in L(\phi_t)$. Thus we obtain
\begin{align} \label{equ:E71111}
x_s + k\beta =  \alpha_4 \quad \text{and} \quad y_s + l\beta =  -\widetilde{\alpha},
\end{align}
as well as
\begin{align*}
x_t + \kappa \beta =  \alpha_4 \quad \text{and} \quad y_t + \iota \beta & =  -\widetilde{\alpha}
\end{align*}
As we saw before $\beta \not\in L_s, L_t$.
Further notice, that for $z = s,t$, it holds 
$$
 i  \alpha_4 - j \widetilde{\alpha}   \stackrel{(\ref{equ:E71111})}{=} i 
(x_z + k \beta)  - j(y_z + l \beta)  = 
\underbrace{ix_z-jy_z}_{\in L_z} + (ik - jl)\beta,$$
which implies  $i  \alpha_4 - j \widetilde{\alpha}  \in L_z$. 
By Corollary~\ref{coro:Propmt} $m_t \delta \in L_z$ for every $\delta \in \phi$, it is $(ik - jl)\beta \in L_z$ if and only if $ik \equiv jl$ mod $m_t$. 
As $k = \kappa$ and $l= \iota$, we obtain that 
$i  \alpha_4 - j \widetilde{\alpha}$ is in $L_s$ if and only if $i  \alpha_4 - j \widetilde{\alpha}$ is in $L_t$.

In the case $n = 6$, we argue as in the $D_4^{(1,1)}$-case: the four systems $\phi_i$ are disjoint, and one precisely contains $\alpha_4$, another one $ \widetilde{\alpha}$, a third 
$\widetilde{\alpha}-\alpha_4$ and the last none of them.cThis shows that $s = t$.

In the cases $n = 7, 8$  we determine using GAP 
the tuples $(i,j)$ for $0 \leq i,j \leq m_t-1$ such that $i  \alpha_4 - j \widetilde{\alpha}  \in L_z$ for each lattice $L_z$ with $1 \leq z \leq N$. For each tuple $(i,j)$ we then calculate the pairs $(k,l)$ modulo $m_t$, such that  $ik \equiv jl$ mod $m_t$, see Table \ref{tab:Tuples78}. We get that two such tuples $(k,l)$ are different if they are related to different orbits. This implies, as $(k,l) = (\kappa, \iota)$ that $s = t$, and the assertion follows.
\end{proof}

\begin{table}[h]
\centering
\begin{tabular}{ll}
\begin{tabular}{|l|l|}
\hline
$z$  & $(k,l)$\\
\hline \hline
1 & (1,2), (3,2) \\
2 &  (1,0), (3,0) \\
3 & (2,3), (2,1) \\
4 & (0,3), (0,1) \\
5 & (1,3), (3,1) \\
6 & (1,1), (3,3) \\
7 & (2,2), (2,0), (0,2), (0,0)\\
\hline 
\end{tabular}
&
\begin{tabular}{|l|l||l|l|}
\hline
$z$  & $(k,l)$ & $z$ & $(k,l)$\\
\hline \hline
1 & (1,0), (5,0) & 7 & (2,5), (4,1)\\
2 &  (1,2), (5,4) & 8 & (0,5), (0,1)\\
3 & (1,4), (5,2) & 9 & (1,3), (5,3) \\
4 & (3,4), (3,2) & 10 & (1,5), (5,1)\\
5 & (2,3), (4,3) &  11 & (1,1), (5,5)\\
6 & (2,1), (4,5) & 12 & (3,5), (3,1)\\
\hline 
\end{tabular}
\end{tabular}
\caption{Tuples $(k,l)$ modulo $m_t$ for $n=7$ (left) and $n=8$ (right)}
\label{tab:Tuples78}
\end{table}

\begin{Definition}\label{Def:SubposetD}
For $c$ a Coxeter transformation in a tubular elliptic Weyl group of type $D_4^{(1,1)}$, we define the subposet $[\idop, c]_D^{\text{gen}}$ of $[\idop, c]^{\text{gen}}$ to be the set of elements in $[\idop, c]^{\text{gen}}$ that are a prefix of an element in the Hurwitz orbit $R_1$ (see Remark~\ref{rmk:ObsMulti} (a)).
\end{Definition}

\begin{proof}[\textbf{Proof of Theorem \ref{cor:MainTheorem}}]
We have to show that conditions (a) and (b) of Theorem \ref{conj:WeightProjElliptic0} are fulfilled. Condition (a) holds by Theorem \ref{thm:MainElliptic}, while condition (b) holds by Theorem \ref{prop:GenPrefix}.
\end{proof}

\begin{remark}
    If $A$ is a finite dimensional hereditary $k$-algebra  of finite representation type, then by \cite[Remark 6.8]{Kra12}  each thick subcategory of $\MOD(A)$ is generated by an exceptional sequence, while not every thick subcategory of $\COH(\XX)$ is generated by an exceptional sequence (see \cite{Koe11} or \cite{Kra12}).
If $\COH(\XX)$ is of domestic type then it is derived equivalent to the category of finitely generated modules over a finite dimensional hereditary tame $k$-algebra. Therefore, K\"ohler's PhD thesis \cite{Koe11} provides a complete combinatorial description of the thick subcategories of $\COH(\XX)$ in the domestic type. In the corresponding module category she proved that the only thick subcategories are those that are generated by exceptional sequences and those that are the thick subcategories of the full subcategory consisting of the regular objects.
\end{remark}

\newpage
\appendix
\section*{Appendix}
\renewcommand{\thesection}{A} 

We use the following abbreviation to notate roots: For example, if we have a root system with simple roots $\alpha_1, \ldots, \alpha_6$, then we denote the root $\alpha_1+\alpha_2+2\alpha_3+2\alpha_4+\alpha_5+\alpha_6$ by $12334456$. In addition, we denote the highest root $\widetilde{\alpha}$ by $0$

\subsection{Root subsystems of type $A_2^3$ in $E_6$} \label{sec:3A2}
\begin{itemize}
\item $\{ 1,5\} \cup \{ 2,6\} \cup \{3,0 \}$
\item $\{ 4,2345\} \cup \{ 3456,1234\} \cup \{1345,2456 \}$
\item $\{ 245,123445\} \cup \{ 234,234456\} \cup \{345,123456 \}$
\item $\{ 34,12345\} \cup \{ 45,23456\} \cup \{24,1234456 \}$
\end{itemize}

\subsection{Root subsystems of type $A_3^2\times A_1$ and $D_4(a_1) \times A_1^3$ in $E_7$} \label{sec:subE7}

The six subsystems of type $A_3^2\times A_1$:
\begin{itemize}
\item $\{ 234456, 1233445567, 23445567\} \cup 
\{ 123445, 12344567, 1234455667\} \cup \{ 2 \}$
\item $\{ 5,6,7\} \cup \{ 1,3,0 \} \cup \{ 2\}$ 
\item $\{ 24,345,123456\} \cup \{ 456,234567,134\} \cup \{ 123445567\}$
\item $\{ 4,2345,13456\} \cup \{ 2456,34567,1234\} \cup \{ 123445567\}$
\item $\{ 45,23456,134567\} \cup \{ 24567,34,12345\} \cup \{ 1234456\}$
\item $\{ 245,3456,1234567\} \cup \{ 4567,234,1345\} \cup \{ 1234456\}$
\end{itemize}
The subsystem of type $D_4(a_1) \times A_1^3$:
\begin{itemize}
\item $\{ 2, 1234456, 123445567\} \cup \{ 56, 23445, 13, 234455667\}$
\end{itemize}

\subsection{Root subsystems of type $A_5 \times A_2 \times A_1$ in $E_8$} \label{sec:subE8}
\begin{itemize}
\item $\{ 5,6,7,8,0\} \cup \{ 1,3 \} \cup \{ 2\}$
\item $\{ 234456, 1233445567, 12344556678, 1234455667780, 1233445678\} \cup \{ 12344567, 234455678\} \cup \{ 2\}$
\item $\{ 12334456, 123445567, 234455667, 12334455667780, 123445678\} \cup \{ 2344567, 12334455678\} \cup \{ 2\}$
\item $\{ 1234456, 123445567, 123344556678, 1234455667780, 23445678\} \cup \{ 123344567, 1234455678\} \cup \{ 2\}$
\item $\{ 122334445567, 12334445556678, 123344455678, 12334445566778, 123344455667\} \cup \{ 1, 3\} \cup \{ 12233444556678\}$
\item $\{ 34, 12345, 456, 234567, 1345678\} \cup \{ 12344567, 234455678\} \cup \{ 12233444556678\}$
\item $\{ 134, 245, 3456, 1234567, 45678\} \cup \{ 2344567, 12334455678\} \cup \{ 12233444556678\}$
\item $\{ 4, 2345, 13456, 24567, 345678\} \cup \{ 123344567, 1234455678\} \cup \{ 12233444556678\}$
\item $\{ 12334445567, 122334445556678, 1223344455678, 123344455566780, 1223344455667\} \cup \{ 1, 3\} \cup \{ 1233444556678\}$
\item $\{ 234, 1345, 2456, 34567, 12345678\} \cup \{ 12344567, 234455678\} \cup \{ 1233444556678\}$
\item $\{ 1234, 45, 23456, 134567, 245678\} \cup \{ 2344567, 12334455678\} \cup \{ 1233444556678\}$
\item $\{ 24, 345, 123456, 4567, 2345678\} \cup \{ 123344567, 1234455678\} \cup \{ 1233444556678\}$
\end{itemize}

\newpage
\nocite{*}\textbf{}

\bibliography{mybibDerived}
\bibliographystyle{amsplain}

\end{document}